\title{Gaussian distribution of short sums of trace functions over finite fields}
\author{Corentin Perret-Gentil}
\address{ETH Zürich, Department of Mathematics}
\email{corentin.perretgentil@\{math.ethz.ch,gmail.com\}}
\date{\today}
\begin{document}

\begin{abstract}
  We show that under certain general conditions, short sums of $\ell$-adic trace functions over finite fields follow a normal distribution asymptotically when the origin varies, generalizing results of Erd\H{o}s-Davenport, Mak-Zaharescu and Lamzouri. In particular, this applies to exponential sums arising from Fourier transforms such as Kloosterman sums or Birch sums, as we can deduce from the works of Katz. By approximating the moments of traces of random matrices in monodromy groups, a quantitative version can be given as in Lamzouri's article, exhibiting a different phenomenon than the averaging from the central limit theorem.
\end{abstract}
\maketitle

\tableofcontents

\section{Introduction}

Let $\F_q$ denote the finite field of cardinality $q$ in characteristic $p$. For a function $t: \F_q\to\C$ and a subset $I\subset\F_q$, we let
\[S(t,I)=\sum_{x\in I} t(x)\]
be the partial sum over $I$. For $I$ of various structures and sizes, such sums are omnipresent in analytic number theory (see e.g. \cite[Chapter 12]{IK04}). Due to oscillations, they often exhibit cancellation, and as a general phenomenon we can expect (or wish for) \textit{square-root cancellation} $|S(t,I)|\ll \sqrt{|I|}$.
\subsection{Sums over subsets with varying origin}
For $x\in\F_q$, we denote by $I+x=\{y+x : y\in I\}$ the translate of $I$ by $x$. Given a family of functions $(t_q: \F_q\to\C)_q$ and intervals $I_q\subset\F_q$, we are interested in the distribution of the complex random variable
\begin{equation}
  \label{eq:RVSfI}
  \left(\frac{S(t_q,I_q+x)}{\sqrt{|I_q|}}\right)_{x\in\F_q,}
\end{equation}
with respect to the uniform measure on $\F_q$, as $q,|I_q|\to\infty$.

\begin{example}\label{ex:1}
  When $q=p$, the finite field $\F_p$ can be identified with the discrete interval $[1\dots p]$. For an interval $I_H=[1\dots H]\subset[1\dots p]$ and $1\le x\le p$ an integer, $S(t,I_H+x)$ is the partial sum
  \[S(t,x,H):=\sum_{x\le y< x+H} t(y)\]
  of length $H$ starting at $x$. More generally, when $q=p^e$, we can consider ``boxes'' in $\F_q\cong\F_p^e$.
\end{example}

\subsection{The case of Dirichlet characters}\label{subsec:DirichlarCharacters}
In the situation of Example \ref{ex:1} with $(t_p)_p=(\chi_p)_p$ a family of Dirichlet characters, the question of the distribution of the random variable \eqref{eq:RVSfI} appears in the literature as follows:
\begin{itemize}[leftmargin=*]
\setlength{\itemsep}{0.2cm}
\item When $\chi_p$ is the Legendre symbol, Davenport and Erdős \cite{DavenportErdos} showed that the real-valued random variable
 \[\left(S(\chi_p,x,H_p)/\sqrt{H_p}\right)_{x\in\F_p}\]
converges in law to a normal distribution with mean $0$ and unit variance when
\begin{equation}
\label{eq:Hp}
p, \ H_p\to\infty\text{ with }\log{H_p}=o(\log{p}).
\end{equation}
\item Mak and Zaharescu \cite{MakZahShortSums} generalized this result to short sums of the form
\[\tilde S_p(x,H_p)=\sum_{\substack{P=(x_1,x_2)\in C\\ x\le x_1<x+H_p\\ x_2\in I}}\chi_p(g(P))\psi_p(t(P)),\]
where $C$ is an absolutely irreducible affine plane curve over $\F_p$, $g,f\in\F_p(x,y)$ are rational functions, $\psi_p$ (resp. $\chi_p$) is an additive (resp. non-real multiplicative) character modulo $p$, and $I$ is an interval. Under some technical conditions, they similarly obtain that the projection of the random variable $(\tilde S_p(x,H_p)/\sqrt{H_p})_{x\in\F_p}$ on any line through the origin converges in law to a normal distribution with mean $0$ and unit variance when $p,H_p\to\infty$ under \eqref{eq:Hp}.
\item Lamzouri \cite{LamzShortSums} showed that when $\chi_p$ is a non-real Dirichlet character, the random variable \[\left(S(\chi_p,x,H_p)/\sqrt{H_p}\right)_{x\in\F_p}\] converge in law to a normal distribution in 
$\C$ with mean $0$ and covariance matrix $\frac{1}{2}\left(\begin{smallmatrix}
  1&0\\0&1
\end{smallmatrix}\right)$ when $p,H_p\to\infty$ with \eqref{eq:Hp}.
\end{itemize}
\vspace{0.1cm}

All of the above proceed by using the method of moments. To do so, one needs bounds on character sums that follow from the work of Weil on the Riemann hypothesis for curves over finite fields.

A particular aspect of Lamzouri's method in \cite{LamzShortSums} is to consider a \textit{probabilistic model}, where the values of a multiplicative character are modeled as independent random variables uniformly distributed on the unit circle in $\C$. This model is shown to be accurate (in the sense of convergence in law) by bounding an exponential sum.

\subsection{Generalization to trace functions}
In this article, we will consider the question introduced above for families $(t_q: \F_q\to\C)_q$ of \textit{$\ell$-adic trace functions over $\F_q$}, as they appear in particular in the works of Katz (see for example \cite{KatzGKM} and \cite{KatzESDE}), and more recently in the series of papers by Fouvry, Kowalski, Michel and others (see \cite{FKMPisa}, \cite[Section 6]{Polymath8a} or \cite{PG16} for surveys).\\

Using the results surveyed in \cite{FKMSumProducts}, building upon Deligne's generalization of the Riemann Hypothesis over finite fields \cite{Del2} and the works of Katz, we will show that under general assumptions on a family of $\ell$-adic trace functions $(t_q: \F_q\to\C)_{q}$, and a family of sets $I_q\subset\F_q$, the random variable \eqref{eq:RVSfI} converges in law to a normal distribution in $\C\cong\R^2$ when $q, \ H_q=|I_q|\to\infty$ in the range \eqref{eq:Hp}. Hence, we generalize the results of Section \ref{subsec:DirichlarCharacters} to trace functions.\\

For example, for the (normalized) Kloosterman sums of rank $n\ge 2$
\begin{eqnarray}
    t_q(x)&=&\Kl_{n,q}(x)\nonumber\\
    &=&\frac{(-1)^{n-1}}{q^{(n-1)/2}}\sum_{\substack{x_1,\dots,x_n\in\F_q^\times\\ x_1\cdots x_n=x}}e\left(\frac{\tr(x_1+\dots+x_n)}{p}\right)\label{eq:KS}
  \end{eqnarray}
  (where $\tr:\F_q\to\F_p$ is the trace), we get the following:
\begin{theorem*}
  Let $n\ge 2$ and for every prime power $q$, let $I_q\subset\F_q$. The complex random variable
  \[\left(\frac{S(\Kl_{n,q},I_q+x)}{\sqrt{|I_q|}}\right)_{x\in\F_q}\]
   (with respect to the uniform measure on $\F_q$) converges in law to a normal distribution $\Nc$ in $\C\cong\R^2$, with mean 0 and covariance matrix $\left(\begin{smallmatrix}
    1&0\\0&0
  \end{smallmatrix}\right)$ if $n$ is even and $\frac{1}{2}\left(\begin{smallmatrix}
    1&0\\0&1
  \end{smallmatrix}\right)$ if $n$ is odd, when $q,|I_q|\to\infty$ with $\log{|I_q|}=o(\log{q})$.

More precisely, for any $\varepsilon\in(0,1/2)$ and for any closed rectangle $A\subset\C$ with sides parallel to the coordinate axes and Lebesgue measure $\mu(A)$, the probability
  \[P\left(\frac{S(\Kl_{n,q},I_q+x)}{\sqrt{|I_q|}}\in A\right)=\frac{|\{x\in\F_q : S(\Kl_{n,q},I_q+x)/|I_q|^{1/2}\in A\}|}{q}\]
  is given by
\[P(\Nc\in A)+O_\varepsilon\left(\mu(A)\left(q^{-\frac{1}{2}+\varepsilon}+\left(\frac{\log{|I_q|}}{\log{q}}\right)^{2/5}+\frac{1}{\sqrt{|I_q|}}\right)\right)\]
when $q, \ |I_q|\to\infty$ with under the range $\log{|I_q|}=o(\log{q})$ if $n$ is even and $|I_q|=o \left((\log{q})^{\frac{3}{2(1+\varepsilon)}}\right)$ otherwise. As $n\to\infty$ or if $n$ is even, the exponents $2/5$ and $3/2$ can be replaced by $1/2$ and $1$, respectively.
\end{theorem*}

The general results will be stated in Section \ref{sec:statements}.

\subsubsection{Examples}\label{subsubsec:examplesTF}

Examples of $\ell$-adic trace functions over $\F_q$ we will consider include:
\begin{enumerate}[(a)]
\item\label{item:exDirichlet} Dirichlet characters $\chi$ modulo $q$ or compositions $\chi\circ f$, where $f\in\F_q(T)$ is a rational function. This is the case considered in
 \cite{LamzShortSums} (if $f=\id$) and \cite{MakZahShortSums}, when $q=p$.
\item\label{item:exKS} Hyper-Kloosterman sums $\Kl_{n,q}$ of rank $n\ge 2$, or more generally hypergeometric sums, as studied by Katz in \cite{KatzGKM} and \cite{KatzESDE}.
\item\label{item:exGenES} General exponential sums of the form
  \begin{equation}
    \label{eq:expsumGen}
    t_q(x)=\frac{1}{\sqrt{q}}\sum_{y\in\F_q} e\left(\frac{\tr(xf(y)+h(y))}{p}\right)\chi(g(y)),
  \end{equation}
  for $f,g,h\in\Q(X)$ rational functions and $\chi: \F_q^\times\to\C$ a multiplicative character. This includes Birch sums
  \begin{equation}
    \label{eq:BS}
    t_q(x)=\operatorname{Bi}(x,q)=\frac{1}{\sqrt{q}}\sum_{y\in\F_q} e \left(\frac{\tr(xy+y^3)}{p}\right),
  \end{equation}
  considered by Birch, Livné and Katz, and sums of the form 
  \begin{equation}
    \label{eq:FouvryMichel}
    t_q(x)=\frac{1}{\sqrt{q}}\sum_{y\in\F_q}e\left(\frac{\tr(xf(y))}{p}\right),
  \end{equation}
  studied by Katz and Fouvry-Michel (see e.g. \cite{MichelMinorationsSommesExp}).
\item\label{item:exCP} Functions counting points on families of curves over $\F_q$ parametrized by varieties over $\F_q$, as surveyed in \cite[Chapter 10]{KatzSarnak91}.
\end{enumerate}

Note that $t_q$ can be complex or real-valued (the latter occurring for example for hyper-Kloosterman sums of even rank and Birch sums).

\begin{acknowledgements}
The author would like to thank his supervisor Emmanuel Kowalski for guidance and advice during this project. It is a pleasure to acknowledge in particular the influence of the works of \'Etienne Fouvry, Nicholas Katz, Emmanuel Kowalski, Youness Lamzouri and Philippe Michel. The computations present in this document have been performed with the SageMath \cite{Sage} software. This work was partially supported by DFG-SNF lead agency program grant 200021L\_153647. The results also appear in the author's PhD thesis \cite{PG16}.
\end{acknowledgements}

\section{Statement of the results}\label{sec:statements}

\subsection{Trace functions over finite fields}

We briefly recall some definitions and terminology around $\ell$-adic trace function over finite fields necessary to state our results, and refer the reader to \cite{KatzGKM}, \cite[Chapter 7]{KatzESDE}, \cite[Section 6]{Polymath8a}, \cite{FKMPisa}, \cite{FKMCours} or \cite{PG16} for details and further references.

\begin{definition}
  Let $\ell$ be a prime number distinct from the characteristic $p$ of the finite field $\F_q$. We call \textit{$\ell$-adic sheaf over $\F_q$} a constructible sheaf $\Fc$ of $\overline\Q_\ell$-modules on $\P^1/\F_q$ (with respect to the étale topology) which is middle-extension, i.e. for every nonempty open $j: U\to \P^1$ on which $\Fc$ is lisse, we have $\Fc\cong j_*j^*\Fc$. We write $\Sing(\Fc)=\P^1(\overline\F_q)-U_\Fc(\overline\F_q)$ for the set of \textit{singularities} of $\Fc$, where $U_\Fc$ is the maximal open set of lissity\footnote{One shows that such an open exists -- it is where the stalk has generic rank -- and that $\Fc$ is determined by its restriction to $U_\Fc$, see e.g. \cite[8.5.1]{KatzGKM}.} of $\Fc$.
\end{definition}

There is an alternative point of view through $\ell$-adic representations of étale fundamental groups that can be very convenient in practice:

\begin{proposition}
  There is an equivalence of categories between $\ell$-adic sheaves $\Fc$ over $\F_q$ and continuous finite-dimensional $\ell$-adic representations
  \[\rho_\Fc: \pi_{1,q}:=\Gal\big(\F_q(T)^\sep/\F_q(T)\big)\to\GL(\Fc_{\overline\eta})\cong\GL_n(\overline\Q_\ell).\]
  Moreover, $\Fc$ is lisse at $x\in \P^1(\overline\F_q)$ if and only if the inertia group $I_x\le\pi_{1,q}$ acts trivially on $\overline\Q_\ell^n$. The integer $n$ is the \emph{rank} of $\Fc$.
\end{proposition}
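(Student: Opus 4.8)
The plan is to construct the two functors realizing the equivalence and then verify they are mutually quasi-inverse, with the dictionary for lissity and rank following along the way. First I would recall the general correspondence between lisse $\ell$-adic sheaves on a connected normal scheme and continuous representations of its étale fundamental group: for the generic point $\eta=\Spec\F_q(T)$ of $\P^1/\F_q$ with geometric point $\overline\eta$ above it, the category of lisse $\overline\Q_\ell$-sheaves on $\eta$ is equivalent to the category of continuous finite-dimensional $\overline\Q_\ell$-representations of $\pi_1(\eta,\overline\eta)=\Gal(\F_q(T)^{\mathrm{sep}}/\F_q(T))=\pi_{1,q}$, by sending a sheaf to its stalk $\Fc_{\overline\eta}$ equipped with the monodromy action. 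The point is then to promote this to an equivalence with middle-extension sheaves on all of $\P^1$. In one direction, restriction $\Fc\mapsto \Fc|_\eta \mapsto \Fc_{\overline\eta}$ gives the representation $\rho_\Fc$; in the other direction, given a representation $\rho$ one forms the lisse sheaf $\Fc_0$ on a suitable nonempty open $U$ (where one can spread out $\rho$, i.e. on which the corresponding sheaf on $\eta$ extends lisse) and sets $\Fc = j_*j^*\Fc_0$ for $j: U\hookrightarrow\P^1$ the open immersion.

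The key steps, in order, are: (1) show $j_*j^*$ applied to a lisse sheaf on a dense open is middle-extension and independent of the chosen $U$ --- this is where one invokes the existence of a maximal open of lissity $U_\Fc$ (the locus where the stalk attains generic rank, cited in the footnote to \cite[8.5.1]{KatzGKM}) and the fact that for a middle-extension sheaf, $\Fc \cong j_*j^*\Fc$ for \emph{every} open of lissity, so the construction does not depend on choices; (2) check the two functors are quasi-inverse: starting from a middle-extension $\Fc$, restricting to $U_\Fc$ and pushing forward recovers $\Fc$ by the very definition of middle-extension, while starting from $\rho$, the stalk at $\overline\eta$ of $j_*j^*\Fc_0$ equals $(\Fc_0)_{\overline\eta}$ (pushforward along an open immersion is an isomorphism on stalks over the open, and the generic point lies in every dense open), which carries the monodromy action $\rho$; (3) identify the lissity criterion: $\Fc$ is lisse at a geometric point $x$ iff $x\in U_\Fc$, and by the theory of étale sheaves this is equivalent to the action of the inertia subgroup $I_x\le\pi_{1,q}$ on $\Fc_{\overline\eta}\cong\overline\Q_\ell^n$ being trivial --- intuitively, inertia measures the obstruction to extending the covering/local system across $x$, so trivial inertia action means no ramification at $x$, i.e. lissity there; (4) observe that $n=\dim_{\overline\Q_\ell}\Fc_{\overline\eta}$ is by definition the rank, and since $U_\Fc$ is where the stalk has generic rank, this agrees with the rank of the lisse restriction.

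The main obstacle I expect is \emph{step (1)} --- more precisely, making rigorous that $j_*$ of a lisse sheaf from a dense open is constructible, middle-extension, and genuinely independent of the open, and that every middle-extension sheaf arises this way from its maximal lisse locus. This requires the structure theory of constructible sheaves on curves (generic lissity, the behaviour of $j_*$ under étale pushforward, and the compatibility of stalks at singular points with inertia invariants, $(\Fc_x\cong (\Fc_{\overline\eta})^{I_x}$ for the middle extension). All the genuinely new content is in this algebro-geometric input, which is standard and for which one cites \cite{KatzGKM} (notably 8.5.1 and the surrounding discussion) and \cite[Chapter 7]{KatzESDE}; the remaining verifications that the two constructions compose to the identity are then formal, reducing to the classical equivalence between lisse sheaves on $\eta$ and representations of $\pi_1(\eta)$ together with the faithful-flatness of passing to the generic point among dense opens.
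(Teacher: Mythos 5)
The paper itself offers no proof of this proposition --- it is recalled as a standard fact with references to Katz --- and your sketch is precisely the standard argument those references supply: restriction to the generic point in one direction, spreading out to a lisse sheaf on a dense open and applying $j_*j^*$ in the other, with the identification $\Fc_x\cong(\Fc_{\overline\eta})^{I_x}$ at the points of non-lissity and the maximal open $U_\Fc$ guaranteeing independence of choices. The one step that deserves more care is your step (1)/(2), where you spread $\rho$ out ``on a suitable nonempty open $U$'': this is possible only if $\rho$ factors through $\pi_1(U)$ for some dense open $U$, i.e.\ is unramified outside a finite set of points, and this is \emph{not} automatic from continuity (there exist continuous $\ell$-adic characters of $\Gal(\F_q(T)^\sep/\F_q(T))$ ramified at infinitely many places, built for instance via class field theory from infinitely many tame characters of $\ell$-power order). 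So the equivalence is really with the full subcategory of almost-everywhere-unramified representations --- an imprecision already present in the statement as the paper gives it, not one introduced by your argument; with that restriction understood, your outline is correct and complete modulo the standard structure theory of constructible sheaves on curves that you cite.
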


\begin{definition}
  Let $\iota:\overline\Q_\ell\to\C$ be a fixed isomorphism of fields. The \textit{trace function} associated to an $\ell$-adic sheaf $\Fc$ over $\F_q$ corresponding to the representation $\rho_\Fc: \pi_{1,q}\to\GL(V)$ is the function
  \begin{eqnarray*}
    t_\Fc: \F_q&\to&\C\\
    x&\mapsto&\iota\tr \left(\rho_\Fc(\Frob_{x,q}) \mid V^{I_x}\right),
  \end{eqnarray*}
  where $\Frob_{x,q}\in(D_x/I_x)^\sharp\cong\Gal(\overline\F_q/\F_q)^\sharp$ is the geometric Frobenius at $x\in\F_q$, for $D_x\le\pi_{1,q}$ the decomposition group at $x$.
\end{definition}

\begin{definition}
  An $\ell$-adic sheaf $\Fc$ over $\F_q$ is \textit{pointwise pure of weight $0$} if for every finite extension $\F_{q'}/\F_q$ and every $x\in U_\Fc(\F_{q'})$, the eigenvalues of $\rho_{\Fc}(\Frob_{x,q'})$ are Weil numbers of weight $0$, i.e. their images through any isomorphism of fields $\iota: \overline\Q_\ell\to\C$ have unit absolute value.
\end{definition}
By a result of Deligne \cite[1.8]{Del2}, we have $||t_\Fc||_\infty\le\rank(\Fc)$ if $\Fc$ is pointwise pure of weight $0$ (this is clear at points of lissity), so the former definition corresponds to a normalization assumption for the trace function.\\

By the Grothendieck-Lefschetz trace formula, the Euler-Poincaré formula of Grothendieck-Ogg-Safarevich and Deligne's generalization of the Riemann hypothesis over finite fields to weights of $\ell$-adic sheaves \cite{Del2}, we have a precise control on sums of trace functions:

\begin{theorem}\label{thm:DeligneSumTF}
  For $\Fc$ an $\ell$-adic sheaf over $\F_q$ that is pointwise pure of weight $0$, we have
  \[\sum_{x\in \F_q} t_\Fc(x)=q\cdot\tr\left(\Frob_q\,|\,\Fc_{\pi_{1,q}^\geom}\right)+O \left(E(\Fc)\sqrt{q}\right),\]
  where 
  \begin{equation*}
    \xymatrix{
      1\ar[r]&\pi_{1,q}^\geom=\Gal \left(\F_q(T)^\sep/\overline\F_q(T)\right)\ar[r]&\pi_{1,q}\ar[r]&\Gal(\overline\F_q/\F_q)\ar[r]&1
    }
  \end{equation*}
  is exact, $\Frob_q\in\Gal(\overline\F_q/\F_q)$ is the geometric Frobenius, $\Fc_{\pi_{1,q}^\geom}$ is the space of coinvariants of the representation $\rho_\Fc$ of $\pi_{1,q}^\geom$, and
  \begin{equation}
    \label{eq:EFc}
    E(\Fc)= \rank(\Fc)\left[|\Sing(\Fc)|-1+\sum_{x\in\Sing(\Fc)}\Swan_{x}(\Fc)\right].
  \end{equation}
\end{theorem}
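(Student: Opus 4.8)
The plan is to assemble the three ingredients named just before the statement: the Grothendieck--Lefschetz trace formula, Deligne's Riemann Hypothesis over finite fields \cite{Del2}, and the Euler--Poincar\'e formula of Grothendieck--Ogg--Shafarevich. First I would apply the trace formula on the maximal open of lissity $U_\Fc\subseteq\P^1$, on which $\Fc|_{U_\Fc}$ is a lisse sheaf and the middle-extension hypothesis guarantees that $t_\Fc(x)=\iota\tr(\Frob_{x,q}\mid (\Fc|_{U_\Fc})_x)$ is genuinely the trace of geometric Frobenius on the stalk at every $x\in U_\Fc(\F_q)$. This gives
\[\sum_{x\in U_\Fc(\F_q)}t_\Fc(x)=\sum_{i=0}^{2}(-1)^i\tr\left(\Frob_q\mid H^i_c\left(U_{\Fc,\overline\F_q},\Fc\right)\right).\]
The left-hand side differs from $\sum_{x\in\F_q}t_\Fc(x)$ only by the value at $\infty$ (when $\infty\in U_\Fc(\F_q)$) and by the values $t_\Fc(x)$ for $x\in\Sing(\Fc)\cap\F_q$, each of which is $O(\rank(\Fc))$ by pointwise purity; so this correction is $O(\rank(\Fc)(1+|\Sing(\Fc)|))=O(E(\Fc))$ in the nondegenerate range, and is absorbed into the error term.

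Next I would identify the extreme cohomology groups. The group $H^2_c(U_{\Fc,\overline\F_q},\Fc)$ is canonically isomorphic to the Tate twist $\Fc_{\pi_{1,q}^\geom}(-1)$ of the space of geometric coinvariants, the twist accounting for exactly a factor $q$, so this term contributes the main term $q\cdot\tr(\Frob_q\mid\Fc_{\pi_{1,q}^\geom})$. When $\Sing(\Fc)\neq\emptyset$ (the interesting case), $U_\Fc$ is an affine curve, so $H^0_c(U_{\Fc,\overline\F_q},\Fc)=0$; the remaining case $\Sing(\Fc)=\emptyset$ forces $\Fc$ to be geometrically constant (since $\P^1$ is simply connected), and then the asserted identity holds exactly, with no error term, by inspection. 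It then remains to estimate the middle cohomology $H^1_c(U_{\Fc,\overline\F_q},\Fc)$.

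For this I would combine two facts. By Deligne's generalization of the Riemann Hypothesis \cite{Del2}, since $\Fc$ is pointwise pure of weight $0$, the group $H^1_c(U_{\Fc,\overline\F_q},\Fc)$ is mixed of weight $\le 1$, so every eigenvalue of $\Frob_q$ on it has absolute value at most $\sqrt q$; hence $|\tr(\Frob_q\mid H^1_c)|\le\dim H^1_c\cdot\sqrt q$. To bound the dimension, I would invoke the Euler--Poincar\'e formula
\[\dim H^0_c-\dim H^1_c+\dim H^2_c=\rank(\Fc)\left(2-|\Sing(\Fc)|\right)-\sum_{x\in\Sing(\Fc)}\Swan_x(\Fc),\]
valid because the Swan conductor vanishes at points of lissity and $\chi_c(U_{\Fc,\overline\F_q})=2-|\Sing(\Fc)|$. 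Together with $H^0_c=0$ and $\dim H^2_c\le\rank(\Fc)$, and using $\rank(\Fc)\ge 1$ to collect the Swan terms, this yields $\dim H^1_c(U_{\Fc,\overline\F_q},\Fc)\le E(\Fc)$ with $E(\Fc)$ as in \eqref{eq:EFc}. Combining the three steps gives the stated formula. I do not expect any genuine obstacle of a conceptual kind, since all three inputs are quoted: the work lies entirely in the bookkeeping --- using the middle-extension hypothesis to make the trace function agree with the trace on stalks (and hence to identify $H^2_c$ with the coinvariants), treating the point at infinity and the geometrically constant case consistently, and checking that the miscellaneous $O(\rank(\Fc))$ contributions and the Euler-characteristic estimate are indeed majorized with the precise constant in \eqref{eq:EFc}.
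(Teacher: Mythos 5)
The paper offers no proof of this theorem beyond a pointer to \cite{DelEC}, \cite{FKMCours}, \cite{KatzGKM} and \cite{AlgebraicTwists}, and your argument --- Grothendieck--Lefschetz on the open of lissity, identification of $H^2_c$ with the Tate-twisted geometric coinvariants, Deligne's weight bound on $H^1_c$, and the Euler--Poincar\'e formula combined with $\dim H^2_c\le\rank(\Fc)$ to bound $\dim H^1_c$ by $E(\Fc)$ --- is exactly the standard proof given in those references. It is correct, including the bookkeeping for the points of $\Sing(\Fc)\cap\F_q$ and at infinity and the treatment of the degenerate lisse case.
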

\begin{proof}
  See \cite[Exposé 6]{DelEC}, \cite[Chapter 4]{FKMCours}, \cite[Chapter 2]{KatzGKM} or \cite[Section 9]{AlgebraicTwists}.
\end{proof}

\begin{remark}\label{rem:conductorGrowing}
  In the works of Fouvry-Kowalski-Michel and others, the error term is usually only given in terms of the \textit{conductor}
\[\cond(\Fc)=\rank(\Fc)+|\Sing(\Fc)|+\sum_{x\in\Sing(\Fc)} \Swan_x(\Fc),\]
which is independent from $q$ in most ``natural'' families of sheaves. We are more precise in \eqref{eq:EFc} to be able to discuss cases where the conductor will be growing.
\end{remark}

\begin{definition}
  An $\ell$-adic sheaf over $\F_q$ is \textit{irreducible} (resp. \textit{geometrically irreducible}) if the corresponding representation of $\pi_{1,q}$ (resp. of $\pi_{1,q}^\geom$) is irreducible.
\end{definition}

Finally, we recall the definition of monodromy groups.
\begin{definition}
  For a fixed isomorphism of fields $\iota: \overline\Q_\ell\to\C$, the \textit{geometric} (resp. \textit{arithmetic}) \textit{monodromy group} of an $\ell$-adic sheaf $\Fc$ over $\F_q$ with rank $n$ is the algebraic group
\[G_\geom(\Fc)=\iota\overline{\rho_\Fc\big(\pi_{1,q}^\geom\big)}\le G_\arith(\Fc)=\iota\overline{\rho_\Fc\big(\pi_{1,q}\big)}\hspace{0.5cm}\le\GL_n(\C),\]
where $\overline{\,\cdot\,}$ denotes Zariski closure.
\end{definition}

\begin{remark}\label{rem:DeligneSumTFMon}
  The main term in Theorem \ref{thm:DeligneSumTF} can be rewritten as $q\tr(\Frob_q\mid \Fc_{G_\geom(\Fc)})$, which is $q\dim(\Fc_G)$ if $G_\geom(\Fc)=G_\arith(\Fc)$.
\end{remark}

\subsection{Coherent families}

Finally, we introduce the class of families of trace functions to which our results will apply.

\begin{definition}\label{def:coherentFamily}
  Let us fix a prime $\ell$ and an isomorphism of fields $\iota:\overline\Q_\ell\to\C$. A family $(\Fc_q)_q$ of pointwise pure of weight $0$ and geometrically irreducible $\ell$-adic sheaves over $\F_q$ (for $q$ varying over powers of primes distinct from $\ell$) is said to be \textit{coherent} if:
  \begin{enumerate}
  \item (Conductor) $\cond(\Fc_q)$ is bounded independently from $q$,
  \end{enumerate}
  and either:
  \begin{enumerate}[start=2]
  \item \textit{Kummer case}: For every $q$, $\Fc_q$ is a Kummer sheaf $\Lc_{\chi_q\circ f_q}$ for a character $\chi_q:\F_q^\times\to\C^\times$ and $f_q\in\F_q(X)$, and the characters $\chi_q$ are either all real-valued or all complex-valued.
  \item \textit{Classical case}: There exists $G\in\{\SL_{n+1}(\C),\Sp_{2n}(\C),\SO_{n+1}(\C)\}-\{\SO_8(\C)\}$ for some $n\ge 1$ such that for every sheaf $\Fc_q$ over $\F_q$ in the family:
    \begin{enumerate}
    \item (Monodromy groups) The geometric and arithmetic monodromy groups of $\Fc_q$ coincide and are conjugate to $G$ in $\GL_n(\C)$.
    \item (Independence of shifts) There is no geometric isomorphism
      \begin{equation}
        \label{eq:translatinIsom}
        [+a]^*\Fc_q\cong\Fc_q\otimes\Lc\hspace{0.25cm}\text{ or }\hspace{0.25cm}[+a]^*\Fc_q\cong D(\Fc_q)\otimes\Lc
      \end{equation}
      for a sheaf $\Lc$ of rank $1$ over $\F_q$ and $a\in\G_m(\F_q)$, where $D(\Fc_q)$ denotes the dual sheaf (corresponding to the dual representation).
    \end{enumerate}
  \end{enumerate}
\end{definition}
\begin{definition}
  For $\Fc$ an $\ell$-adic sheaf over $\F_q$ and $I\subset\F_q$, we say that $\Fc$ is \textit{$I$-compatible} if, in the case where $\Fc$ is a Kummer sheaf $\Lc_{\chi(f)}$ with $\deg(f)>1$, we have that $\sum_{i=1}^m x_i\neq 0$ for all $1\le m\le\deg(f)$ and $x_1,\dots,x_m\in I$. If $\Fc$ is not a Kummer sheaf, it is always $I$-compatible.
\end{definition}
\begin{example}\label{ex:noZeromSum}
  A Kummer sheaf $\Lc_{\chi(f)}$ is $I$-compatible if we have\linebreak$I\subset[1\dots p/\deg(f))^e\subset\F_q\cong\F_p^e$.
\end{example}
\begin{remarks}
  As we shall see, these conditions are fairly generic for natural families arising in number theory. For example, geometric irreducibility and uniform boundedness of conductors are stable by $\ell$-adic Fourier transform. In the classical case, the equality of monodromy groups is to control a main term through monodromy (see Remark \ref{rem:DeligneSumTFMon}), while the other conditions are to show that the monodromy group of a sheaf obtained as a sum of translates of the $\Fc_q$ is as large as possible, through the Goursat-Kolchin-Ribet criterion of Katz.
\end{remarks}

\subsection{Qualitative version}

\begin{theorem}\label{thm:gaussian1}
  Let $(t_q: \F_q\to\C)_q$ be a coherent family of trace functions and let $(I_q)_q$ be a family of subsets $I_q\subset\F_q$ such that $\Fc_q$ is $I_q$-compatible. Then the complex random variable
  \[\left(\frac{S(t_q,I_q+x)}{\sqrt{|I_q|}}\right)_{x\in\F_q}\]
  (with respect to the uniform measure on $\F_q$) converges in law to a normal distribution $\Nc$ in $\C\cong\R^2$, with mean 0 and covariance matrix
  \begin{equation}
    \label{eq:covmatrix}
    \left(\begin{matrix}
    1&0\\0&0
  \end{matrix}\right)\text{ if }t_q\text{ has real values,}\hspace{0.4cm}\frac{1}{2}\left(\begin{matrix}
    1&0\\0&1
  \end{matrix}\right)\text{ otherwise}
  \end{equation}
when $q,|I_q|\to\infty$ with $\log{|I_q|}=o(\log{q})$.
\end{theorem}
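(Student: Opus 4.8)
The plan is to prove Theorem \ref{thm:gaussian1} by the method of moments: to show convergence in law to the claimed Gaussian $\Nc$ in $\C \cong \R^2$, it suffices (since a Gaussian distribution is determined by its moments and they do not grow too fast) to show that for all fixed integers $k, \ell \ge 0$,
\[
\frac{1}{q}\sum_{x \in \F_q} \left(\frac{S(t_q, I_q + x)}{\sqrt{|I_q|}}\right)^{k} \overline{\left(\frac{S(t_q, I_q + x)}{\sqrt{|I_q|}}\right)}^{\ell} \longrightarrow \mathbb{E}\!\left[Z^k \overline{Z}^\ell\right]
\]
as $q, |I_q| \to \infty$ with $\log|I_q| = o(\log q)$, where $Z \sim \Nc$. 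Concretely, $\mathbb{E}[Z^k\overline{Z}^\ell]$ equals $k!$ if $k = \ell$ and $t_q$ is complex-valued (resp. $(k+\ell)!/(2^{(k+\ell)/2}((k+\ell)/2)!)$-type expressions in the real case), and vanishes otherwise; the combinatorial content is that these match the moments of a sum of $\max(|I_q|, \dots)$ i.i.d.\ uniform-on-the-circle (resp.\ $\pm 1$) random variables in the limit, which is the classical CLT computation already appearing in Davenport--Erd\H{o}s and Lamzouri.

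The first step is to expand the mixed moment. Writing $N = |I_q|$ and expanding, the left-hand side becomes
\[
\frac{1}{N^{(k+\ell)/2}} \cdot \frac{1}{q} \sum_{x \in \F_q} \sum_{\substack{y_1, \dots, y_k \in I_q \\ z_1, \dots, z_\ell \in I_q}} t_q(x+y_1)\cdots t_q(x+y_k)\,\overline{t_q(x+z_1)}\cdots\overline{t_q(x+z_\ell)}.
\]
Since $t_q = t_{\Fc_q}$ is the trace function of an $\ell$-adic sheaf, the product $\prod_i t_q(x+y_i)\prod_j \overline{t_q(x+z_j)}$ is (up to the contribution of singularities, which is $O_k(1)$ pointwise and affects only a negligible set of $x$) the trace function at $x$ of the sheaf $\Hc = \bigotimes_i [+y_i]^*\Fc_q \otimes \bigotimes_j [+z_j]^* D(\Fc_q)$. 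Applying Theorem \ref{thm:DeligneSumTF} to $\Hc$, the inner sum over $x$ equals $q \cdot \dim \Hc^{\pi_{1,q}^{\geom}} + O(E(\Hc)\sqrt q)$, where $E(\Hc) \ll_{k,\ell} 1$ by the uniform conductor bound (using that conductors of tensor products and pullbacks are controlled). Thus after dividing by $q$, the contribution of the error term is $O_{k,\ell}(N^{-(k+\ell)/2} \cdot N^{k+\ell} \cdot q^{-1/2}) = O_{k,\ell}(N^{(k+\ell)/2} q^{-1/2})$, which tends to $0$ precisely in the range $\log N = o(\log q)$. So everything reduces to understanding, for each tuple $(\underline{y}, \underline{z})$, the dimension of coinvariants $\dim \Hc_{\pi_{1,q}^{\geom}} = \dim \Hc^{\pi_{1,q}^{\geom}}$ (these agree by semisimplicity/pointwise purity considerations, or by the fact that $\Hc$ is pointwise pure of weight $0$ and one works with the invariants).

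The key step is the monodromy computation: show that $\dim \Hc^{\pi_{1,q}^{\geom}}$ equals the expected ``diagonal'' count. By the coherence hypotheses, the translates $[+a]^*\Fc_q$ for distinct $a \in I_q$ (with $I_q$-compatibility ruling out the Kummer degeneracies) are pairwise geometrically non-isomorphic, and no translate is geometrically isomorphic to a twist of a dual of another by a rank-one sheaf. This is exactly the input to Katz's Goursat--Kolchin--Ribet criterion, which then shows that the geometric monodromy group of $\bigoplus_{a \in J} [+a]^*\Fc_q$ (for $J \subseteq I_q$ the set of distinct shifts occurring) is as large as possible, namely the full product $\prod_{a \in J} G$ embedded via the standard representation on each factor. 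Consequently, by classical invariant theory for the classical groups $\SL, \Sp, \SO$ (the $\SO_8$ triality exception being explicitly excluded), the space of invariants of $\Hc$ in the tensor product decomposes multiplicatively over the factors, and a nonzero invariant forces the $y$'s and $z$'s to pair up: the $\SL_{n+1}$-invariants in $\mathrm{std}^{\otimes k} \otimes (\mathrm{std}^\vee)^{\otimes \ell}$ require $k = \ell$ and count perfect matchings (giving $k!$ in the limit where all shifts are distinct), while in the self-dual $\Sp/\SO$ case the standard and dual representations coincide, $\mathrm{std}^{\otimes(k+\ell)}$ has invariants only when $k + \ell$ is even, counted by the relevant Brauer-diagram / perfect-matching number (matching the real-valued Gaussian moments with covariance $\tfrac12\mathrm{Id}$ resp.\ $\mathrm{diag}(1,0)$). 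In the Kummer case one argues directly with multiplicative characters as in Lamzouri, the ``independence of shifts'' being automatic. One must also check that tuples $(\underline y, \underline z)$ with a repeated shift contribute a negligible fraction: there are $O_{k,\ell}(N^{(k+\ell)/2 - 1})$ such tuples (one coincidence saves a factor $N$), each contributing $O_{k,\ell}(1)$ after normalization — wait, more carefully, a tuple with some shifts repeated still has all dimensions $O_{k,\ell}(1)$, and there are $\ll N^{(k+\ell)/2-1/2}$ fewer of them relative to $N^{(k+\ell)/2}$ worth of ``main'' diagonal tuples, so their total contribution is $O(N^{-1/2}) \to 0$.

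The main obstacle is the monodromy computation in the classical case: verifying that the hypotheses of Katz's Goursat--Kolchin--Ribet criterion are met requires knowing both that the pairwise non-isomorphy of distinct translates $[+a]^*\Fc_q$ follows from clause (b) (independence of shifts) of the classical case — here one must rule out not just $[+a]^*\Fc_q \cong \Fc_q$ but $[+a]^*\Fc_q \cong [+b]^*\Fc_q \otimes \Lc$ for all rank-one $\Lc$, which after translating reduces to the stated condition with $a \mapsto a - b$ — and that after forming the direct sum the resulting group-theoretic invariant-theory bookkeeping genuinely yields the classical CLT moments. Getting the archimedean/combinatorial matching exactly right, including the distinction between the $\SL$ (complex, covariance $\tfrac12\mathrm{Id}$), $\Sp/\SO_{\mathrm{odd}}$ (symplectic/orthogonal self-duality) cases, is delicate but follows the template of Lamzouri's article once the monodromy is in hand; I would treat the group theory (First and Second Fundamental Theorems of invariant theory for the classical groups, plus Katz's large-image criterion) as the technical heart, and reduce the probability statement to moment convergence via a standard approximation argument (rectangles, Lévy-type continuity).
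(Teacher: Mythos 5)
Your proposal is correct in outline, and the algebraic core (expanding the mixed moment, applying Theorem \ref{thm:DeligneSumTF} to the tensor product of translates, and invoking the Goursat--Kolchin--Ribet criterion to make the monodromy of $\bigoplus_a [+a]^*\Fc_q$ maximal so that the main term factors as $\prod_a \mult_1(\Std^{\otimes k_a}\otimes D(\Std)^{\otimes r_a})$) is exactly the paper's Proposition \ref{prop:sumProducts}. Where you genuinely diverge is in the concluding step: you identify the limit of the moments with Gaussian moments \emph{directly}, by counting the dominant tuples (perfect matchings of $y$'s with $z$'s on distinct shifts, giving $k!\,N^k$ in the non-self-dual case, pair partitions giving $(k+\ell-1)!!\,N^{(k+\ell)/2}$ in the self-dual case) and showing all other nonvanishing configurations have at most $(k+\ell-1)/2$ distinct shifts, hence contribute $O(N^{-1/2})$. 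This is the Davenport--Erd\H{o}s/Mak--Zaharescu route, which the paper explicitly contrasts with its own: the paper instead interprets the main term as the exact moment $M_{\mathrm{prob}}(k,r;H)$ of a random walk $Z_1+\dots+Z_H$ of i.i.d.\ traces of Haar-random matrices in a maximal compact subgroup, and then delegates the entire combinatorial identification to the central limit theorem with moment convergence (Proposition \ref{prop:CLT}), avoiding any explicit matching count. Your approach is more self-contained and elementary at this step; the paper's buys two things: it requires no hand combinatorics (your bookkeeping of which coincidence patterns survive, e.g.\ the fact that $\mult_1(\Std^{\otimes k_a}\otimes D(\Std)^{\otimes r_a})$ can be nonzero for $k_a\neq r_a$ when the rank divides $k_a-r_a$, is swallowed by uniform integrability), and it sets up the model whose high-moment bounds drive the quantitative Theorem \ref{thm:gaussian2}. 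Two points you should tighten if writing this up: the main term of Theorem \ref{thm:DeligneSumTF} is $\tr(\Frob_q\mid \Hc_{\pi_{1,q}^{\geom}})$, and replacing it by the dimension of (co)invariants uses precisely the coherence hypothesis $G_{\geom}=G_{\arith}$ for the direct sum sheaf (Remark \ref{rem:DeligneSumTFMon}), not a generic purity argument; and in the Kummer case the independence of shifts is not ``automatic'' --- it is exactly what the $I_q$-compatibility hypothesis is for, via the non-$d$-power argument for $\prod_i f(X+a_i)^{k_i-r_i}$.
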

\begin{remarks}
  \begin{enumerate}
  \item We do not require that $I_q$ be an interval, but it can rather be \textit{any} (small) subset.
  \item The result shows in particular that the limit has independent real and imaginary parts.
  \item As we shall see, the condition on $t_q$ being real-valued can be reformulated as a condition on the monodromy group of the family.
  \end{enumerate}
\end{remarks}

  To prove this theorem, we extend and adapt the method of \cite{LamzShortSums}. The values of the trace functions are modeled by random variables distributed like traces of random matrices uniform in maximal compact subgroups of the monodromy group with respect to the Haar measure (as in Deligne's equidistribution theorem), and the short sums by random walks.

The $\ell$-adic formalism and Deligne's analogue of the Riemann hypothesis over finite fields applied to sum of products are used to show that this model is accurate, through the method of moments.

The conclusion then follows from the central limit theorem.

We mention that similar ideas are also used in \cite{KowSawin} to study the paths obtained by joining partial Kloosterman and Birch sums, as stochastic processes.

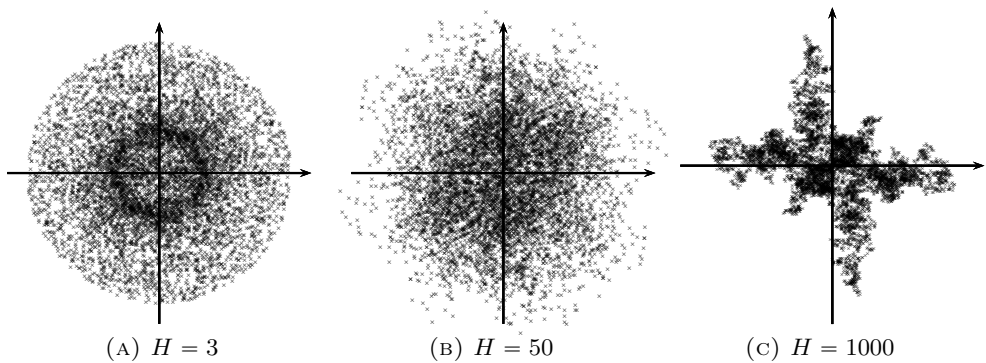
\begin{figure}
  \centering
 \psset{xunit=1cm,yunit=1cm}
  \subfloat[$H=3$]{
    \begin{pspicture}(-2,-2)(2.2,2)
      \psaxes[Dy=50,Dx=0.5,ticks=none,labels=none]{->}(0,0)(-2,-2)(2,2)
      \readdata[nStep=10]{\graph}{data/Dirichlet-p7927-H3.txt}
      \dataplot[plotstyle=dots,dotsize=0.05,dotstyle=x]{\graph}
    \end{pspicture}}
  \psset{xunit=1cm,yunit=1cm}
  \subfloat[$H=50$]{
    \begin{pspicture}(-2.2,-2)(2,2)
      \psaxes[Dy=50,Dx=0.5,ticks=none,labels=none]{->}(0,0)(-2,-2)(2,2)
      \readdata[nStep=10]{\graph}{data/Dirichlet-p7927-H50.txt}
      \dataplot[plotstyle=dots,dotsize=0.05,dotstyle=x]{\graph}
    \end{pspicture}}
  \psset{xunit=1cm,yunit=0.7cm}
  \subfloat[$H=1000$]{
    \begin{pspicture}(-2.2,-3)(2,3)
      \psaxes[Dy=50,Dx=0.5,ticks=none,labels=none]{->}(0,0)(-2,-3)(2,3)
      \readdata[nStep=10]{\graph}{data/Dirichlet-p7927-H1000.txt}
      \dataplot[plotstyle=dots,dotsize=0.04,dotstyle=x]{\graph}
    \end{pspicture}}
  \caption{Distribution of sums of trace functions for a Dirichlet character modulo $p=7927$ of order $p-1$.}
\end{figure}

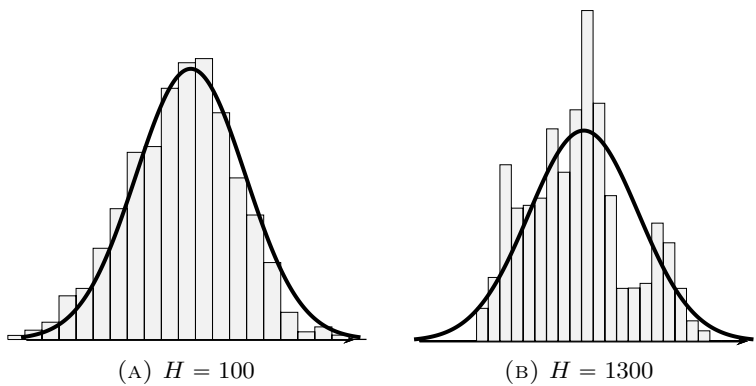
\begin{figure}
  \centering
    \subfloat[$H=100$]{
    \psset{xunit=0.72cm,yunit=9cm}
    \begin{pspicture}(-3.5,-0.01)(3.5,0.45)
      \psaxes[labels=none,ticks=none,Dy=0.25,Dx=2]{->}(0,0)(-3,0)(3,0.3)
      \readdata[nStep=20]{\graph}{data/gaussian-Kl-H100.txt}
      \dataplot[plotstyle=bar,barwidth=0.225,linewidth=0.2pt,fillstyle=solid,fillcolor=vlightgray]{\graph}
      \psplot[linewidth=1.5pt,plotpoints=200]{-3.1}{3.1}{0.3989*2.718^(-x^2/2.0)}
    \end{pspicture}}
      \subfloat[$H=1300$]{
    \psset{xunit=0.72cm,yunit=7cm}
    \begin{pspicture}(-3.5,-0.01)(3.5,0.45)
      \psaxes[labels=none,ticks=none,Dy=0.25,Dx=2]{->}(0,0)(-3,0)(3,0.45)
      \readdata[nStep=20]{\graph}{data/gaussian-Kl-H1300.txt}
      \dataplot[plotstyle=bar,barwidth=0.15,linewidth=0.2pt,fillstyle=solid,fillcolor=vlightgray]{\graph}
      \psplot[linewidth=1.5pt,plotpoints=200]{-3.1}{3.1}{0.3989*2.718^(-x^2/2.0)}
    \end{pspicture}}
  \caption{Distribution of sums of trace functions for the (real-valued) Kloosterman sum $\Kl_{2}$ modulo $p=7927$. In bold, the density function of a standard normal random variable.}
\end{figure}

\subsection{Quantitative version}

  Actually, Lamzouri used more precise information than the central limit theorem: the first moments of the model correspond to those of a Gaussian, and are more generally bounded by them. This allows him to approximate the characteristic function of $\left(S(\chi_p,x,H_p)\right)_{x\in\F_p}$ asymptotically, and in turn, gives a bound on the error term for the joint distribution function (what we will call a \textit{quantitative version} of the convergence in law result) by using an identity of Selberg.\\
    
We also get a quantitative version for trace functions by using the fact that moments\footnote{\label{footnote:moments}For a complex-valued random variable $X$, we consider here the moments $\E(X^k\overline{X}^r)$ (and \textit{not} $\E((\Re{X})^k(\Im{X})^r)$); see Remark \ref{rem:momentsconjreim} below.} of traces of random matrices in classical groups are also Gaussian (in $\C\cong\R^2$) as the rank grows, as already remarked and exploited for example by Diaconis-Shahshahani \cite{DiaconisShahshahani}, Pastur-Vasilchuck \cite{PasturVasilchuk}, as well as Larsen \cite{LarsenSatoTateNormal} in the context of trace functions.

More precisely, one rather needs subgaussian bounds on high order moments with respect to the rank, but exploiting the fact that they become exactly Gaussian allows to improve the error terms as the rank grows.

Hence, this uses a different phenomenon than the averaging of the central limit theorem: the random variables modeling the values of the trace function are themselves ``close to Gaussian''.\\

The following is then the extension of the main theorem of \cite{LamzShortSums} (rather than Theorem \ref{thm:gaussian1}):

  \begin{theorem}\label{thm:gaussian2}
  In the notations and hypotheses of Theorem \ref{thm:gaussian1}, fix $\varepsilon\in(0,1/2)$ and let $R$ be the rank of the monodromy group of the family. For any closed rectangle $A\subset\C\cong\R^2$ with sides parallel to the coordinate axes and Lebesgue measure $\mu(A)$, the probability
  \[P\left(\frac{S(t_q,I_q+x)}{\sqrt{|I_q|}}\in A\right)=\frac{|\{x\in\F_q : S(t_q,I_q+x)/\sqrt{|I_q|}\in A\}|}{q}\]
  is given by
\[P(\Nc\in A)+O_\varepsilon\left(\mu(A)\left(q^{-\frac{1}{2}+\varepsilon}+\left(\frac{\log{|I_q|}}{\log{q}}\right)^{\beta}+\frac{1}{\sqrt{|I_q|}}\right)\right)\]
when $q, \ |I_q|\to\infty$ with
\[
\begin{cases}
  \log{|I_q|}=o(\log{q})&:\text{real-valued and Kummer cases}\\
  |I_q|=o\left((\log{q})^{\frac{2R}{(2R-1)(1+\varepsilon)}}\right)&:\text{otherwise},
\end{cases}
\]
where $\Nc$ is a normal random variable in $\C$ with mean $0$ and covariance matrix $\Gamma$ as in Theorem \ref{thm:gaussian1}, and
\[\beta=
\begin{cases}
  1/2-\varepsilon&:\text{real-valued and Kummer cases}\\
  \frac{R-1}{2R-1}&:\text{otherwise}.
\end{cases}
\]
\end{theorem}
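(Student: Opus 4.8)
The plan is to establish the quantitative convergence via the method of moments, but refined at two levels: first passing from the short sum $S(t_q,I_q+x)/\sqrt{|I_q|}$ to a probabilistic model $\mathcal{S}$ (a random walk whose steps are traces of independent Haar-random matrices in a maximal compact of $G$), and then comparing $\mathcal{S}$ to the Gaussian $\mathcal{N}$. For the first step, I would compute, for each pair of nonnegative integers $(k,r)$, the mixed moment $\frac{1}{q}\sum_{x\in\F_q} \big(S(t_q,I_q+x)/\sqrt{|I_q|}\big)^k \overline{\big(S(t_q,I_q+x)/\sqrt{|I_q|}\big)}^r$. Expanding, this is a sum over $(k+r)$-tuples $(y_1,\dots,y_{k+r})\in I_q^{k+r}$ (shifted) of $\frac{1}{q}\sum_x \prod t_q(x+y_i)\prod\overline{t_q(x+y_j)}$, which by the middle-extension/Fourier machinery is a sum of a trace function of the tensor-product sheaf $\bigotimes [+y_i]^*\mathcal{F}_q \otimes \bigotimes [+y_j]^* D(\mathcal{F}_q)$; the diagonal tuples contribute a main term $= \mathbb{E}(\mathcal{S}^k\overline{\mathcal{S}}^r) + (\text{error of size }O(1/\sqrt{|I_q|}))$ coming from the matching of the random-walk combinatorics with the monodromy integral (this is where the $I_q$-compatibility and the "independence of shifts" hypotheses enter, via the Goursat–Kolchin–Ribet criterion of Katz, to guarantee the tensor sheaf has no trivial geometric constituents unless the shifts match up in the expected way), and the off-diagonal tuples contribute $O(E(\mathcal{F}_q^{\otimes\cdots})\sqrt{q}/q)$ by Theorem~\ref{thm:DeligneSumTF}. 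Summing over the $\ll |I_q|^{k+r}$ tuples gives an error $\ll |I_q|^{k+r} q^{-1/2+o(1)}$ for the $(k,r)$-moment, which is negligible provided $k+r$ stays below roughly $\frac{\log q}{2\log|I_q|}$.

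For the second step I would invoke the fact (Diaconis–Shahshahani, Pastur–Vasilchuk, Larsen) that for $G\in\{\SL_{R},\Sp_{2R},\SO_{R+1}\}$, the mixed moments $\mathbb{E}(\mathcal{S}^k\overline{\mathcal{S}}^r)$ of the model random walk agree exactly with the Gaussian moments $\mathbb{E}(\mathcal{N}^k\overline{\mathcal{N}}^r)$ for $k+r$ up to a threshold growing with $R$ (for $\SU_R$ the agreement is exact for $k+r\le \ldots$ roughly $2R$; for the symplectic/orthogonal cases a similar bound holds), and are subgaussian in general. In the Kummer case and the real-valued case the model is simpler (circle-uniform or $\pm$-type) and one has the stronger exact-Gaussianity/subgaussian control that yields $\beta = 1/2-\varepsilon$ directly, exactly as in Lamzouri. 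In the remaining classical case, truncating the characteristic function at the point where either the moment method breaks ($k+r\approx \log q/\log|I_q|$) or the model ceases to be exactly Gaussian ($k+r\approx R$) and bounding the tail by the subgaussian estimate produces the exponent $\beta = \frac{R-1}{2R-1}$ and the constraint $|I_q| = o\big((\log q)^{2R/((2R-1)(1+\varepsilon))}\big)$; as $R\to\infty$ these tend to $1/2$ and $(\log q)^{1/(1+\varepsilon)}$.

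To convert the matching of moments into the stated rectangle-probability estimate I would follow Lamzouri's use of an identity of Selberg (a truncated formula for the indicator of a box in terms of the characteristic function, with explicit error $\ll \mu(A)/T$ when truncating the Fourier integral at height $T$): choosing $T$ an appropriate power of $\log q/\log|I_q|$ balances the truncation error $\mu(A)/T$ against the accumulated moment error, giving the $\mu(A)\big((\log|I_q|/\log q)^{\beta} + |I_q|^{-1/2} + q^{-1/2+\varepsilon}\big)$ shape; the $q^{-1/2+\varepsilon}$ term is the residual from Theorem~\ref{thm:DeligneSumTF} and the $|I_q|^{-1/2}$ from the diagonal-tuple over/undercounting. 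The Kloosterman corollary then follows since, by Katz, $\mathrm{Kl}_{n,q}$ gives a coherent family with $G = \Sp_{n}$ ($n$ even, real-valued, so $\beta=1/2$, exponent $3/2\to$ actually the $n$ even case collapses to the stronger bounds) or $G=\SL_n$ ($n$ odd), whence $R = n/2$ resp. $(n-1)$ish and the displayed $2/5 = \frac{R-1}{2R-1}$ for the smallest odd case $n=3$, $3/2 = \frac{2R}{(2R-1)}$-type bound follow.

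The main obstacle I anticipate is the first step's uniform control of the error term $E(\mathcal{F}_q^{\otimes k}\otimes D(\mathcal{F}_q)^{\otimes r})\sqrt{q}$: one must bound the conductor (rank, number of singularities, and Swan conductors) of the $(k+r)$-fold tensor/shift sheaf polynomially — ideally exponentially — in $k+r$, uniformly over the $|I_q|^{k+r}$ choices of shift tuple, since even a factor like $C^{k+r}$ in the conductor must be beaten by $q^{-1/2}$, which is what forces the precise range of $|I_q|$ and the presence of $q^{-1/2+\varepsilon}$ rather than $q^{-1/2}$. The second delicate point is identifying precisely, for each classical group $G$ (and for the subtle excluded case $\SO_8$, handled by exclusion), the exact order up to which the random-matrix moments are genuinely Gaussian versus merely subgaussian, since the exponent $\beta$ and the admissible range of $|I_q|$ depend sensitively on this threshold as a function of $R$.
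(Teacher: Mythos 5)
Your first step (accuracy of the model via sums of products of shifted trace functions, Deligne's bound with explicit conductor dependence, and the Goursat--Kolchin--Ribet criterion) and your appeal to the random-matrix moment computations match the paper's Propositions \ref{prop:sumProducts}, \ref{prop:moments} and \ref{prop:momentsRM}. The genuine gap is in the final conversion from moments to rectangle probabilities. You propose to follow Lamzouri's original route: approximate the characteristic function of the empirical sums by that of the model, then approximate the model's characteristic function by the Gaussian's, then apply Selberg's identity. The paper deliberately does \emph{not} do this: it applies Selberg's smoothing (Corollary \ref{cor:approximationSelberg}) only to compare the empirical distribution with the distribution of the model $S(H)$, and then invokes a two-dimensional Berry--Esseen theorem (Proposition \ref{prop:BerryEsseen}) to compare $P(S(H)\in A)$ with $P(\Nc\in A)$; this last step is precisely where the term $\mu(A)|I_q|^{-1/2}$ in the statement comes from. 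Your attribution of the $|I_q|^{-1/2}$ term to ``diagonal-tuple over/undercounting'' in the moment computation is therefore incorrect: the comparison between empirical and model moments in \eqref{eq:MpMprobtilde} carries only the $q^{-1/2+\varepsilon}$ error, and $|I_q|^{-1/2}$ is the normal-approximation error of the random walk.

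This is not a cosmetic difference. As noted in Remark \ref{rem:rangeNSD}, the Berry--Esseen route is what improves the power of $|I_q|$ from $1/4$ (which is what Lamzouri's route yields) to $1/2$, and it is needed in the non-self-dual classical case: there the single-step moments $\E(Z^k\overline{Z}^r)$ are Gaussian only for $k,r\le R$ and have infinitely many nonzero off-diagonal entries, so the characteristic-function approximation of Lemma \ref{lemma:charfunctMomentsAverage} holds only for $|u|$ up to a small power of $H$ determined by $R$, which is too restrictive to carry out your proposed Gaussian approximation of $\phi_H$ over the whole truncated square with the required precision. Two smaller points: the condition $|I_q|=o\big((\log q)^{2R/((2R-1)(1+\varepsilon))}\big)$ does not come from the accuracy-of-the-model step (that step only needs $\log|I_q|=o(\log q)$) but from forcing the truncation order $M$ to exceed a power of $H$ so that the subgaussian bound $(M+H-1)^M H^M$ on $M_\prob(M,M;H)$ becomes usable; and your numerology for Kloosterman sums is off, since for $\Kl_3$ the monodromy group $\SL_3(\C)$ has rank $R=2$, so $\frac{R-1}{2R-1}=\frac{1}{3}$, not $\frac{2}{5}$.
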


\begin{remark}\label{rem:rangeNSD}
  By using a generalization of the Berry-Esseen inequality from \cite{NormAppr}, we improve the method of Lamzouri, which is necessary in the non-real-valued case (see the outline at the beginning of Section \ref{sec:quantitative}). Moreover:
  \begin{enumerate}
  \item In the self-dual case, Theorem \ref{thm:gaussian2} recovers the bound and the range of \cite{LamzShortSums}, with an improvement on the power of $|I_q|$ (from $1/4$ to $1/2$), thanks to a modification of the method.
  \item   In the non-self-dual case, we recover the bound valid for Dirichlet characters when the rank $R\to\infty$, but under the weaker range $|I_q|=o\left((\log{q})^{\frac{R}{R-1}}\right)$ than the one for which Theorem \ref{thm:gaussian1} is valid. We will explain the reason for this later on.
  \end{enumerate}
\end{remark}

\subsection{Examples}\label{subsec:examplesTwisting}

In Section \ref{sec:examples}, we will prove that natural families arising from the examples of Section \ref{subsubsec:examplesTF} are coherent, so that Theorems \ref{thm:gaussian1} and \ref{thm:gaussian2} apply to them.

To make the arithmetic and geometric monodromy groups coincide, we may eventually need to replace a family $(\Fc_q)_q$ by the twisted family $(\alpha_q\otimes\Fc_q)_q$ for $\alpha_q\in\overline\Q_\ell$ a Weil number of weight $0$. This has simply the effect of multiplying the trace function by $\alpha_q$, and the covariance matrix of Theorem \ref{thm:gaussian1} by the orthonormal matrix
\[\left(
  \begin{matrix}
    \Re{\alpha_q}&-\Im{\alpha_q}\\\Im{\alpha_q}&\Re{\alpha_q}
  \end{matrix}
\right),\]
where we identify $\alpha_q$ with its image through the fixed isomorphism $\iota: \overline\Q_\ell\to\C$.

\subsection{Moments of random matrices in classical groups}

  As we mentioned, an important ingredient in the proof of Theorem \ref{thm:gaussian2} is the following:
\begin{proposition}\label{prop:momentsRM}
  For $n\ge 1$, let $G$ be $\SL_{n+1}(\C)$, $\Sp_{2n}(\C)$ or $\SO_{n+1}(\C)$ with standard representation $\Std$. Then, for $R=\rank(G)$ (namely $n$, $n$ and $\floor{(n+1)/2}$ respectively):
  \begin{enumerate}
  \item If $\Std$ is self-dual (i.e. in the symplectic case),
    \begin{align}
      \label{eq:quantcondReal}
      \mult_1(\Std^{\otimes k})&=0 &&(k\ge 0\text{ odd}),\\
      \mult_1(\Std^{\otimes k})&=(k-1)!! &&(0\le k\le R \text{ even}),\label{eq:quantcondReal2}\\
      \mult_1(\Std^{\otimes k})&\le(k-1)!! &&(k\ge 1).\label{eq:quantcondReal3}
    \end{align}
  \item Otherwise,
    \begin{align}
      \label{eq:quantcondComplex}
      \mult_1(\Std^{\otimes k}\otimes D(\Std^{\otimes k}))&=k!  &&(0\le k\le R),\\
      \mult_1(\Std^{\otimes k}\otimes D(\Std^{\otimes r}))&=0&&(0\le k\neq r\le R),\label{eq:quantcondComplex2}\\
      \mult_1(\Std^{\otimes k}\otimes D(\Std^{\otimes r}))&\le\sqrt{k!r!}&&(k,r\ge 0),\label{eq:quantcondComplex3}
    \end{align}
  \end{enumerate}
  where $\mult_1(\,\cdot\,)$ denotes the multiplicity of the trivial representation in a representation of $G$.
\end{proposition}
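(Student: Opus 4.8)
The plan is to reduce everything to the Weyl integration formula and classical branching/invariant-theory facts for the three families of classical groups. The key observation is that $\mult_1(V) = \dim V^G = \int_G \tr(g\mid V)\,d\mu(g)$ for the Haar probability measure $\mu$, so that $\mult_1(\Std^{\otimes k})$ equals the moment $\int_G \tr(g)^k\,d\mu(g)$, and $\mult_1(\Std^{\otimes k}\otimes D(\Std^{\otimes r}))$ equals $\int_G \tr(g)^k\overline{\tr(g)}^{\,r}\,d\mu(g)$ (using unitarity of the maximal compact, so that the dual representation has trace $\overline{\tr(g)}$). Thus the statement is precisely a statement about moments of the trace of a random matrix in $U\!\mathrm{Sp}(2n)$, $SU(n+1)$ and $SO(n+1)$ (compact forms), which is exactly the setting of Diaconis--Shahshahani and Diaconis--Evans. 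First I would invoke their results: in the symplectic and (special) orthogonal cases the trace is real, its low moments agree with those of a real Gaussian up to the rank, and all moments are bounded by Gaussian ones; in the $SU(n+1)$ case $\tr(g)$ is complex, $\E(\tr(g)^k\overline{\tr(g)}^{\,r})$ vanishes for $k\neq r$ in the stable range by the $U(1)$-equivariance (center acting by scalars forces $k\equiv r$), equals $k!$ when $k=r\le R$, and is bounded by $\sqrt{k!\,r!}$ in general by Cauchy--Schwarz together with the real case.

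More concretely, for the exact evaluations I would argue representation-theoretically rather than via combinatorics of permutations. In the symplectic case $\Std = V_{\varpi_1}$, and the decomposition of $\Std^{\otimes k}$ into irreducibles is governed by the Brauer/Littlewood restriction rules; the multiplicity of the trivial representation counts "symplectic oscillating tableaux" or equivalently perfect matchings of $k$ points compatible with the symplectic form, and for $k\le 2n = $ (twice the rank, but really $k \le R$ suffices for stability in the relevant sense) this count is unobstructed and equals $(k-1)!!$ for even $k$, $0$ for odd $k$; the inequality $\mult_1(\Std^{\otimes k})\le (k-1)!!$ for all $k$ then follows because imposing the symplectic relations can only cut down the number of matchings. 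For $SO(n+1)$ the same works with the orthogonal form. For $SL_{n+1}$ with $\Std = V_{\varpi_1}$: a tensor monomial $\Std^{\otimes k}\otimes D(\Std)^{\otimes r}$ contains the trivial representation with multiplicity equal to the number of ways to pair up the $k$ "up" indices with the $r$ "down" indices (for $k=r$, this is $k!$) minus corrections coming from the antisymmetrizer $\varepsilon$ on $n+1$ indices; for $k=r\le R = n$ these corrections do not yet appear, giving exactly $k!$, the vanishing for $k\neq r$ being the weight/center argument above, and the bound $\le \sqrt{k!\,r!}$ following from Cauchy--Schwarz applied to the inner product $\langle \Std^{\otimes k},\Std^{\otimes r}\rangle$.

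Alternatively — and this may be the cleanest route to write up — I would just cite Diaconis--Shahshahani \cite{DiaconisShahshahani} and Diaconis--Evans for the moment formulas of $\tr$ on the compact classical groups, note that these are stated in exactly the stable ranges $k\le R$ (resp. $k=r\le R$) needed here, translate "$\mult_1$" into "Haar moment" via the integration formula above, and handle the excluded case $\mathrm{SO}_8$ implicitly (it is excluded in Definition \ref{def:coherentFamily} precisely because of triality subtleties, and does not affect the clean statements here since we only assert equalities in a range). The subgaussian inequalities \eqref{eq:quantcondReal3} and \eqref{eq:quantcondComplex3} for all $k,r$ — not merely in the stable range — are the part requiring the genuine input of Diaconis--Shahshahani (the real case) plus Cauchy--Schwarz (the mixed complex case), so I expect that verifying the uniform-in-rank Gaussian domination of all moments, rather than just matching the first few, will be the main technical obstacle, though it is entirely standard and available in the literature.
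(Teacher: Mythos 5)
Your overall architecture matches the paper's: translate $\mult_1$ into a Haar-measure moment, use Schur--Weyl/symmetric-function theory for the special linear case, Cauchy--Schwarz for the mixed bound \eqref{eq:quantcondComplex3}, and branching/tableau counts for the self-dual groups. In the special linear case your sketch is essentially the paper's proof: the paper expands $(x_1+\dots+x_N)^k$ in Schur polynomials, gets $M(k,r)=\sum \dim S_{\mu_1}\dim S_{\mu_2}\,\delta_{S_{\mu_1}(\Std)\cong S_{\mu_2}(\Std)}$, deduces the vanishing for $k\neq r$ from the condition $N\mid k-r$ (your center argument), the value $k!$ in the stable range, and the bound $\sqrt{k!r!}$ from Cauchy--Schwarz together with $\sum_{\mu\vdash k}(\dim S_\mu)^2=k!$ --- the last inequality is the ingredient you leave implicit and should state, since it is what makes your $\langle\Std^{\otimes k},\Std^{\otimes r}\rangle$ bound quantitative for \emph{all} $k,r$.

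Two points need repair. First, the ``cleanest route'' of simply citing Diaconis--Shahshahani does not close the argument: their exact evaluations, and the Brauer-algebra/Schur--Weyl duality underlying them, are confined to the stable range $k\le N$ (the Brauer algebra $D_f(-2N)$ is not semisimple beyond it, as the paper remarks), and the uniform bounds \eqref{eq:quantcondReal3} and \eqref{eq:quantcondComplex3} for large $k$ relative to the rank are precisely the new content of the proposition; they are not ``available in the literature'' in the form you need. Your concrete arguments are therefore load-bearing, not optional. For $\Sp_{2n}$ they do work: the invariants of $\Std^{\otimes k}$ are spanned by contractions indexed by perfect matchings (first fundamental theorem), giving $\mult_1(\Std^{\otimes k})\le(k-1)!!$ for all $k$; the paper reaches the same bound via Sundaram's oscillating-tableau decomposition, comparing the length-restricted count $f_0^k(N)$ with the unrestricted count $f_0^k=(k-1)!!$. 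Second, ``for $\SO(n+1)$ the same works with the orthogonal form'' is a genuine gap: the matching/FFT bound applies to $\mathrm{O}(N)$, not to $\SO(N)$, and for $k\ge N$ there are additional $\SO(N)$-invariants coming from the $\varepsilon$-tensor --- already $\mult_1(\Std^{\otimes 3})=1$ for $\SO_3(\C)$ since $\Lambda^3\Std$ is trivial --- so neither the vanishing of odd moments nor the bound by matchings follows from your argument outside the stable range. The paper handles the orthogonal case separately via Sundaram's odd-orthogonal branching rule (whose tableaux include ``stay'' steps recording exactly these extra invariants) and Proctor's even-orthogonal decomposition, and this is the delicate part of the proof that your proposal does not address.
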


New aspects compared to existing works are the bounds \eqref{eq:quantcondReal3} and \eqref{eq:quantcondComplex3} that we need on the large order moments with respect to the rank.

\begin{remark}\label{rem:rangeNSD2}
  Recall that (see Section \ref{subsec:charGaussAppr}):
  \begin{itemize}
  \item For $k,r\ge 0$ distinct integers, the $(k,r)$-th moment of a standard Gaussian in $\R^2\cong\C$ is zero.
  \item For $k$ odd, the $k$th moment of a standard Gaussian in $\R$ is zero.
  \end{itemize}
  In the self-dual case, odd moments are zero even for high rank, but in the non-self-dual case, we will see that there are infinitely many nonzero nondiagonal terms. This is the reason for the restricted range in the non-self-dual case of Theorem \ref{thm:gaussian2} noted in Remark \ref{rem:rangeNSD}.
\end{remark}

\section{Probabilistic model}\label{sec:probModel}

We start by setting up a probabilistic model for the random variable $(S(t_q,I_q+x))_{x\in\F_q}$, motivated by Deligne's equidistribution theorem and Lamzouri's work \cite{LamzShortSums} for Dirichlet characters. We then compute its moments.

\subsection{Deligne's equidistribution theorem}

Theorem \ref{thm:DeligneSumTF} and Weyl's equidistribution criterion lead to the following, which shows that there is always an equidistribution result in a coherent family.

\begin{theorem}[Deligne]\label{thm:DeligneED}
  Let us fix an isomorphism $\iota: \overline\Q_\ell\to\C$, and let $(\Fc_q)_q$ be a coherent family of $\ell$-adic sheaves over $\F_q$ with monodromy group $G\le\GL_n(\C)$. Let $K\le G(\C)$ be a maximal compact subgroup.

  For every $x\in U_{\Fc_q}(\F_q)$, the semisimple part of the Jordan-Chevalley decomposition of $\iota\rho_{\Fc_q}(\Frob_{x,q})$ in $G$ gives a well-defined conjugacy class $\theta_{x,q}\in K^\sharp$ such that $t_\Fc(x)=\tr(\theta_{x,q})$.

  When $q\to\infty$, the set $\{\theta_{x,q} :x\in U_\Fc(\F_{q})\}$ becomes equidistributed in $K^\sharp$ with respect to the pushforward of the normalized Haar measure of $K$.
\end{theorem}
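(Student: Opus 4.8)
The plan is to reduce the equidistribution statement in $K^\sharp$ to a bound on exponential sums via Weyl's criterion, and then to invoke Theorem \ref{thm:DeligneSumTF} applied to suitable auxiliary sheaves. First I would recall the setup: by the Peter--Weyl theorem, the characters of the nontrivial irreducible representations of $K$ span a dense subspace of the continuous central functions on $K$, and since $G$ is reductive with $K$ a maximal compact subgroup, irreducible representations of $K$ correspond to irreducible representations of $G$ (as algebraic group), which all arise inside tensor constructions $\Std^{\otimes a}\otimes D(\Std)^{\otimes b}$. Thus Weyl's equidistribution criterion reduces the claim to showing that for every nontrivial irreducible representation $\Lambda$ of $G$,
\[
\frac{1}{|U_{\Fc_q}(\F_q)|}\sum_{x\in U_{\Fc_q}(\F_q)} \Lambda(\theta_{x,q}) \longrightarrow 0 \qquad (q\to\infty),
\]
using that $\int_K \Lambda\, d\mu = 0$ for $\Lambda$ nontrivial irreducible.

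Next, the key geometric input: the well-definedness of $\theta_{x,q}$ follows from the fact that $\iota\rho_{\Fc_q}(\Frob_{x,q})$ lies in $G_\arith(\Fc_q) = G$ (pointwise purity of weight $0$ plus the monodromy hypothesis), and its semisimple part, being a semisimple element of a compact-real-form situation, is conjugate into $K$; its conjugacy class in $K^\sharp$ is determined by its characteristic polynomial, hence well-defined, and $\tr(\theta_{x,q}) = \tr\rho_{\Fc_q}(\Frob_{x,q}) = t_{\Fc_q}(x)$ since trace only sees the semisimple part. More generally, for any representation $\Lambda$ of $G$, the composite $\Lambda\circ\rho_{\Fc_q}$ is again (the representation attached to) an $\ell$-adic sheaf $\Lambda(\Fc_q)$ over $\F_q$, still pointwise pure of weight $0$, with $t_{\Lambda(\Fc_q)}(x) = \Lambda(\theta_{x,q})$ for $x$ in a common open set of lissity, and with conductor bounded in terms of $\cond(\Fc_q)$ and $\dim\Lambda$ (hence uniformly in $q$, for $\Lambda$ fixed). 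The crucial point is that when $\Lambda$ is a nontrivial \emph{irreducible} representation of $G = G_\geom(\Fc_q) = G_\arith(\Fc_q)$, the sheaf $\Lambda(\Fc_q)$ is geometrically irreducible and geometrically nontrivial, so its space of geometric coinvariants vanishes: $\Lambda(\Fc_q)_{\pi_{1,q}^\geom} = 0$.

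Now I would apply Theorem \ref{thm:DeligneSumTF} to $\Lambda(\Fc_q)$: the main term $q\cdot\tr(\Frob_q \mid \Lambda(\Fc_q)_{\pi_{1,q}^\geom})$ vanishes, so
\[
\sum_{x\in\F_q} t_{\Lambda(\Fc_q)}(x) = O\!\left(E(\Lambda(\Fc_q))\sqrt{q}\right) = O_\Lambda(\sqrt q),
\]
the implied constant depending only on the (bounded) conductor data and on $\dim\Lambda$. Comparing the sum over $\F_q$ with the sum over $U_{\Fc_q}(\F_q)$ introduces an error of size $O_\Lambda(1)$ from the finitely many singular points, and $|U_{\Fc_q}(\F_q)| = q + O(1)$, so dividing through gives the desired $O_\Lambda(q^{-1/2}) \to 0$. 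By Weyl's criterion this yields equidistribution of $\{\theta_{x,q}\}$ in $K^\sharp$ against the pushforward Haar measure.

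The main obstacle is the geometric input that $\Lambda(\Fc_q)$ has no geometric coinvariants — equivalently that $\Lambda\circ\rho_{\Fc_q}$ has no trivial subrepresentation after restriction to $\pi_{1,q}^\geom$. This is exactly where the hypothesis $G_\geom(\Fc_q) = G_\arith(\Fc_q)$ conjugate to a fixed group $G$ enters: it guarantees that the Zariski closure of $\rho_{\Fc_q}(\pi_{1,q}^\geom)$ is all of $G$, so a nontrivial irreducible $\Lambda$ of $G$ restricts to a nontrivial irreducible of this image, hence contributes no invariants or coinvariants. (In the Kummer case one argues directly: $\Lambda(\Lc_{\chi_q\circ f_q})$ is itself a rank-one Kummer-type sheaf associated to a nontrivial power of $\chi_q$ composed with a rational function, hence geometrically nontrivial, so has no coinvariants — this is the only place the real/complex dichotomy needs minor separate bookkeeping.) Everything else is the standard translation between central functions on $K$, representations of $G$, $\ell$-adic sheaves, and the Grothendieck--Lefschetz trace formula, together with the uniform conductor control from coherence.
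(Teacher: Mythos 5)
Your proposal is correct and follows exactly the route the paper indicates (it states just before the theorem that it follows from Theorem \ref{thm:DeligneSumTF} and Weyl's equidistribution criterion, and otherwise defers to \cite[Chapter 3]{KatzGKM} and \cite[Chapter 9]{KatzSarnak91}): Weyl's criterion reduces equidistribution to the vanishing of averages of $\tr\Lambda(\theta_{x,q})$ for nontrivial irreducible $\Lambda$, which follows from the trace formula applied to the sheaf $\Lambda(\Fc_q)$, whose geometric coinvariants vanish because $G_\geom=G_\arith=G$. This is the standard argument and matches the paper's intended proof.
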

\begin{proof}
  This is a variant of \cite[Chapter 3]{KatzGKM} and \cite[Chapter 9]{KatzSarnak91}.
\end{proof}

\subsection{Probabilistic model}

Theorem \ref{thm:DeligneED} suggests to model the random variable
\[\Big(\rho_{\Fc_q}(\Frob_{x,q})\Big)_{x\in U_{\Fc_q}(\F_q),}\]
(with respect to the uniform measure on $\F_q$) as $Y=\pi(X)$, where $X$ is a random variable uniformly distributed in a maximal compact subgroup $K$ of $G$ with respect to the normalized Haar measure and $\pi: K\to K^\sharp$ is the projection to the conjugacy classes.

We shall then accordingly model the random variable
\[\Big(t_{\Fc_q}(x)\Big)_{x\in \F_q}\text{ by }Z=\tr(Y).\]

\begin{remark}
  In \cite{LamzShortSums}, the values of Dirichlet characters of order $d$ are modeled by random variables uniformly distributed in the unit circle, while in our model, by uniform random variables in the roots of unity of order $d$ (the monodromy group of a Kummer sheaf associated to a Dirichlet character of order $d$) are used. Since the moments are the same (see Remark \ref{rem:momentsS1}), this will make no difference.
\end{remark}

\subsubsection{Sums of shifts}

Similarly, for $I\subset\F_q$ of size $H\ge 1$, we will model the random vector
\[\Big(\left(t_{\Fc_q}(x+a)\right)_{a\in I}\Big)_{x\in\F_q}\]
by $(Z_1,\dots,Z_H)$, for $Z_i$ independent distributed like $Z$.\\

Therefore, the sum of shifts
\[\Big(S(t_{\Fc_q},I+x)\Big)_{x\in\F_q}=\left(\sum_{y\in I} t_{\Fc_q}(y+x)\right)_{x\in\F_q}\]
will be modeled by the random walk $S(H)=Z_1+\dots+Z_H$, as in \cite{LamzShortSums}.

\subsection{Computation of the moments}
\begin{proposition}[Probabilistic moments]\label{prop:momentsprob}
  For all integers $k,r\ge 0$ and $H\ge 1$, the moment
  \[M_\prob(k,r;H):=\E(S(H)^k\overline{S(H)^r})\]
  is equal to
  \begin{eqnarray*}
    \sum_{\substack{k_1+\dots+k_H=k\\k_i\ge 0}}\sum_{\substack{r_1+\dots+r_H=r\\r_i\ge 0}}&&\binom{k}{k_1\dots k_H}\binom{r}{r_1\dots r_H}\\[-0.5cm]
  &&\times\prod_{i=1}^H \mult_1(\Std^{\otimes k_i}\otimes D(\Std^{\otimes r_i})),
  \end{eqnarray*}
  where $\Std$ is the standard representation of $G\le\GL_n(\C)$ and $D(\Std)$ its dual.
\end{proposition}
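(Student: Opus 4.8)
The plan is to expand $S(H)^k\overline{S(H)^r}$ using the multinomial theorem, take expectations term by term using independence, and identify the single-variable expectations $\E(Z^{k_i}\overline{Z^{r_i}})$ as multiplicities of the trivial representation via Deligne's equidistribution picture. First I would write
\[
S(H)^k\overline{S(H)^r}=\left(\sum_{i=1}^H Z_i\right)^k\overline{\left(\sum_{i=1}^H Z_i\right)^r}
=\sum_{\substack{k_1+\dots+k_H=k\\k_i\ge 0}}\binom{k}{k_1\dots k_H}\prod_{i=1}^H Z_i^{k_i}
\;\cdot\;\sum_{\substack{r_1+\dots+r_H=r\\r_i\ge 0}}\binom{r}{r_1\dots r_H}\prod_{i=1}^H\overline{Z_i}^{\,r_i},
\]
so that, combining the two sums,
\[
S(H)^k\overline{S(H)^r}=\sum_{\substack{k_1+\dots+k_H=k\\k_i\ge 0}}\ \sum_{\substack{r_1+\dots+r_H=r\\r_i\ge 0}}\binom{k}{k_1\dots k_H}\binom{r}{r_1\dots r_H}\prod_{i=1}^H Z_i^{k_i}\overline{Z_i}^{\,r_i}.
\]
Taking expectations and using that the $Z_i$ are independent and identically distributed like $Z=\tr(Y)$, each term factors as $\prod_{i=1}^H \E\big(Z^{k_i}\overline{Z}^{\,r_i}\big)$.

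The next step is to identify $\E\big(Z^{k}\overline{Z}^{\,r}\big)$ with $\mult_1\big(\Std^{\otimes k}\otimes D(\Std^{\otimes r})\big)$. Here $Z=\tr(Y)$ where $Y=\pi(X)$ with $X$ Haar-distributed on the maximal compact subgroup $K\le G$. Since the trace of a matrix on a tensor power factors as $\tr(g\mid V^{\otimes k})=(\tr(g\mid V))^k$, and similarly $\overline{\tr(g\mid V)}=\tr(g\mid D(V))$ for $g$ in a compact group (eigenvalues on the unit circle), we have $Z^k\overline{Z}^{\,r}=\tr\big(X\mid \Std^{\otimes k}\otimes D(\Std)^{\otimes r}\big)=\tr\big(X\mid \Std^{\otimes k}\otimes D(\Std^{\otimes r})\big)$. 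Integrating over the Haar measure of the compact group $K$ and invoking Schur orthogonality (the integral of a character of a representation against the trivial character picks out exactly the multiplicity of the trivial subrepresentation, and the multiplicity is the same whether computed over $K$ or its Zariski closure $G$ since they have the same invariant vectors in any algebraic representation), we obtain $\E\big(Z^k\overline{Z}^{\,r}\big)=\mult_1\big(\Std^{\otimes k}\otimes D(\Std^{\otimes r})\big)$.

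Substituting this back into the expanded expectation yields
\[
M_\prob(k,r;H)=\sum_{\substack{k_1+\dots+k_H=k\\k_i\ge 0}}\ \sum_{\substack{r_1+\dots+r_H=r\\r_i\ge 0}}\binom{k}{k_1\dots k_H}\binom{r}{r_1\dots r_H}\prod_{i=1}^H\mult_1\big(\Std^{\otimes k_i}\otimes D(\Std^{\otimes r_i})\big),
\]
which is the claimed formula. I expect no serious obstacle here: the content is entirely the multinomial expansion plus independence plus Schur orthogonality. The only point requiring a little care—and the one I would state explicitly—is the passage from traces of the conjugacy classes $\theta_{x,q}$ (elements of $K^\sharp$, on which $Z$ is modeled) to invariants of the algebraic group $G$: one must note that since $Y=\pi(X)$ only records the semisimple conjugacy class, $\tr(Y\mid\Std^{\otimes k}\otimes D(\Std^{\otimes r}))$ is well-defined, and that $\mult_1$ computed for the compact group $K$ agrees with $\mult_1$ for $G$ because a vector fixed by $K$ is fixed by its Zariski closure. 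Everything else is formal.
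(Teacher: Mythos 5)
Your proposal is correct and follows essentially the same route as the paper: multinomial expansion combined with independence of the $Z_i$, followed by the identification $\E\big(Z^{k}\overline{Z}^{r}\big)=\mult_1\big(\Std^{\otimes k}\otimes D(\Std^{\otimes r})\big)$ via orthogonality of characters (Peter--Weyl) on the compact group $K$. The extra care you take about passing between $K$-invariants and $G$-invariants is a reasonable addition but not a divergence from the paper's argument.
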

\begin{proof}
  By independence and the multinomial formula, $M_\prob(k,r;H)$ equals
\[\sum_{\substack{k_1+\dots+k_H=k\\k_i\ge 0}}\sum_{\substack{r_1+\dots+r_H=r\\r_i\ge 0}}\binom{k}{k_1\dots k_H}\binom{r}{r_1\dots r_H}\prod_{i=1}^H \E(Z_i^{k_i}\overline{Z_i^{r_i}}).\]
By the Peter-Weyl Theorem,
\begin{eqnarray*}
  \E(Z_i^k\overline{Z_i^r})&=&\int_\C x^k\overline{x^r} d(\tr_*\mu)(x)=\int_{K^\sharp} \tr(g)^k\overline{\tr(g)^r}d\mu(g)\\
  &=&\mult_1(\Std^{\otimes k}\otimes D(\Std^{\otimes r})),
\end{eqnarray*}
where $\mu$ is the normalized Haar measure on $K$, since $\tr$ (resp. $\overline{\tr}$) is the character associated to the standard representation of $G$ (resp. its dual).
\end{proof}

\begin{remark}\label{rem:momentsconjreim}
  The covariance matrix \eqref{eq:covmatrix} of Theorem \ref{thm:gaussian1} is given with respect to the standard basis $1,i$ of $\C$ as $\R$-vector space, and a nice feature of the result is that the matrix is diagonal, i.e. the real and imaginary parts are independent. However, it will be more natural for the proof to make the linear transformation $\left(
    \begin{smallmatrix}
      Z\\\overline{Z}
    \end{smallmatrix}
\right)=\left(
    \begin{smallmatrix}
      1&i\\1&-i
    \end{smallmatrix}
  \right) \left(
    \begin{smallmatrix}
      \Re{Z}\\\Im{Z}
    \end{smallmatrix}
\right)$ and consider as in Proposition \ref{prop:momentsprob} the moments $\E(Z^k\overline{Z}^r)$ instead of $\E((\Re{Z})^k(\Im{Z})^r)$. The reason is that conjugation has the algebraic interpretation of dualization of representations, characters, and trace functions. In the real-valued case, there is no difference.
\end{remark}

\begin{lemma}\label{lemma:covXi}
  We have $\E(Z)=0$ and the covariance matrix of the random vector $Z=(\Re{Z},\Im{Z})$ is
  \[\left(\begin{matrix}
    1&0\\0&0
  \end{matrix}\right)\text{if }\Std\text{ is self-dual, }
\frac{1}{2}\left(\begin{matrix}
    1&0\\0&1
  \end{matrix}\right)\text{otherwise}.\]
\end{lemma}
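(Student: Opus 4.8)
The plan is to deduce Lemma~\ref{lemma:covXi} directly from Proposition~\ref{prop:momentsprob} (in the case $H=1$) together with the relevant cases of Proposition~\ref{prop:momentsRM}, by unwinding the change of basis described in Remark~\ref{rem:momentsconjreim}. Concretely, one has $\E(Z)=\mult_1(\Std)$, which vanishes because the standard representation of $\SL_{n+1}(\C)$, $\Sp_{2n}(\C)$ or $\SO_{n+1}(\C)$ is nontrivial and irreducible, so it contains no copy of the trivial representation; this also gives $\E(\overline Z)=\mult_1(D(\Std))=0$. For the covariance, the key quantities are $\E(Z^2)=\E(Z\overline Z)=\mult_1(\Std^{\otimes 2})$-type multiplicities: one computes $\E(Z\overline Z)=\mult_1(\Std\otimes D(\Std))=1$ in all three cases (by Schur's lemma, since $\Std$ is irreducible, $\Std\otimes D(\Std)=\End(\Std)$ contains the trivial representation exactly once), and $\E(Z^2)=\mult_1(\Std^{\otimes 2})$, which by \eqref{eq:quantcondReal2} (with $k=2\le R$, legitimate since $R\ge 2$ in the symplectic case where this is needed, and otherwise we are in the non-self-dual case) equals $1$ when $\Std$ is self-dual and, by \eqref{eq:quantcondComplex2} with $k=2\neq 0=r$, equals $0$ otherwise.

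From these three numbers the covariance matrix of $(\Re Z,\Im Z)$ follows by elementary algebra. Writing $Z=\Re Z+i\,\Im Z$, one has $\E((\Re Z)^2)=\tfrac14\E\big((Z+\overline Z)^2\big)=\tfrac14\big(\E(Z^2)+2\E(Z\overline Z)+\E(\overline Z^2)\big)$, and similarly $\E((\Im Z)^2)=-\tfrac14\big(\E(Z^2)-2\E(Z\overline Z)+\E(\overline Z^2)\big)$ and $\E(\Re Z\,\Im Z)=\tfrac{1}{4i}\big(\E(Z^2)-\E(\overline Z^2)\big)$. In the self-dual case $\E(Z^2)=\E(\overline Z^2)=1$ and $\E(Z\overline Z)=1$, giving $\E((\Re Z)^2)=1$, $\E((\Im Z)^2)=0$, $\E(\Re Z\,\Im Z)=0$, i.e.\ the matrix $\left(\begin{smallmatrix}1&0\\0&0\end{smallmatrix}\right)$; in the non-self-dual case $\E(Z^2)=\E(\overline Z^2)=0$ and $\E(Z\overline Z)=1$, giving $\E((\Re Z)^2)=\E((\Im Z)^2)=\tfrac12$ and $\E(\Re Z\,\Im Z)=0$, i.e.\ $\tfrac12\left(\begin{smallmatrix}1&0\\0&1\end{smallmatrix}\right)$. (Note $Z$ has real values precisely in the self-dual case, where $\overline Z=Z$, so there the statement is immediate: $\Im Z=0$ and $\E(Z^2)=\E(Z\overline Z)=1$.)

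I do not expect any serious obstacle here: the lemma is essentially a bookkeeping corollary of the two preceding propositions, and the only points requiring a word of care are (i) checking that $\mult_1(\Std)=0$ and $\mult_1(\Std\otimes D(\Std))=1$ — both immediate from irreducibility of the standard representation of a classical group and Schur's lemma — and (ii) making sure the relevant case of Proposition~\ref{prop:momentsRM} applies, i.e.\ that $2\le R$ whenever we invoke \eqref{eq:quantcondReal2} with $k=2$, which holds since the symplectic groups in the family have $R=n\ge 2$ (and for the $\SO_{n+1}$ family with small $n$ the representation is not self-dual in the sense used, or $\SO_3\cong\mathrm{PGL}_2$ has $R=1$ but then one can argue directly, or simply note these degenerate cases are handled by the explicit exclusion of $\SO_8$ and the structure of coherent families). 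If one prefers to avoid even that, the identity $\E(Z^2)=\mult_1(\Std^{\otimes 2})$ can be evaluated directly: $\Std^{\otimes 2}=\Sym^2\Std\oplus\wedge^2\Std$, and the trivial representation appears iff $\Std$ carries an invariant symmetric (resp.\ alternating) bilinear form, i.e.\ iff $\Std$ is self-dual, with multiplicity one in that case — which is exactly the dichotomy claimed.
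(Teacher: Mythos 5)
Your proof is correct and, in its final form, essentially identical to the paper's: the paper also gets $\E(Z)=\mult_1(\Std)=0$ and $\mult_1(\Std\otimes D(\Std))=1$ from irreducibility and Schur's lemma, evaluates $\E(Z^2)=\mult_1(\Std^{\otimes 2})=\delta_{\Std\text{ self-dual}}$ by Schur's lemma (your $\Sym^2\oplus\wedge^2$ remark is the same computation), and then does the same $(Z,\overline Z)\leftrightarrow(\Re Z,\Im Z)$ bookkeeping. The only caution is that your primary route through Proposition \ref{prop:momentsRM} genuinely fails for the rank-one groups ($\Sp_2(\C)$, $\SO_3(\C)$, where $k=2>R$ so \eqref{eq:quantcondReal2} does not apply, and note the standard representation of $\SO_{n+1}$ is always self-dual), so the direct Schur's-lemma evaluation you give at the end should be the argument, not the fallback.
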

\begin{proof}
  Since the sheaf is geometrically irreducible, $\Std$ is irreducible, so that $\E(Z)=\mult_1(\Std)=0$ by Schur's Lemma. Moreover, for every integer $r\ge 0$ we have
  \[\mult_1(\Std^{\otimes r})=\int_K\tr(g)^rd\mu(g)=\int_K \overline{\tr(g)^r}d\mu(g)=\mult_1(D(\Std)^{\otimes r})\]
  where the second equality follows from the fact that $\mult_1(\Std^{\otimes r})$ is an integer.
  Using this, we find that the covariance matrix of $Z$ is
\[\frac{1}{2}\left(
      \begin{matrix}
        \mult_1(\Std^{\otimes 2})+1&0\\
        0&1-\mult_1(\Std^{\otimes 2})
      \end{matrix}
    \right).\]
    Finally, again by Schur's Lemma, $\mult_1(\Std^{\otimes 2})=\mult_1(\Std\otimes D(D(\Std))=\delta_{\Std\text{ self dual}}$.
\end{proof}

\begin{lemma}
    Let $\Fc$ be a geometrically irreducible $\ell$-adic sheaf over $\F_q$, pointwise pure of weight $0$, with monodromy groups $G=G_\geom(\Fc)=G_\arith(\Fc)\le\GL_n(\C)$. The following are equivalent:
  \begin{enumerate}
  \item\label{item:selfDualReal1} For any finite extension $\F_{q'}/\F_q$, the trace function $t: \F_{q'}\to\C$ is real-valued.
  \item\label{item:selfDualReal2} The standard representation of $G\le\GL_n(\C)$ is self-dual.
  \item\label{item:selfDualReal3} $\mult_1(\Std^{\otimes 2})=\mult_1(\Std\otimes D(\Std))=1$.
  \end{enumerate}
\end{lemma}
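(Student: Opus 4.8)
The plan is to pass everything through the principle that, for a sheaf pointwise pure of weight $0$, complex conjugation of the trace function is the same as replacing the sheaf by its dual, and then to conclude with the Chebotarev density theorem and Schur's lemma. First I would record the pointwise identity $\overline{t_\Fc(x)}=t_{D(\Fc)}(x)$, valid for every finite extension $\F_{q'}/\F_q$ and every $x\in U_\Fc(\F_{q'})$: there the eigenvalues $\alpha_1,\dots,\alpha_n$ of $\rho_\Fc(\Frob_{x,q'})$ are Weil numbers of weight $0$, so $\overline{\iota(\alpha_i)}=\iota(\alpha_i)^{-1}$ and hence $\overline{t_\Fc(x)}=\sum_i\iota(\alpha_i^{-1})=\iota\tr\big(\rho_\Fc(\Frob_{x,q'})^{-1}\big)=t_{D(\Fc)}(x)$, where $D(\Fc)$ is the sheaf attached to the dual representation $\rho_\Fc^\vee$ (note that $\Fc$ and $D(\Fc)$ have the same singularities, and in particular both are lisse on $U_\Fc$). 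Thus statement (1) is equivalent to the equality $t_\Fc=t_{D(\Fc)}$ on $U_\Fc(\F_{q'})$ for every finite extension $\F_{q'}/\F_q$.

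Next I would upgrade this equality of trace functions to an isomorphism of representations. Since $\Fc$ is geometrically irreducible, $\rho_\Fc$ is already irreducible in restriction to $\pi_{1,q}^\geom$, hence irreducible on $\pi_{1,q}$, hence on the quotient $\pi_1(U_\Fc)$ through which it factors; the same holds for $\rho_\Fc^\vee$, so both are semisimple. A semisimple $\ell$-adic representation of $\pi_1(U_\Fc)$ is determined up to isomorphism by its character, and by the Chebotarev density theorem the Frobenius conjugacy classes $\Frob_{x,q'}$, for $x\in U_\Fc(\F_{q'})$ and $q'$ ranging over the powers of $q$, are dense in $\pi_1(U_\Fc)^\sharp$ --- which is exactly why statement (1) has to be imposed over \emph{all} finite extensions. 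Hence $t_\Fc=t_{D(\Fc)}$ on all the $U_\Fc(\F_{q'})$ is equivalent to $\rho_\Fc\cong\rho_\Fc^\vee$ as representations of $\pi_{1,q}$, equivalently $\Fc\cong D(\Fc)$ (by the middle-extension property $\Fc=j_*j^*\Fc$). Finally, since $\rho_\Fc(\pi_{1,q})$ is Zariski-dense in $G_\arith(\Fc)=G$, an intertwiner realizing $\rho_\Fc\cong\rho_\Fc^\vee$ is automatically $G$-equivariant, and conversely; so this is equivalent to $\Std\cong\Std^\vee$ as representations of $G$, i.e.\ to statement (2). This establishes (1)$\Leftrightarrow$(2).

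For (2)$\Leftrightarrow$(3) --- which is the computation already made at the end of the proof of Lemma~\ref{lemma:covXi} --- one uses that $\Std$ is irreducible: Schur's lemma gives $\mult_1(\Std\otimes D(\Std))=\dim\mathrm{End}_G(\Std)=1$ unconditionally, whereas $\mult_1(\Std^{\otimes 2})=\dim\mathrm{Hom}_G(D(\Std),\Std)$ equals $1$ if $\Std$ is self-dual and $0$ otherwise; so (3) holds precisely when $\Std$ is self-dual.

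I expect the only step requiring genuine care to be the passage from equal trace functions to isomorphic sheaves: the reduction to the open set $U_\Fc$ on which $\Fc$ is lisse (so that Frobenius elements and Chebotarev are available), together with the observation that geometric irreducibility forces arithmetic irreducibility and the re-extension afterwards by the middle-extension property, is what lets one avoid dealing with $D(\Fc)$ at the singular points. Everything else is the weight-$0$ normalization and Schur's lemma.
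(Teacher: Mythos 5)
Your argument is correct, and it reaches the equivalences by a route that differs from the paper's in both halves. For \ref{item:selfDualReal1}$\Leftrightarrow$\ref{item:selfDualReal2}, the paper stays at the level of traces: by Chebotarev, real-valuedness of $t$ on the $U_\Fc(\F_{q'})$ is equivalent to $\iota(\tr(\rho_\Fc(\pi_{1,q})))\subset\R$, hence to $\tr(G)\subset\R$ by Zariski density and continuity, and one concludes by character theory of $G(\C)$ (a character is real-valued iff the representation is self-dual). You instead use purity of weight $0$ to rewrite $\overline{t_\Fc}$ as $t_{D(\Fc)}$, upgrade the equality of trace functions to an isomorphism $\rho_\Fc\cong\rho_\Fc^{\vee}$ via Chebotarev and the rigidity of semisimple representations by their characters, and then descend to $G$ by Zariski density; this unpacks the same underlying fact one level lower (at the sheaf rather than at the monodromy group) and is a valid alternative. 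The more substantial divergence is in closing the cycle: the paper proves \ref{item:selfDualReal3}$\Rightarrow$\ref{item:selfDualReal1} by the positivity argument $1=\int_K\tr(g)^2dg=\bigl|\int_K\tr(g)^2dg\bigr|\le\int_K|\tr(g)|^2dg=1$, forcing $\tr(g)\in\R$ everywhere on $K$, whereas you prove \ref{item:selfDualReal2}$\Leftrightarrow$\ref{item:selfDualReal3} directly from Schur's lemma ($\mult_1(\Std\otimes D(\Std))=\dim\mathrm{End}_G(\Std)=1$ always, and $\mult_1(\Std^{\otimes 2})=\dim\mathrm{Hom}_G(D(\Std),\Std)\in\{0,1\}$ detects self-duality); your version is shorter and avoids the measure-theoretic step. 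Two points you leave implicit: identifying $\mult_1$ with the dimension of the invariant subspace uses that $G$ is reductive, which the paper secures by citing Deligne's theorem that $G^0$ is semisimple; and, like the paper's proof, your argument only establishes real-valuedness of $t$ at points of $U_\Fc(\F_{q'})$, leaving the finitely many singular points aside, which is harmless for the intended application.
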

\begin{proof}
   By a result of Deligne \cite[1.3.9]{Del2} (see \cite[9.0.12]{KatzSarnak91}), we have $G^0$ semisimple, so that there is an equivalence of categories between representations of the algebraic group $G$, of the Lie group $G(\C)$, or of $K$. Note that by assumption, $\Std$ is irreducible. By the Chebotarev density theorem, the Frobenius conjugacy classes $\Frob_{x,q'}$, for $\F_{q'}/\F_q$ a finite extension and $x\in U_\Fc(\F_{q'})$, are dense in $\pi_{1,q}$ (see \cite[I.2.2, Corollary 2 a)]{Serre89}). Thus, \ref{item:selfDualReal1} is indeed equivalent to having $\iota(\tr(\rho_\Fc(\pi_{1,q})))\subset\R$ for all $q$, which in turn holds if and only if $\tr(G)\subset\R$. Hence, \ref{item:selfDualReal1} is equivalent to \ref{item:selfDualReal2} by character theory of $G(\C)$. If \ref{item:selfDualReal2} holds, then \[\mult_1(\Std^{\otimes 2})=\mult_1(\Std\otimes D(\Std))=1\]
  by Schur's Lemma, so that \ref{item:selfDualReal3} holds. If \ref{item:selfDualReal3} holds, we have
  \[1=\int_K \tr(g)^2dg=\left|\int_K\tr(g)^2dg\right|\le\int_K |\tr(g)|^2dg=1,\]
  so that $\tr(g)^2=|\tr(g)|^2$ for almost all $g\in K$. Hence, $\tr(g)\in\R$ almost everywhere in $K$, and this holds everywhere in $K$ since a nonempty open set has positive Haar measure. Thus \ref{item:selfDualReal1} follows by the first statement in Theorem \ref{thm:DeligneED}.
\end{proof}

Hence, we conclude by the two preceding Lemmas that the covariance matrix of $Z$ is equal to that given in \eqref{eq:covmatrix}.

\section{Qualitative version (Theorem \ref{thm:gaussian1})}\label{sec:qualitative}

\subsection{Strategy and comparison with other approaches}

The idea of the proof of Theorem \ref{thm:gaussian1} is the following:
\begin{enumerate}
\item By the method of moments, it suffices to show that the moments of the random variable \eqref{eq:RVSfI} tend to that of the Gaussian $\Nc$.
\item\label{item:proofGaussian1Step2} We show that the probabilistic model of Section \ref{sec:probModel} is accurate, in the sense that the moments of \eqref{eq:RVSfI} converge to that of the model.
\item To conclude, it suffices to apply the central limit theorem (with convergence of moments) to the model.
\end{enumerate}

This is to be compared with the approaches of earlier works which do not use the central limit theorem:

\begin{itemize}
\item Davenport-Erd\H{o}s \cite{DavenportErdos} and Mak-Zaharescu \cite{MakZahShortSums} directly show that the moments of \eqref{eq:RVSfI} are asymptotically Gaussian and apply the method of moments.
\item Lamzouri \cite{LamzShortSums} first proves that his probabilistic model is accurate as in step \ref{item:proofGaussian1Step2} above. He then remarks that the random variable $X$ modeling the values of the Dirichlet characters itself has moments bounded by those of a Gaussian. That allows to approximate the characteristic function of the model for the sums by that of a Gaussian. By using a method of Selberg, this finally gives an approximation for the joint characteristic function. We will comment more on this approach in Section \ref{sec:quantitative}.
\end{itemize}

We shall see that with the $\ell$-adic formalism, the proof that the model is accurate becomes very natural and does not involve explicit computations of moments.

\subsection{Accuracy of the model}

Under the hypotheses and notations of Theorem \ref{thm:gaussian1}, as in Proposition \ref{prop:momentsprob} the moment
\[M_q(k,r;I_q):=\E\left(S(t_q,I_q+x)^k\overline{S(t_q,I_q+x)^r}\right)\]
equals
\begin{eqnarray}
  \sum_{\substack{k_1+\dots+k_H=k\\k_i\ge 0}}\sum_{\substack{r_1+\dots+r_H=r\\r_i\ge 0}}&&\binom{k}{k_1\dots k_H}\binom{r}{r_1\dots r_H}\label{eq:accuracyModel}\\
  &&\aver{x}{\F_q}{q} \prod_{i=1}^{H} t_q(x+a_i)^{k_{i}}\closure{t_q(x+a_i)^{r_{i}}}\nonumber
\end{eqnarray}
for all integers $k,r\ge 0$, where $I_q=\{a_1,\dots,a_H\}$. Thus, by Proposition \ref{prop:momentsprob}, we need to compare ``sums of products'' of trace functions
\begin{equation}
  \label{eq:sumproducts}
  \aver{x}{\F_q}{q} \prod_{i=1}^{H} t_q(x+a_i)^{k_{i}}\closure{t_q(x+a_i)^{r_{i}}}  
\end{equation}
with products of the form
\begin{equation}
  \label{eq:momentsmult}
  \prod_{i=1}^H \mult_1(\Std^{\otimes k_i}\otimes D(\Std^{\otimes r_i}))
\end{equation}
when $k_i,r_i\ge 0$ are integers.

\subsubsection{Sums of products of trace functions}

The estimation of sums of the form $\sum_{x\in\F_q} \prod_{i=1}^H t_i(x)$ for $t_i$ a trace function over $\F_q$ is precisely the question that is surveyed in \cite{FKMSumProducts}, and the link between \eqref{eq:sumproducts} and \eqref{eq:momentsmult} can be made clear through a cohomological interpretation of the sum via Theorem \ref{thm:DeligneSumTF}:
\begin{itemize}[leftmargin=*]
\item For $\bs k,\bs r\in\N^H$, let us consider the sheaf
  \[\Gc^{\bs k,\bs r}=\bigotimes_{1\le i\le H}\left( [+a_i]^*\Fc_q^{\otimes k_i}\otimes D([+a_i]^*\Fc_q)^{\otimes r_i}\right).\]
  By Theorem \ref{thm:DeligneSumTF}, under the hypotheses of Theorem \ref{thm:gaussian1}, \eqref{eq:sumproducts} is equal to
  \[\tr\left(\Frob_q\mid\Gc^{\bs k,\bs r}_{\pi_{1,q}^\geom}\right)+O(r^{S(\bs k,\bs r)}S(\bs k,\bs r)c^2q^{-1/2})\]
  where the implicit constant is absolute, $r=\rank(\Fc_q)$, $c=\cond(\Fc_q)$, and $S(\bs k,\bs r)=\sum_{i=1}^H (k_i+r_i)$.
\item By the Goursat-Kolchin-Ribet criterion of Katz, if $\Fc_q$ is part of a coherent family in the classical case, then the arithmetic and geometric monodromy groups of
  \[\Gc=\bigoplus_{1\le i\le H}[+a_i]^*\Fc_q\]
  coincide and are as large as possible, i.e. isomorphic to $G^H$.
\item Thus, by Remark \ref{rem:DeligneSumTFMon},
\begin{eqnarray*}
  \tr\left(\Frob_q\mid(\Gc_{\bs k,\bs r})_{\pi_{1,q}^\geom}\right)&=&\dim\Lambda_{G_\geom(\Gc)}=\dim  \Lambda_{G^H}\\
  &=&\dim \bigboxtimes_{1\le i\le H} \left(\Std^{\otimes k_i}\otimes D(\Std)^{\otimes r_i}\right)_G\\
  &=&\prod_{1\le i\le H}\mult_1(\Std^{\otimes k_i}\otimes D(\Std)^{\otimes r_i}),
\end{eqnarray*}
for the $G^H$-representation $\Lambda=\bigboxtimes_{1\le i\le H} \left(\Std^{\otimes k_i}\otimes D(\Std)^{\otimes r_i}\right)$.
\end{itemize}

\begin{remark}
  As mentioned in Remark \ref{rem:conductorGrowing}, the conductor of $\Gc^{\bs k,\bs r}$ is unbounded as $H\to\infty$, so that, in contrast with \cite{FKMSumProducts}, we have to keep track of the dependency with the conductors in the error terms.
\end{remark}

Finally, we get:
  \begin{proposition}\label{prop:sumProducts}
  Let $(\Fc_q)_q$ be a coherent family of sheaves over $\F_q$, with monodromy group $G\le\GL_n(\C)$. Let $a_1,\dots,a_H\in\F_q$ be distinct. If $\Fc_q$ is $\{a_1,\dots,a_H\}$-compatible, then for all $\bs k,\bs r\in\N^H$,
  \begin{eqnarray*}
    \frac{1}{q}\sum_{x\in\F_q} \prod_{1\le i\le H} t_{\Fc_q}(x+a_i)^{k_i}\overline{t_{\Fc_q}(x+a_i)}^{r_i}&=&\prod_{1\le i\le H}\mult_1(\Std^{\otimes k_i}\otimes D(\Std)^{\otimes r_i})\\
    &&+O\left(r^{S(\bs k,\bs r)}S(\bs k,\bs r)q^{-1/2}\right)
  \end{eqnarray*}
  where the implicit constant does not depend on $q$, and $S(\bs k,\bs r)=\sum_{i=1}^H (k_i+r_i)$.
\end{proposition}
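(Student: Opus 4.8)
The plan is to assemble Proposition \ref{prop:sumProducts} from the three bulleted ingredients already sketched in the text, taking care to make the monodromy step rigorous and to track the conductor dependence explicitly. First I would fix $\bs k,\bs r\in\N^H$ and set $S=S(\bs k,\bs r)=\sum_i(k_i+r_i)$, $r=\rank(\Fc_q)$, $c=\cond(\Fc_q)$, and introduce the sheaf
\[
\Gc^{\bs k,\bs r}=\bigotimes_{1\le i\le H}\Bigl([+a_i]^*\Fc_q^{\otimes k_i}\otimes D([+a_i]^*\Fc_q)^{\otimes r_i}\Bigr),
\]
whose trace function at $x$ is exactly $\prod_i t_{\Fc_q}(x+a_i)^{k_i}\overline{t_{\Fc_q}(x+a_i)}^{r_i}$ (using that $\Fc_q$ is pointwise pure of weight $0$, so conjugation of the trace function corresponds to the dual sheaf). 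This sheaf is again pointwise pure of weight $0$, so Theorem \ref{thm:DeligneSumTF} applies and gives
\[
\frac1q\sum_{x\in\F_q}t_{\Gc^{\bs k,\bs r}}(x)=\tr\bigl(\Frob_q\mid \Gc^{\bs k,\bs r}_{\pi_{1,q}^\geom}\bigr)+O\!\left(E(\Gc^{\bs k,\bs r})\,q^{-1/2}\right).
\]
The error term requires bounding $E(\Gc^{\bs k,\bs r})$: its rank is $r^{S}$, its singularities are contained in $\bigcup_i\{a_i,a_i+(\text{singularities of }\Fc_q)\}$ hence number $O(Sc)$, and the Swan conductors add up under tensor product and pullback-by-translation to $O(Sc)$; multiplying through, $E(\Gc^{\bs k,\bs r})\ll r^{S}\cdot S\cdot c^2$, which is the claimed shape with the constant absorbing the (bounded) conductor. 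Since $c$ is bounded along a coherent family, this gives $O(r^{S}Sq^{-1/2})$ with a $q$-independent implied constant.

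Next I would identify the main term. Consider $\Gc=\bigoplus_{1\le i\le H}[+a_i]^*\Fc_q$; in the classical case, the equality of arithmetic and geometric monodromy groups of $\Fc_q$ together with the \emph{independence of shifts} hypothesis feeds into the Goursat–Kolchin–Ribet criterion of Katz (as recalled in the remarks after Definition \ref{def:coherentFamily}) to conclude that $G_\geom(\Gc)=G_\arith(\Gc)$ is conjugate to $G^H\le\GL_n(\C)^H$ — here the $I_q$-compatibility, i.e. the hypothesis that $\Fc_q$ is $\{a_1,\dots,a_H\}$-compatible, is exactly what rules out the residual coincidences in the Kummer case. In the Kummer case one argues directly: $t_{\Fc_q}=\chi_q\circ f_q$ and the product of shifts is $\chi_q$ of a rational function, whose sum is evaluated by Weil's bound, the main term being $\prod_i\mult_1(\Std^{\otimes k_i}\otimes D(\Std)^{\otimes r_i})$ (a product of characters of a finite cyclic group, nonzero only when each exponent difference is divisible by the order), and the compatibility condition on $I_q$ guarantees $f_q$ of the shifted product is not a perfect power for spurious reasons. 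In both cases, once $G_\geom(\Gc^{\bs k,\bs r})=G^H$ and $G_\geom=G_\arith$, Remark \ref{rem:DeligneSumTFMon} gives
\[
\tr\bigl(\Frob_q\mid \Gc^{\bs k,\bs r}_{\pi_{1,q}^\geom}\bigr)=\dim\Bigl(\bigboxtimes_{1\le i\le H}\bigl(\Std^{\otimes k_i}\otimes D(\Std)^{\otimes r_i}\bigr)\Bigr)_{G^H}=\prod_{1\le i\le H}\mult_1\bigl(\Std^{\otimes k_i}\otimes D(\Std)^{\otimes r_i}\bigr),
\]
the last equality because the space of coinvariants (equivalently, invariants, since $G^0$ is semisimple) of an external tensor product over a product group factors as the product of the invariant spaces of the factors.

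Finally I would combine the two displays to obtain the stated asymptotic, noting that the $I_q$-compatibility hypothesis is invoked precisely to ensure the main term is the ``expected'' product of multiplicities rather than something larger; when $\Fc_q$ is not a Kummer sheaf this hypothesis is vacuous and the classical-case argument applies unconditionally. I expect the main obstacle to be the monodromy computation: verifying that the Goursat–Kolchin–Ribet criterion genuinely applies requires checking that the ``independence of shifts'' condition \eqref{eq:translatinIsom} together with the exclusion of $\SO_8(\C)$ (to avoid triality-related coincidences) and, in the Kummer case, the compatibility of $I_q$, are exactly the hypotheses needed to force the monodromy of $\bigoplus_i[+a_i]^*\Fc_q$ to be the full product $G^H$ — this is where all the structural conditions of a coherent family are used, and it is the step that requires the most care rather than the (essentially bookkeeping) estimate of $E(\Gc^{\bs k,\bs r})$.
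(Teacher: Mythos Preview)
Your proposal is correct and follows essentially the same route as the paper: the classical case is exactly the paper's bulleted sketch (apply Theorem~\ref{thm:DeligneSumTF} to $\Gc^{\bs k,\bs r}$, bound $E(\Gc^{\bs k,\bs r})\ll r^{S}Sc^2$, and use Goursat--Kolchin--Ribet plus Remark~\ref{rem:DeligneSumTFMon} to identify the main term), while the Kummer case is handled directly via the Weil bound for $\chi(g)$ with $g(X)=\prod_i f_q(X+a_i)^{k_i-r_i}$, the compatibility hypothesis ensuring that $g$ is a $d$th power only when every $d\mid k_i-r_i$. One small wording point: your sentence ``In both cases, once $G_\geom(\Gc^{\bs k,\bs r})=G^H$\dots'' is misleading, since in the Kummer case the paper (and you, two sentences earlier) does \emph{not} go through a $G^H$-monodromy computation but rather evaluates the character sum directly; the Goursat--Kolchin--Ribet step is only invoked in the classical case.
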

\begin{proof}
  The proof in the case of a classical monodromy group was sketched above; details can be found in \cite{FKMSumProducts} or \cite{PG16}. For a Kummer sheaf $\Lc_{\chi(f)}$ with $\chi: \F_q^\times\to\C$ of order $d$ and $f\in\F_q(T)$, the sum is by multiplicativity equal to
  \[\frac{1}{q}\sum_{x\in\F_q}\chi(g(x))=\frac{1}{q}\sum_{x\in\F_q}t_{\Lc_\chi(g)},\]
  where $g(X)=\prod_{1\le i\le H} f(X+a_i)^{k_i-r_i}$. Writing $f=f_1/f_2$ and $g=g_1/g_2$ with $f_i,g_i\in\F_q[X]$, we see that $\deg(g_1)+\deg(g_2)\le S(\bs k,\bs r)(\deg(f_1)+\deg(f_2))\le S(\bs k,\bs r)\cond(\Fc_q)$. By Theorem \ref{thm:DeligneSumTF} and Remark \ref{rem:DeligneSumTFMon} applied to the Kummer sheaf $\Lc_{\chi(g)}$, it follows that
  \[\frac{1}{q}\sum_{x\in\F_q}\chi(g(x))=\delta_{g \text{ is a }d-\text{power}}+O(S(\bs k,\bs r)c^2q^{-1/2}).\]
  Observe that $\mult_1(\Std^{\otimes k_i}\otimes D(\Std)^{\otimes r_i})=\delta_{d\mid k_i-r_i}$, so that the claim is clear if $f=X$. Otherwise, the compatibility assumption shows that\footnote{This idea appears on page 9 of the published version of \cite{LamzModm}.} there exists a zero $x$ of $f$ such that $f(x+a)\neq 0$ for all $a\in I$. Indeed, otherwise, for any zero $x$ of $f$ and any integer $d_f\ge 0$, there would exist $a_1,\dots,a_{d_f}\in\overline\F_q$ with $x+a_1,\dots,x+\sum_{i=1}^{d_f} a_i$ distinct zeros of $f$, which is impossible. This implies that $g$ cannot be a $d$-power if $d\nmid k_i-r_i$ for some $i$.
\end{proof}

\subsubsection{Conclusion}

The asymptotic accuracy of the model then follows from Proposition \ref{prop:sumProducts} applied to \eqref{eq:accuracyModel}, recalling that $\sum_{k_1+\dots+k_H=k, \ k_i\ge 0}\binom{k}{k_1\dots k_H}=H^k$:
\begin{proposition}\label{prop:moments} Under the hypotheses of Theorem \ref{thm:gaussian1}, for all integers $k,r\ge 0$ and $I_q\subset\F_q$ of size $H$, we have
  \begin{equation*}
  M_q(k,r;I_q)=M_\prob(k,r;H)+O\left(c^{3(k+r)}q^{-1/2}H^{k+r}\right)
\end{equation*}
with $c=\max_{q} \cond(\Fc_q)$.
\end{proposition}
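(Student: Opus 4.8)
The plan is to deduce Proposition~\ref{prop:moments} by inserting Proposition~\ref{prop:sumProducts} into the exact formula \eqref{eq:accuracyModel} for $M_q(k,r;I_q)$ term by term, and then matching the resulting main terms with the formula of Proposition~\ref{prop:momentsprob} while controlling the accumulation of the error terms over the multinomial sum. Write $I_q=\{a_1,\dots,a_H\}$. The inner average in \eqref{eq:accuracyModel} is, for each pair of compositions $(k_1,\dots,k_H)$ and $(r_1,\dots,r_H)$ with $\sum k_i=k$, $\sum r_i=r$, a sum of products of trace functions over shifts. Since the $a_i$ are distinct and $\Fc_q$ is $I_q$-compatible (so the same is true for the subset $\{a_i : k_i+r_i>0\}$, the only shifts that actually appear), Proposition~\ref{prop:sumProducts} applies and gives
\[
\aver{x}{\F_q}{q}\prod_{i=1}^H t_q(x+a_i)^{k_i}\overline{t_q(x+a_i)}^{r_i}
=\prod_{i=1}^H\mult_1\!\left(\Std^{\otimes k_i}\otimes D(\Std)^{\otimes r_i}\right)+O\!\left(r^{S(\bs k,\bs r)}S(\bs k,\bs r)q^{-1/2}\right),
\]
where here $S(\bs k,\bs r)=\sum_i(k_i+r_i)=k+r$ is \emph{constant} over the whole double sum. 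Substituting the main term into \eqref{eq:accuracyModel} reproduces precisely the formula for $M_\prob(k,r;H)$ from Proposition~\ref{prop:momentsprob}, since the multinomial coefficients match. So it only remains to bound the total contribution of the error terms.

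For the error, I would factor out the uniform bound $S(\bs k,\bs r)=k+r$ and $r^{S(\bs k,\bs r)}=(\rank\Fc_q)^{k+r}\le c^{k+r}$ (with $c=\max_q\cond(\Fc_q)\ge\rank(\Fc_q)$), so that the total error is
\[
O\!\left((k+r)\,c^{k+r}\,q^{-1/2}\sum_{\substack{k_1+\dots+k_H=k\\k_i\ge0}}\sum_{\substack{r_1+\dots+r_H=r\\r_i\ge0}}\binom{k}{k_1\cdots k_H}\binom{r}{r_1\cdots r_H}\right).
\]
By the multinomial theorem the double sum is exactly $H^k\cdot H^r=H^{k+r}$, so the error is $O\!\left((k+r)c^{k+r}q^{-1/2}H^{k+r}\right)$. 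Finally, for $k,r\ge 0$ one has $(k+r)c^{k+r}\le c^{3(k+r)}$ whenever $c\ge 2$ (and for $c=1$ the sheaf is trivial and there is nothing to prove, or one simply enlarges $c$), absorbing the polynomial factor $(k+r)$ into the exponential; this yields the stated bound $M_q(k,r;I_q)=M_\prob(k,r;H)+O\!\left(c^{3(k+r)}q^{-1/2}H^{k+r}\right)$.

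I do not expect a genuine obstacle here: the statement is essentially a bookkeeping corollary of Proposition~\ref{prop:sumProducts}. The one point requiring a little care is verifying that the $I_q$-compatibility hypothesis genuinely transfers to every subset of shifts that occurs — but in the Kummer case compatibility is a condition on sums of at most $\deg(f)$ elements of $I_q$, which is inherited by subsets, and in the non-Kummer case compatibility is automatic; so Proposition~\ref{prop:sumProducts} does apply uniformly across the double sum. The other mild subtlety is making sure the implicit constant in the final $O$ is genuinely independent of $k,r,H,q$: this holds because the constant in Proposition~\ref{prop:sumProducts} is, and the only additional manipulations are the exact multinomial identity $\sum\binom{k}{k_1\cdots k_H}=H^k$ and the elementary inequality $(k+r)c^{k+r}\le c^{3(k+r)}$, neither of which introduces new constants.
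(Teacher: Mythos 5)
Your argument is correct and is essentially the paper's own proof: the paper obtains Proposition \ref{prop:moments} exactly by substituting Proposition \ref{prop:sumProducts} into the exact expansion \eqref{eq:accuracyModel} and using the multinomial identity $\sum_{k_1+\dots+k_H=k}\binom{k}{k_1\cdots k_H}=H^k$. Your additional remarks (uniformity of the implicit constant, inheritance of $I_q$-compatibility, and absorbing the factor $(k+r)\,r^{k+r}$ into $c^{3(k+r)}$) simply make explicit the bookkeeping the paper leaves implicit.
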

We make the normalizations
\[\tilde S(t_q,I_q+x)=S(t_q,I_q+x)/|I_q|^{1/2}\text{ and }\tilde S(H)=S(H)/H^{1/2},\]
and for $k,r\ge 0$ we denote by $\tilde M_q(k,r;I_q)$, $\tilde M_\prob(k,r;H)$ the corresponding moments, so that Proposition \ref{prop:moments} becomes:
\begin{equation}
  \label{eq:MpMprobtilde}
  \tilde M_q(k,r;I_q)=\tilde M_\prob(k,r;H)+O\left(c^{3(k+r)}q^{-1/2}H^{\frac{k+r}{2}}\right).
\end{equation}

\subsection{Central limit theorem}

\begin{proposition}\label{prop:CLT}
  Under the hypotheses and notations of Theorem \ref{thm:gaussian1}, the random variable $\tilde S(H)$ converges in law to the random variable $\Nc$ when $H\to\infty$. Moreover,
  \[\lim_{H\to\infty}\tilde M_\prob(k,r;H)=M_\Nc(k,r),\]
  for all integers $k,r\ge 0$, where $M_\Nc(k,r)$ are the moments of $\Nc$.
\end{proposition}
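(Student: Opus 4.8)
The plan is to prove Proposition \ref{prop:CLT} in two coordinated halves: first the convergence in law of $\tilde S(H)$ to $\Nc$ via the classical (vector-valued) central limit theorem, and second the convergence of all the mixed moments $\tilde M_\prob(k,r;H)\to M_\Nc(k,r)$. The point is that the two statements do not follow from one another for free — convergence in law does not imply convergence of moments without a uniform integrability input — so both must be argued, but the moment bounds \eqref{eq:quantcondReal3} and \eqref{eq:quantcondComplex3} of Proposition \ref{prop:momentsRM} give exactly the control needed for the second half and, incidentally, also yield the uniform-integrability hypotheses one could alternatively use for the first.

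\textbf{Convergence in law.} Recall from Section \ref{sec:probModel} that $S(H)=Z_1+\dots+Z_H$ with the $Z_i$ i.i.d.\ copies of $Z=\tr(Y)$, $Y=\pi(X)$, $X$ Haar-uniform in a maximal compact $K\le G$. Each $Z_i$ is bounded (by $\|t\|_\infty\le n=\rank(\Fc)$, or directly $|\tr(g)|\le n$ for $g\in K\le\GL_n$), hence has finite second moment; by Lemma \ref{lemma:covXi}, $\E(Z)=0$ and the real $2$-vector $(\Re Z,\Im Z)$ has covariance matrix $\Gamma$ equal to $\left(\begin{smallmatrix}1&0\\0&0\end{smallmatrix}\right)$ in the self-dual case and $\frac12\left(\begin{smallmatrix}1&0\\0&1\end{smallmatrix}\right)$ otherwise. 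Applying the multivariate central limit theorem to the i.i.d.\ sequence $\big((\Re Z_i,\Im Z_i)\big)_i$ in $\R^2$, the normalized sum $\tilde S(H)=S(H)/\sqrt H$ converges in law to the centered Gaussian in $\R^2\cong\C$ with covariance $\Gamma$, which is precisely $\Nc$ as in Theorem \ref{thm:gaussian1}. (In the self-dual case $Z$ is real-valued by the last Lemma of Section \ref{sec:probModel}, so this is just the one-dimensional CLT and the limit is supported on the real line, consistent with the degenerate covariance.)

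\textbf{Convergence of moments.} For the mixed moments I would combine the convergence in law just obtained with a uniform integrability / moment-boundedness argument. Concretely, it suffices to show that for each fixed $k,r$ the family $\{\tilde S(H)\}_H$ has $(k{+}r{+}1)$-st absolute moments bounded uniformly in $H$; then convergence in law plus uniform integrability of $\{\tilde S(H)^k\overline{\tilde S(H)}^r\}_H$ upgrades to convergence of the $(k,r)$-moment to $M_\Nc(k,r)$. The uniform bound is exactly where Proposition \ref{prop:momentsprob} and the subgaussian estimates \eqref{eq:quantcondReal3}, \eqref{eq:quantcondComplex3} enter: plugging $\mult_1(\Std^{\otimes k_i}\otimes D(\Std^{\otimes r_i}))\le\sqrt{k_i!\,r_i!}$ (resp.\ $\le(k_i-1)!!$ in the symplectic case) into the formula of Proposition \ref{prop:momentsprob} and using the multinomial identity $\sum_{k_1+\dots+k_H=k}\binom{k}{k_1\dots k_H}=H^k$, together with a Cauchy--Schwarz-type bookkeeping to handle the product over $i$, one gets $|M_\prob(k,r;H)|\ll_{k,r} H^{(k+r)/2}$, hence $\tilde M_\prob(k,r;H)=M_\prob(k,r;H)/H^{(k+r)/2}=O_{k,r}(1)$ uniformly in $H$. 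Alternatively — and this is cleaner — one can bypass uniform integrability entirely and compute the limit of $\tilde M_\prob(k,r;H)$ directly: the dominant contribution to $M_\prob(k,r;H)$ comes from index patterns $\bs k,\bs r$ in which the coordinates are ``paired'' (each $k_i+r_i\in\{0,2\}$ in the self-dual case, each $(k_i,r_i)\in\{(0,0),(1,1)\}$otherwise, using that $\mult_1$ of an odd tensor power, resp.\ of $\Std^{\otimes a}\otimes D(\Std^{\otimes b})$ with $a\neq b$ small, vanishes by \eqref{eq:quantcondReal}, \eqref{eq:quantcondComplex2}), each such pattern contributing with the multiplicity $\mult_1(\Std^{\otimes2})=1$ (resp.\ $\mult_1(\Std\otimes D(\Std))=1$) from Lemma \ref{lemma:covXi}, and a straightforward count of the surviving multinomial coefficients reproduces the Gaussian moment formula $M_\Nc(k,r)$ (e.g.\ $(k-1)!!\,\delta_{k\text{ even}}$ in the real case, $k!\,\delta_{k=r}$ in the complex case). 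Either route works; I would present the second since it is self-contained and makes the ``close to Gaussian'' mechanism transparent.

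\textbf{Main obstacle.} The one genuinely delicate point is the bookkeeping in the moment computation: keeping track of which multi-index patterns $(\bs k,\bs r)$ survive (those where small tensor exponents are forced by the vanishing statements in Proposition \ref{prop:momentsRM}) versus which are merely error terms of lower order in $H$, and verifying that the leading-order count of multinomial coefficients reproduces the double-factorial (real case) or the $k!\,\delta_{k=r}$ (complex case). This is the combinatorial heart of the matter; everything else — the CLT application, the boundedness of $Z$, the identification of the covariance — is routine given the preceding lemmas. A secondary subtlety worth a sentence is that in the complex case the off-diagonal moments $M_\Nc(k,r)$ with $k\ne r$ vanish, and one must check the model reproduces this, i.e.\ that the $k\ne r$ contributions to $\tilde M_\prob(k,r;H)$ tend to $0$; this again follows from \eqref{eq:quantcondComplex2} forcing $\sum k_i=\sum r_i$ pattern-wise at leading order.
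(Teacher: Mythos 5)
Your first half (convergence in law via the two--dimensional CLT applied to $(\Re Z_i,\Im Z_i)$, with covariance from Lemma \ref{lemma:covXi} and the boundedness $|Z|\le n$) is exactly the paper's argument. For the convergence of moments the paper is much more economical than you are: since $Z$ is bounded, all absolute moments of $Z$ are finite, and the paper simply cites a general theorem on normalized i.i.d.\ sums (\cite[Theorem 7.5.1]{Gut}) to get uniform integrability of $|\tilde S(H)|^{k+r}$, hence moment convergence; no input from Proposition \ref{prop:momentsRM} is needed at this qualitative stage. Your route (b) — the direct combinatorial evaluation of $\lim_H\tilde M_\prob(k,r;H)$ by isolating the paired index patterns — is a valid, self-contained alternative (it is essentially the classical moment proof of the CLT) and has the merit of making the mechanism visible, at the cost of redoing bookkeeping the CLT already packages. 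One caution about your route (a) as literally stated: the Cauchy--Schwarz bound \eqref{eq:quantcondComplex3} alone does \emph{not} give $|M_\prob(k,r;H)|\ll_{k,r}H^{(k+r)/2}$ — the paper's own Proposition \ref{prop:boundMprobNSD} shows it only yields $(N+H-1)^NH^N$, which grows like $H^{2N}$ for fixed $N$. The uniform bound in $H$ comes instead from the vanishing $\mult_1(\Std)=0$ (Schur's lemma, i.e.\ $\E(Z)=0$): any composition containing a part with $k_i+r_i=1$ contributes zero, so at most $(k+r)/2$ indices can carry nonzero exponents and the composition count is $O_{k,r}(H^{(k+r)/2})$, the finitely many large parts being controlled trivially by $|Z|\le n$. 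You invoke this vanishing only in route (b); it is equally indispensable for route (a).
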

\begin{proof}
  This follows from the two-dimensional Central limit theorem and Lemma \ref{lemma:covXi}. To obtain the convergence of moments, it suffices to show that $\tilde S(H)$ is uniformly integrable (see e.g. \cite[Chapter 5.5]{Gut}), which follows from  \cite[Theorem 7.5.1]{Gut}.
\end{proof}
By \eqref{eq:MpMprobtilde}, this immediately implies:
\begin{corollary}[Moments are asymptotically Gaussian]\label{cor:convMoments}
  Under the hypotheses and notations of Theorem \ref{thm:gaussian1}, we have for all integers $k,r\ge 0$ that
  \[\lim_{q, |I_q|\to\infty} \tilde M_q(k,r;I_q)=M_\Nc(k,r).\]
\end{corollary}

\subsection{Method of moments and proof of Theorem \ref{thm:gaussian1}}

 To conclude the proof of Theorem \ref{thm:gaussian1}, it now suffices to apply the method of moments:
\begin{proposition}[Method of moments for complex-valued random variables]\label{prop:methodMoments}
  Let $(X_n)_{n\ge 0}$ be a sequence of complex random variables with moments $M_{X_n}(k,r)$. If $\lim_{n\to\infty} M_{X_n}(k,r)=M_{X_0}(k,r)$
  for all integers $k,r\ge 0$ and if
  \[\limsup_{k+r\to\infty} \frac{|M_{X_0}(k,r)|^{\frac{1}{k+r}}}{k+r}<\infty,\]
  then $X_n$ converges in law to $X_0$.
\end{proposition}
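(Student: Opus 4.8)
The statement to prove is the method of moments for complex-valued random variables (Proposition \ref{prop:methodMoments}).

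The plan is to reduce the complex-valued statement to the classical (multivariate real) method of moments by treating a complex random variable $X$ as the pair $(\Re X, \Im X) \in \R^2$ and translating between the two families of moments: the "holomorphic" moments $M_X(k,r) = \E(X^k \overline X^r)$ used in the statement, and the genuine joint moments $\E((\Re X)^a (\Im X)^b)$ that the two-dimensional method of moments speaks about. The linear change of variables $\Re X = (X + \overline X)/2$, $\Im X = (X - \overline X)/(2i)$ expands $\E((\Re X)^a (\Im X)^b)$ as a finite $\C$-linear combination (with $a+b$ fixed) of the $M_X(k,r)$ with $k + r = a + b$, and conversely $M_X(k,r)$ is a finite linear combination of the $\E((\Re X)^a(\Im X)^b)$ with $a + b = k + r$. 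Hence $\lim_n M_{X_n}(k,r) = M_{X_0}(k,r)$ for all $k,r \ge 0$ is equivalent to convergence of all genuine joint moments of $(\Re X_n, \Im X_n)$ to those of $(\Re X_0, \Im X_0)$.

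Next I would invoke the standard multivariate moment problem / method of moments in $\R^2$: if the moments of $X_0$ (equivalently, of the pair $(\Re X_0, \Im X_0)$) do not grow too fast, then they determine the law of $X_0$ uniquely, and convergence of all moments of $X_n$ to those of $X_0$ forces $X_n \to X_0$ in law. The clean sufficient condition is a Carleman-type bound; the hypothesis $\limsup_{k+r\to\infty} |M_{X_0}(k,r)|^{1/(k+r)}/(k+r) < \infty$ is exactly designed to transfer to the corresponding bound on the real joint moments. Indeed, writing $C$ for the limsup, for $k+r$ large we have $|M_{X_0}(k,r)| \le (C'(k+r))^{k+r}$ for a suitable constant $C'$, and via the change-of-variables expansion above (which has at most $2^{a+b}$ terms, each with bounded binomial coefficients) the absolute moments $\E|X_0|^m$ and hence all $\E((\Re X_0)^a(\Im X_0)^b)$ with $a+b=m$ satisfy the same kind of bound $\le (C''m)^m$. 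This growth rate is well under the Carleman threshold (which allows growth like $(2m)!\sim (m/e)^{2m}$, i.e. exponent $2m$ in $m$), so the moment problem for $(\Re X_0, \Im X_0)$ is determinate, and one concludes by the classical result that convergence of moments plus determinacy implies convergence in distribution (see e.g. \cite[Chapter 5]{Gut} or a standard reference on the multivariate moment problem). One also needs tightness of $(X_n)$, which follows from the uniform bound on second moments $M_{X_n}(1,1) = \E|X_n|^2 \to \E|X_0|^2 < \infty$ via Chebyshev.

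The main obstacle — really the only nonroutine point — is the determinacy step: verifying that the Carleman condition stated in terms of the holomorphic moments $M_{X_0}(k,r)$ genuinely implies determinacy of the two-dimensional distribution. The cleanest route is to bound a single sequence of absolute moments: from $M_{X_0}(m,m) = \E|X_0|^{2m}$ and $|M_{X_0}(k,r)| \le (\E|X_0|^{2k})^{1/2}(\E|X_0|^{2r})^{1/2} = (M_{X_0}(k,k))^{1/2}(M_{X_0}(r,r))^{1/2}$ by Cauchy--Schwarz, the hypothesis yields $\E|X_0|^m \le (C m)^m$ for all large $m$, which already gives $\sum_m (\E|X_0|^{2m})^{-1/2m} = \infty$; since the radial moments control all bivariate moments of the same total degree, Carleman's condition for the two-dimensional moment problem holds. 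Everything else — the change-of-variables bookkeeping, the passage from one-dimensional Carleman for $|X_0|$ to the bivariate statement, and the final "moments determine the limit" invocation — is standard and can be cited rather than reproven.
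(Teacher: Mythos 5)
Your argument is correct, but note that the paper does not actually prove this proposition: it simply cites \cite[Chapter 5.8.4]{Gut}, so what you have written is essentially the proof that the citation delegates. Your reduction is the standard one — identify $X$ with $(\Re X,\Im X)$, observe that the holomorphic moments $M_X(k,r)=\E(X^k\overline X^r)$ and the joint real moments $\E((\Re X)^a(\Im X)^b)$ are finite invertible linear combinations of one another within each fixed total degree, and then apply the bivariate Fréchet--Shohat theorem. The one step that genuinely needs care is the determinacy of the two-dimensional limit law, and you handle it correctly: Cauchy--Schwarz gives $|M_{X_0}(k,r)|\le (M_{X_0}(k,k))^{1/2}(M_{X_0}(r,r))^{1/2}$, the hypothesis gives $\E|X_0|^{2m}=M_{X_0}(m,m)\le (Cm)^{2m}$, hence $\sum_m (\E|X_0|^{2m})^{-1/(2m)}=\infty$; since $|\Re X_0|,|\Im X_0|\le |X_0|$ the marginals inherit this Carleman bound, and determinacy of the joint law then follows from the multivariate Carleman criterion (Petersen's theorem). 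One small imprecision: the growth $(Cm)^m$ for $\E|X_0|^m$ is not ``well under'' the Carleman threshold but sits exactly at it — the condition $\limsup |M_{X_0}(k,r)|^{1/(k+r)}/(k+r)<\infty$ is precisely calibrated so that the harmonic-type series still diverges — but this does not affect the validity of the conclusion. In short, the proposal is a complete and correct expansion of the argument the paper outsources to \cite{Gut}.
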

\begin{proof}
  See for example \cite[Chapter 5.8.4]{Gut}.
\end{proof}
\begin{corollary}[Method of moments for normal convergence]\label{cor:methodMomentsNormal}
  Let $(X_n)_{n\ge 0}$ be a sequence of complex random variables. If for all integers $k,r\ge 0$, the moment $M_{X_n}(k,r)$ converges to the corresponding moment of a normal random variable $\Nc$ as $n\to\infty$, then $X_n$ converges in law to $\Nc$.
\end{corollary}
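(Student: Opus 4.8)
The plan is to obtain this corollary directly from Proposition~\ref{prop:methodMoments}, applied with $X_0=\Nc$. The hypothesis that $M_{X_n}(k,r)\to M_{\Nc}(k,r)$ for all $k,r\ge 0$ is given by assumption, so the only thing left to verify is the Carleman-type growth condition
\[\limsup_{k+r\to\infty}\frac{\bigl|M_{\Nc}(k,r)\bigr|^{1/(k+r)}}{k+r}<\infty\]
on the moments of the limiting normal law.

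First I would bound $|M_{\Nc}(k,r)|$. Since $\Nc$ is a (possibly degenerate, possibly non-centered) normal random variable in $\C\cong\R^2$, we have $|M_{\Nc}(k,r)|=|\E(\Nc^k\overline{\Nc}^r)|\le\E|\Nc|^{k+r}$, and the absolute moments of a Gaussian vector satisfy $\E|\Nc|^{m}\le C_{\Nc}^{\,m}\,m!$ for some constant $C_{\Nc}$ depending only on the mean and covariance of $\Nc$ (this follows from writing $|\Nc|\le|\mu|+|\Nc-\mu|$ and computing the moments of $|\Nc-\mu|$ via the Isserlis--Wick formula, together with $(a+b)^m\le 2^{m-1}(a^m+b^m)$; in the cases relevant to Theorem~\ref{thm:gaussian1} one even has the cleaner identities $M_{\Nc}(k,r)=(k+r-1)!!$ or $0$ in the real-valued case and $M_{\Nc}(k,r)=k!\,\delta_{k=r}$ in the complex case, and one can alternatively read these bounds off Proposition~\ref{prop:momentsRM} in the limit $R\to\infty$). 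Hence $|M_{\Nc}(k,r)|\le C_{\Nc}^{\,k+r}(k+r)!$.

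Then the verification of the growth condition is a single application of Stirling's formula: $\bigl(C_{\Nc}^{\,m}\,m!\bigr)^{1/m}=C_{\Nc}\,(m!)^{1/m}\sim C_{\Nc}\,m/e$, so with $m=k+r$ we get $|M_{\Nc}(k,r)|^{1/(k+r)}/(k+r)\to C_{\Nc}/e$, which is finite (in fact, using the sharper bound $|M_{\Nc}(k,r)|\le\sqrt{k!\,r!}\le\sqrt{(k+r)!}$ available in the cases of Theorem~\ref{thm:gaussian1}, the $\limsup$ is $0$). This is exactly the remaining hypothesis of Proposition~\ref{prop:methodMoments}, which therefore yields that $X_n$ converges in law to $\Nc$, completing the proof.

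I do not anticipate a genuine obstacle: all the substance is contained in Proposition~\ref{prop:methodMoments} (a standard multivariate method-of-moments criterion, cf.~\cite[Chapter~5.8.4]{Gut}), and the corollary only adds the elementary observation that Gaussian moments grow slowly enough for the uniqueness part of the moment problem to apply. The single point that deserves a line of care is to cover uniformly the non-degenerate complex case and the degenerate real-valued case of $\Nc$ occurring in Theorem~\ref{thm:gaussian1}; both are absorbed by the crude bound $|M_{\Nc}(k,r)|\le C_{\Nc}^{\,k+r}(k+r)!$.
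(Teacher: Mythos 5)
Your proof is correct and is exactly the intended argument: the corollary is a direct application of Proposition \ref{prop:methodMoments} with $X_0=\Nc$, the only content being the verification that Gaussian moments satisfy the Carleman-type growth condition, which your bound $|M_{\Nc}(k,r)|\le C_{\Nc}^{\,k+r}(k+r)!$ together with Stirling's formula settles.
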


Hence, by Corollary \ref{cor:methodMomentsNormal}, Theorem \ref{thm:gaussian1} follows directly from Corollary \ref{cor:convMoments}.

\section{Quantitative version (Theorem \ref{thm:gaussian2})}\label{sec:quantitative}

\subsection{Review of Lamzouri's method}
  We recall that the idea of \cite{LamzShortSums} is to remark that the random variable $Z$ modeling the values of the Dirichlet characters has moments bounded by those of a Gaussian. In particular, this implies that if $S(H)=Z_1+\dots+Z_H$ with $Z_i$ independent distributed like $Z$ as above, we have
  \[\E\left((\Re{S(H)})^{2k}(\Im{S(H)})^{2r}\right)\ll (k+r)!H^{k+r}\]
  (see \cite[(3.5)]{LamzShortSums}), which is a square-root cancellation over the trivial bound $H^{2(k+r)}(2k+2r)!$. This implies that one can:
  \begin{enumerate}
  \item Approximate the characteristic function of $(S(\chi_p,x,H_p))_{x\in\F_p}$ asymptotically by that of the probabilistic model when $p,H_p\to\infty$ (see the proof of \cite[Theorem 3.1]{LamzShortSums}).
  \end{enumerate}
  Lamzouri then proceeds as follows:
\begin{enumerate}[start=2]
\item\label{item:proofTheoremGaussian2Step1} As in the classical proof of the central limit theorem, the characteristic function of the model is approximated by that of a Gaussian (\cite[Lemma 3.2]{LamzShortSums}).
\item\label{item:proofTheoremGaussian2Step2} Combining the last two points, this gives an asymptotic approximation of the characteristic function of $(S(\chi_p,x,H_p))_{x\in\F_p}$ by that of a Gaussian (\cite[Theorem 3.1]{LamzShortSums}).
\item\label{item:proofTheoremGaussian2Step3} Using a smooth approximation for the sign function involving characteristic functions, due to Selberg (\cite[(4.4)]{LamzShortSums}), one gets an approximate expression of the joint distribution function from the characteristic function, which allows to conclude.
\end{enumerate}
\subsection{Generalization to trace functions}
As we explained in the introduction, this can be generalized to coherent families of trace functions thanks to Proposition \ref{prop:momentsRM}.
We will however proceed a bit differently than Lamzouri, skipping steps \ref{item:proofTheoremGaussian2Step1}--\ref{item:proofTheoremGaussian2Step2} above and:
\begin{enumerate}
\item Directly use step \ref{item:proofTheoremGaussian2Step3} to approximate the joint distribution function of the random variable \eqref{eq:RVSfI} by that of the model.
\item Apply a generalization to higher dimensions of the Berry-Esseen theorem appearing in \cite{NormAppr}, to obtain an approximation of the joint distribution function of the model.
\end{enumerate}

\subsection{Characteristic function of a Gaussian}\label{subsec:charGaussAppr}
Let us recall that if $Z$ is a normal random variable in $\R$ with mean $0$ and variance $\sigma^2$, the moments are
\[\E(Z^k)=\begin{cases}
      0&\text{ if }k\ge 1\text{ is odd}\\
      \sigma^k(k-1)!!&\text{ if }k\ge 0\text{ is even}
    \end{cases}\]
and its characteristic function is $u\mapsto\E(e^{iuZ})=e^{-\frac{1}{2}\sigma^2u^2}$.

Hence, if $Z$ is a normal random variable in $\C\cong\R^2$ with mean $0$ and diagonal covariance matrix $\sigma^2\left(\begin{smallmatrix}1&0\\0&1\end{smallmatrix}\right)$, then its characteristic function is
\[(u,v)\mapsto\tilde\phi(u,v)=\E\left(e^{i(u\Re{Z}+v\Im{Z})}\right)=e^{-\frac{\sigma^2}{2}(u^2+v^2)}.\]

As we explained in Remark \ref{rem:momentsconjreim}, we will continue to rather work with moments of the form $\E(Z^k\overline{Z}^r)$ and characteristics functions of the form $(u,v)\mapsto\phi(u,v)=\E(e^{i(uZ+v\overline{Z})})$, which renders the computations easier and more natural in our setting. Note that
\begin{eqnarray}
  \phi(u,v)&=&\tilde\phi(u+v,i(u-v))\text{ and}\nonumber\\
  \tilde\phi(u,v)&=&\phi\left(\frac{u-iv}{2},\frac{u+iv}{2}\right)\label{eq:phitildephi}
\end{eqnarray}
for all $u,v\in\C$. Hence, $\phi(u,v)=e^{-2\sigma^2uv}$, so that $\E\big(Z^k\overline{Z}^r\big)=(2\sigma^2)^kk!\delta_{k=r}$.

\subsection{Approximation of characteristic functions through moments}
\begin{lemma}\label{lemma:charfunctMoments}
  Let $X_1,X_2$ be complex random variables with moments $M_j(k,r)=\E(X_j^k\overline{X_j}^r)$ and characteristic functions $(u,v)\mapsto\phi_j(u,v)=\E(e^{i(uX_j+v\overline{X_j})})$ \textup{(}$j=1,2$\textup{)} for $u,v\in\C$ and $k,r\ge 0$ integers. Assume that
  \[M_1(k,r)=M_2(k,r)+O(g(k,r))\]
  for all $k,r\ge 0$ with some $g:\N^2\to\R$. Then for any fixed even integer $N\ge 1$ and $u\in\C$, we have
  \begin{eqnarray*}
    \phi_1(u,\overline{u})&=&\phi_2(u,\overline{u})+O\left(\frac{|u|^{N}}{N!}(|M_1(N/2,N/2)|+|M_2(N/2,N/2)|)\right)\\
    &&+O\left(\sum_{n<N} \frac{|u|^n}{n!}\sum_{a=0}^n \binom{n}{a}|g(a,n-a)|\right).
  \end{eqnarray*}
In particular, if $g(k,r)=h(k+r)$ for all $k,r\ge 0$ for some $h:\N\to\R$, we have
  \begin{eqnarray*}
    \phi_1(u,\overline{u})&=&\phi_2(u,\overline{u})+O\left(\frac{|u|^{N}}{N!}(|M_1(N/2,N/2)|+|M_2(N/2,N/2)|)\right)\\
    &&+O\left(\max_{n<N} |h(n)| (1+|u|^N)\right).
  \end{eqnarray*}
  If $X_1$, $X_2$ are random variables in $\R$, then a similar relation holds for $\phi_1(u,0)$ and $\phi_2(u,0)$ with $u\in\R$.
\end{lemma}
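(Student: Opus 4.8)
The plan is to expand each characteristic function $\phi_j(u,\overline{u}) = \E(e^{i(uX_j + \overline{u}\,\overline{X_j})})$ as a Taylor series in the exponent and truncate at order $N$, then compare the two truncated series term-by-term using the hypothesis on the moments. Concretely, I would write $e^{i(uX_j+\overline{u}\,\overline{X_j})} = \sum_{n=0}^{N-1}\frac{i^n(uX_j+\overline{u}\,\overline{X_j})^n}{n!} + \mathcal{R}_N$ where the remainder satisfies the standard bound $|\mathcal{R}_N| \le \frac{|uX_j + \overline{u}\,\overline{X_j}|^N}{N!} \le \frac{(2|u||X_j|)^N}{N!} = \frac{2^N|u|^N|X_j|^N}{N!}$. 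Taking expectations, $\E(\mathcal{R}_N)$ is controlled by $\frac{2^N|u|^N}{N!}\E(|X_j|^N)$; since $N$ is even, $\E(|X_j|^N) = \E((X_j\overline{X_j})^{N/2}) = M_j(N/2,N/2)$, which (being real and nonnegative) equals $|M_j(N/2,N/2)|$. Absorbing the harmless factor $2^N$ into the implied constant (or, more cleanly, tracking it — but the statement as written suppresses it, so I would note that the $O$-constant may depend on nothing or fold $2^N$ in via $N$ being fixed) gives the first error term.

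Next I would handle the main terms. For each $0 \le n < N$, expand $(uX_j + \overline{u}\,\overline{X_j})^n = \sum_{a=0}^n \binom{n}{a} u^a \overline{u}^{\,n-a} X_j^a \overline{X_j}^{\,n-a}$, so that $\E$ of the $n$-th term is $\frac{i^n}{n!}\sum_{a=0}^n \binom{n}{a} u^a\overline{u}^{\,n-a} M_j(a, n-a)$. Subtracting the $j=1$ series from the $j=2$ series, the difference of $n$-th terms is $\frac{i^n}{n!}\sum_{a=0}^n \binom{n}{a} u^a \overline{u}^{\,n-a}(M_1(a,n-a) - M_2(a,n-a))$, whose absolute value is at most $\frac{|u|^n}{n!}\sum_{a=0}^n \binom{n}{a}|M_1(a,n-a)-M_2(a,n-a)| \le \frac{|u|^n}{n!}\sum_{a=0}^n \binom{n}{a}|g(a,n-a)|$ by hypothesis (using $|u^a\overline{u}^{\,n-a}| = |u|^n$). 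Summing over $n < N$ yields the second error term exactly as stated. This establishes the general form of the lemma.

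For the ``in particular'' statement with $g(k,r) = h(k+r)$, I would substitute $g(a,n-a) = h(n)$ into the bound above: $\sum_{a=0}^n \binom{n}{a}|h(n)| = 2^n |h(n)|$, so the second error becomes $O\!\left(\sum_{n<N} \frac{(2|u|)^n}{n!}|h(n)|\right) \le O\!\left(\max_{n<N}|h(n)| \sum_{n<N}\frac{(2|u|)^n}{n!}\right)$. Bounding the partial exponential sum crudely by a constant times $1 + (2|u|)^{N}$ (or $1 + |u|^N$ after absorbing the $2^N$, since $N$ is fixed and we are in an $O_N$ regime) gives $O(\max_{n<N}|h(n)|(1+|u|^N))$. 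The real-valued case is identical but simpler: one works with a single variable, expanding $e^{iuX_j} = \sum_{n<N}\frac{(iuX_j)^n}{n!} + \mathcal{R}_N$ with $|\mathcal{R}_N| \le \frac{|u|^N|X_j|^N}{N!}$ and $\E(|X_j|^N) = M_j(N)$ for $N$ even, and comparing the $n$-th moments directly without the binomial split.

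The main obstacle — really the only subtlety — is bookkeeping the constant $2^N$ (from $|uX_j + \overline{u}\,\overline{X_j}| \le 2|u||X_j|$ and from $\sum_a \binom{n}{a} = 2^n$) against what the statement claims; since $N$ is a \emph{fixed} even integer throughout, $2^N$ is a legitimate part of the $O_N$-constant, so there is no genuine difficulty, only the need to state the dependence cleanly. A secondary point to get right is that $N$ even is exactly what makes $\E(|X_j|^N) = M_j(N/2,N/2)$ a bona fide moment appearing in the hypothesis rather than an absolute moment $\E(|X_j|^N)$ with no algebraic handle — this is why the hypothesis is phrased in terms of $M_j(k,r) = \E(X_j^k\overline{X_j}^r)$ and why the truncation order is required to be even.
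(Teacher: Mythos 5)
Your proposal is correct and follows exactly the paper's (very terse) proof, which simply invokes the expansion $e^{ix}=\sum_{n<N}\frac{i^nx^n}{n!}+O(|x|^N/N!)$ for real $x$ — here applicable because $uX_j+\overline{u}\,\overline{X_j}=2\Re(uX_j)$ is real, a point you use implicitly via ``the standard bound.'' Your observations about the $2^N$ factors being absorbed for fixed $N$ and about evenness of $N$ being what turns $\E(|X_j|^N)$ into the moment $M_j(N/2,N/2)$ are both accurate and are precisely the details the paper leaves unstated.
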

\begin{proof}
  It suffices to use the expansion $e^{ix}=\sum_{n<N} \frac{i^nx^n}{n!}+O\left(\frac{|x|^N}{N!}\right)$ valid for $x\in\R$.
\end{proof}

\subsection{Bounding moments}
In order to apply Lemma \ref{lemma:charfunctMoments}, we will need bounds on the moments $M_\prob(N,N; H)$, provided by Proposition \ref{prop:momentsRM}. Recall that by Proposition \ref{prop:momentsprob}, we have
\[M_\prob(N,N; H)=N!^2\sum_{\substack{k_1+\dots+k_H=N\\k_i\ge 0}}\sum_{\substack{r_1+\dots+r_H=N\\r_i\ge 0}}\prod_{i=1}^H \frac{\E(Z_i^{k_i}\overline{Z_i^{r_i}})}{k_i!r_i!}.\]
Note that if all $Z_i$ were normal variables in $\C$ with mean $0$ and covariance matrix $\sigma^2
\left(\begin{smallmatrix}1&0\\0&1\end{smallmatrix}\right)$ (resp. $\sigma^2
\left(\begin{smallmatrix}1&0\\0&0\end{smallmatrix}\right)$), then this would be equal to $(2\sigma^2)^NN!H^N$ (resp. $\sigma^{2N}(2N-1)!!H^N$).
\begin{proposition}[Non-self-dual case]\label{prop:boundMprobNSD}
  If the conclusions of Proposition \ref{prop:momentsRM} hold, then in the non-self-dual case, $M_\prob(N,N;H)\le (N+H-1)^NH^N$.
\end{proposition}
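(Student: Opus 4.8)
The plan is to start from the exact formula
\[
M_\prob(N,N;H)=N!^2\sum_{\substack{k_1+\dots+k_H=N\\k_i\ge 0}}\sum_{\substack{r_1+\dots+r_H=N\\r_i\ge 0}}\prod_{i=1}^H \frac{\mult_1(\Std^{\otimes k_i}\otimes D(\Std^{\otimes r_i}))}{k_i!r_i!}
\]
coming from Proposition \ref{prop:momentsprob}, and to bound each factor using the subgaussian estimate \eqref{eq:quantcondComplex3} of Proposition \ref{prop:momentsRM}, namely $\mult_1(\Std^{\otimes k_i}\otimes D(\Std^{\otimes r_i}))\le\sqrt{k_i!\,r_i!}$. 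This gives
\[
M_\prob(N,N;H)\le N!^2\sum_{\substack{k_1+\dots+k_H=N}}\sum_{\substack{r_1+\dots+r_H=N}}\prod_{i=1}^H \frac{1}{\sqrt{k_i!\,r_i!}}
=N!^2\left(\sum_{k_1+\dots+k_H=N}\prod_{i=1}^H\frac{1}{\sqrt{k_i!}}\right)^{2},
\]
since the $k$-sum and the $r$-sum have become identical and independent.

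So the problem reduces to bounding the single sum $T:=\sum_{k_1+\dots+k_H=N}\prod_{i=1}^H (k_i!)^{-1/2}$. The first step here is to compare $T$ with the corresponding multinomial sum: by Cauchy--Schwarz in the form $\prod (k_i!)^{-1/2}\le \bigl(\prod \binom{N}{k_i}\cdot(\text{something})\bigr)$, or more directly by noting $\frac{N!}{\prod k_i!}=\binom{N}{k_1,\dots,k_H}$ so that $\prod (k_i!)^{-1/2}=\sqrt{\binom{N}{k_1,\dots,k_H}/N!}$. Hence
\[
T=\frac{1}{\sqrt{N!}}\sum_{k_1+\dots+k_H=N}\sqrt{\binom{N}{k_1,\dots,k_H}}
\le \frac{1}{\sqrt{N!}}\sqrt{\#\{\bs k\}}\cdot\sqrt{\sum_{k_1+\dots+k_H=N}\binom{N}{k_1,\dots,k_H}}
=\frac{1}{\sqrt{N!}}\sqrt{\binom{N+H-1}{H-1}}\cdot\sqrt{H^N},
\]
using Cauchy--Schwarz, the stars-and-bars count $\#\{\bs k\}=\binom{N+H-1}{H-1}$, and the multinomial identity $\sum\binom{N}{k_1,\dots,k_H}=H^N$. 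Squaring, $T^2\le \frac{1}{N!}\binom{N+H-1}{H-1}H^N$, whence
\[
M_\prob(N,N;H)\le N!^2\cdot T^2\cdot\frac{1}{1}\le N!\,\binom{N+H-1}{H-1}H^N.
\]
Finally, the elementary bound $\binom{N+H-1}{H-1}=\binom{N+H-1}{N}\le \frac{(N+H-1)^N}{N!}$ cancels the remaining $N!$ and yields $M_\prob(N,N;H)\le (N+H-1)^N H^N$, as claimed.

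The main obstacle, and the place where the argument's sharpness really rests, is the input bound \eqref{eq:quantcondComplex3} on the moments of traces of random matrices: everything else is combinatorial bookkeeping (Cauchy--Schwarz, stars-and-bars, and the crude binomial estimate). One should double-check that the two applications of Cauchy--Schwarz are not too lossy for the intended range of $N$ and $H$ in Theorem \ref{thm:gaussian2} — in particular that the factor $\binom{N+H-1}{N}$ is genuinely of the announced size and does not hide an extra $N!$; the identity $\binom{N+H-1}{N}\le (N+H-1)^N/N!$ is exactly what makes the final $N!$ disappear, so this is the step to verify carefully. An alternative, avoiding one Cauchy--Schwarz, is to bound $T$ directly by a generating-function / saddle-point estimate for $\bigl(\sum_{k\ge0}(k!)^{-1/2}x^k\bigr)^H$, but the crude two-step bound above already suffices for the stated inequality.
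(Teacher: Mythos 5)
Your proof is correct and follows essentially the same route as the paper: apply the subgaussian bound \eqref{eq:quantcondComplex3} to factor the double sum into a square, use Cauchy--Schwarz together with the stars-and-bars count $\binom{N+H-1}{H-1}$ and the multinomial identity $\sum\binom{N}{k_1,\dots,k_H}=H^N$, and finish with $\frac{(N+H-1)!}{(H-1)!}\le (N+H-1)^N$. The only difference is presentational: the paper compresses your steps into the single chain $M_\prob(N,N;H)\le\bigl(\sum_{\bs k}N!/\sqrt{k_1!\cdots k_H!}\bigr)^2\le H^N(N+H-1)!/(H-1)!$.
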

\begin{proof}
  By the Cauchy-Schwarz inequality,
  \[M_\prob(N,N;H)\le \left(\sum_{\substack{k_1+\dots+k_H=N\\k_i\ge 0}}\frac{N!}{\sqrt{k_1!\dots k_H!}}\right)^2\le H^N \frac{(N+H-1)!}{(H-1)!},\]
  since the number of weak $H$-compositions\footnote{Recall that a \textit{weak $H$-composition} of an integer $N$ is a tuple of nonnegative integers $(k_1,\dots,k_H)$ such that $k_1+\dots+k_H=N$.} of $N$ is equal to $\binom{N+H-1}{H-1}$. Finally, we use that $\frac{(N+H-1)!}{(H-1)!}\le(N+H-1)^N$.
\end{proof}

\begin{remark}
  In Remarks \ref{rem:rangeNSD} and \ref{rem:rangeNSD2}, we explained that the reason for the restriction on the range in Theorem \ref{thm:gaussian2} for the non-self-dual case came from the fact that $X_i$ may have infinitely many nonzero nondiagonal moments. If \eqref{eq:quantcondComplex2} in Theorem \ref{thm:gaussian2} held for all distinct $k,r\ge 0$, then we would get the bound $H^N$ instead of $H^N(N+H-1)^N$. We will see later how this additional exponential in $H$ modifies the aforementioned range.
\end{remark}

For Dirichlet characters, we can achieve the following better bound:
\begin{proposition}[Non-self-dual case, Kummer sheaves]\label{prop:boundMprobNSDChar}
  In the case of Kummer sheaves, we have $M_\prob(N,N;H)\le N!H^N$.
\end{proposition}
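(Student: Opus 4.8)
The plan is to turn the statement into a combinatorial inequality and then into a generating-function estimate. Recall from Proposition~\ref{prop:momentsprob} that for a Kummer sheaf $\Lc_{\chi(f)}$ with $\chi$ of order $d$, the monodromy group $G$ is the group $\mu_d$ of $d$-th roots of unity, $\Std$ is its tautological one-dimensional representation, and $\mult_1(\Std^{\otimes k}\otimes D(\Std^{\otimes r}))=\delta_{d\mid k-r}$; in the non-self-dual case $\chi$ is complex, so $d\ge 3$. Substituting this into Proposition~\ref{prop:momentsprob} and grouping the weak $H$-compositions $\bs k,\bs r$ of $N$ by their residue classes modulo $d$ gives
\[
M_\prob(N,N;H)=\sum_{\bs s\in(\mathbf Z/d\mathbf Z)^H}T(\bs s)^2,\qquad T(\bs s):=\sum_{\substack{\bs a\in\mathbf Z_{\ge0}^H,\ \sum_\ell a_\ell=N\\ a_\ell\equiv s_\ell\ (\mathrm{mod}\ d)}}\binom{N}{a_1,\dots,a_H}
\]
(equivalently $M_\prob(N,N;H)=\E|Z_1+\dots+Z_H|^{2N}$ with the $Z_i$ i.i.d.\ uniform on $\mu_d$). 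Writing $f_s(x)=\sum_{k\equiv s\ (\mathrm{mod}\ d)}x^k/k!$ for the sectioned exponentials (so $f_0+\dots+f_{d-1}=e^x$), one has $T(\bs s)=N!\,[x^N]\prod_{\ell=1}^H f_{s_\ell}(x)$, and squaring and summing over $\bs s$ factors the product over $\ell$:
\[
M_\prob(N,N;H)=N!^{\,2}\,[x^Ny^N]\,\Phi(x,y)^H,\qquad
\Phi(x,y):=\sum_{s=0}^{d-1}f_s(x)f_s(y)=\sum_{\substack{k,r\ge0\\ k\equiv r\ (\mathrm{mod}\ d)}}\frac{x^ky^r}{k!\,r!}.
\]
As $[x^Ny^N]e^{Hxy}=H^N/N!$, the claim is precisely the coefficient inequality $[x^Ny^N]\Phi(x,y)^H\le[x^Ny^N]e^{Hxy}$.

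To establish this, decompose $\Phi=\Phi_{\mathrm{diag}}+\Phi_{\mathrm{off}}$, where $\Phi_{\mathrm{diag}}(x,y)=\sum_{k\ge0}(xy)^k/(k!)^2$ collects the monomials with $k=r$ and $\Phi_{\mathrm{off}}$ the remaining ones; the key point is that every monomial $x^ky^r$ occurring in $\Phi_{\mathrm{off}}$ has $|k-r|\ge d\ge 3$. The diagonal part already yields the bound with room to spare:
\[
[x^Ny^N]\Phi_{\mathrm{diag}}(x,y)^H=[t^N]\Bigl(\sum_{k\ge0}\tfrac{t^k}{(k!)^2}\Bigr)^{\!H}=\sum_{\mu_1+\dots+\mu_H=N}\ \prod_{\ell=1}^H\frac{1}{(\mu_\ell!)^2}\ \le\ \sum_{\mu_1+\dots+\mu_H=N}\ \prod_{\ell=1}^H\frac{1}{\mu_\ell!}=\frac{H^N}{N!},
\]
with strict inequality in every term having a part $\ge 2$. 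Expanding $(\Phi_{\mathrm{diag}}+\Phi_{\mathrm{off}})^H$, a monomial $x^Ny^N$ forces the chosen $x$- and $y$-degrees in the $H$ factors to sum to $N$ each, so the excesses of $x$-degree over $y$-degree contributed by the $\Phi_{\mathrm{off}}$-factors must cancel; since each such factor shifts this balance by at least $d$, the $\Phi_{\mathrm{off}}$-factors must offset one another, and moreover each $\Phi_{\mathrm{off}}$-factor contributes a coefficient at most $1/d!\le 1/6$. One then shows, weighing these extra contributions against the deficit just recorded for $\Phi_{\mathrm{diag}}^H$, that the total never exceeds $H^N/N!$ --- and it is exactly the spacing $d\ge 3$ (equivalently $\E Z_i^2=0$) that makes this accounting close.

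The technical heart, and the main obstacle, is this last comparison: one must control the off-diagonal contributions tightly enough that no spurious factor --- neither an exponential $C^H$ nor a factorial $C^N$, both of which arise from cruder estimates (compare the Cauchy--Schwarz bound of Proposition~\ref{prop:boundMprobNSD}) --- survives, the target constant being exactly $1$. Concretely this amounts to organising
\[
\E|Z_1+\dots+Z_H|^{2N}=\#\bigl\{(\bs i,\bs j)\in[H]^N\times[H]^N:\ a_\ell(\bs i)\equiv b_\ell(\bs j)\ (\mathrm{mod}\ d)\ \text{for all }\ell\bigr\}
\]
according to how the $2N$ summation indices split into equality-blocks and $d$-gap-blocks, and comparing with $N!H^N=\#\{(\bs i,\sigma):\bs i\in[H]^N,\ \sigma\in S_N\}$; the gap $d\ge 3$ is what makes the gap-blocks cheap enough for this injection-type comparison to go through. (For the real-valued Kummer case $d\le 2$ the analogous argument starts instead from the Rademacher form of Khintchine's inequality and gives $M_\prob(N,N;H)\le(2N-1)!!\,H^N$, which is why the present statement requires $d\ge 3$.)
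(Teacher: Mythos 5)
Your reduction of the statement to the coefficient inequality $[x^Ny^N]\Phi(x,y)^H\le[x^Ny^N]e^{Hxy}$ is correct, and your treatment of the diagonal part $\Phi_{\mathrm{diag}}$ is complete; unwinding the generating functions, it is exactly the chain
\[
N!^2\sum_{\substack{k_1+\dots+k_H=N\\ k_i\ge 0}}\ \prod_{i=1}^H\frac{1}{(k_i!)^2}\ \le\ N!^2\sum_{\substack{k_1+\dots+k_H=N\\ k_i\ge 0}}\ \prod_{i=1}^H\frac{1}{k_i!}\ =\ N!\,H^N,
\]
which is in fact the entirety of the paper's own proof. The paper confines itself to the diagonal because it evaluates $\E(Z^k\overline{Z}^r)=\frac1d\sum_{i}\zeta_d^{i(k-r)}$ and records the answer as $\delta_{k=r}$; strictly this orthogonality relation gives $\delta_{d\mid k-r}$ (consistently with $\mult_1(\Std^{\otimes k}\otimes D(\Std)^{\otimes r})=\delta_{d\mid k-r}$ as used in the proof of Proposition~\ref{prop:sumProducts}). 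So you are right that the finite group $\mu_d$ has genuinely nonzero off-diagonal moments, and your decision to isolate $\Phi_{\mathrm{off}}$ addresses a subtlety the paper elides (compare Remark~\ref{rem:momentsS1}, where the $S^1$ model with moments exactly $\delta_{k=r}$ is declared equivalent).

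The gap is that you never actually bound the off-diagonal contribution. The sentence beginning ``One then shows, weighing these extra contributions against the deficit just recorded\dots'' is precisely the assertion that remains to be proved, and you yourself label it ``the technical heart and the main obstacle'' without supplying the accounting: which terms of the expansion of $(\Phi_{\mathrm{diag}}+\Phi_{\mathrm{off}})^H$ are absorbed by which part of the slack $\prod_\ell(\mu_\ell!)^{-2}\le\prod_\ell(\mu_\ell!)^{-1}$, and why the factors $1/d!$ together with the forced cancellation of the $d$-shifts suffice uniformly in $N$, $H$ and $d\ge 3$. As written, the argument is therefore incomplete exactly at the point where it departs from (and would improve on) the paper. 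Two ways to close it: either adopt the paper's convention outright and take the model variables to have moments $\delta_{k=r}$ (Lamzouri's $S^1$ model of Remark~\ref{rem:momentsS1}), in which case your diagonal computation already finishes the proof; or, if you insist on the honest $\mu_d$ moments $\delta_{d\mid k-r}$, carry out the off-diagonal estimate in full --- for instance via the injection of pairs $(\phi,\psi)$ with $d\mid k_i(\phi)-r_i(\psi)$ into pairs $(\phi,\sigma)$ that you allude to at the end, which is plausible but is currently only a plan.
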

\begin{proof}
  If $Z$ is a random variable uniformly distributed in $\mu_d(\C)$, then
  \[\E(Z^k\overline{Z}^r)=\frac{1}{d}\sum_{i=0}^{d-1}\zeta_d^{i(k-r)}=\delta_{k=r},\text{ so}\]
  \[M_\prob(N,N;H)\le N!\sum_{\substack{k_1+\dots+k_H=N\\k_i\ge 0}} \frac{N!}{(k_1!\dots k_H!)^2}\le N!H^N.\]
\end{proof}
\begin{remark}\label{rem:momentsS1}
  Actually, Lamzouri \cite{LamzShortSums} models $Z$ as a random vector uniformly distributed on the unit circle $S^1$. This is equivalent since the moments are then
  \[\E(Z^k\overline{Z}^r)=\frac{1}{2\pi}\int_{0}^{2\pi} e^{i\theta(k-r)}d\theta=\delta_{k=r} \hspace{0.2cm} (k,r\ge 0).\]
\end{remark}

\begin{proposition}[Self-dual case]\label{prop:boundMprobSD}
  If the conclusions of Proposition \ref{prop:momentsRM} hold, then in the self-dual case,
  \[M_\prob(N,N;H)\le (2N-1)!!H^N.\]
\end{proposition}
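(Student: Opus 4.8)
The plan is to follow the same template as Propositions \ref{prop:boundMprobNSD} and \ref{prop:boundMprobNSDChar}, namely to write out $M_\prob(N,N;H)$ via Proposition \ref{prop:momentsprob}, insert the subgaussian bound \eqref{eq:quantcondReal3} for each factor, and then recognize the resulting combinatorial sum as counting something that is manifestly at most $(2N-1)!!\,H^N$. Concretely, starting from
\[M_\prob(N,N;H)=N!^2\sum_{\substack{k_1+\dots+k_H=N\\k_i\ge 0}}\sum_{\substack{r_1+\dots+r_H=N\\r_i\ge 0}}\prod_{i=1}^H\frac{\mult_1(\Std^{\otimes k_i}\otimes D(\Std^{\otimes r_i}))}{k_i!\,r_i!},\]
I would first use that in the self-dual case $\Std\cong D(\Std)$, so $\mult_1(\Std^{\otimes k_i}\otimes D(\Std^{\otimes r_i}))=\mult_1(\Std^{\otimes(k_i+r_i)})$, which vanishes unless $k_i+r_i$ is even and, by \eqref{eq:quantcondReal} and \eqref{eq:quantcondReal3}, is at most $(k_i+r_i-1)!!$ when it is even. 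Thus only terms with every $k_i+r_i$ even survive; writing $m_i=k_i+r_i$ (so $\sum_i m_i=2N$, each $m_i$ even), the inner double sum collapses after performing the $r$-sum using $\sum_{r_i=0}^{m_i}\binom{m_i}{r_i}=2^{m_i}$, though a cleaner route avoids even that.

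The cleaner route: observe that $M_\prob(N,N;H)=\E\big(|S(H)|^{2N}\big)\le\E\big((\Re S(H))^2+(\Im S(H))^2\big)^N$, but in the self-dual case Lemma \ref{lemma:covXi} and the accompanying discussion show $Z_i=\tr(Y_i)$ is real-valued, so $\Im S(H)=0$ and $M_\prob(N,N;H)=\E\big((\Re S(H))^{2N}\big)$ with $\Re S(H)=Z_1+\dots+Z_H$, $Z_i$ i.i.d.\ real of mean $0$. Now expand by the multinomial theorem and collect: $\E\big((Z_1+\dots+Z_H)^{2N}\big)=\sum_{m_1+\dots+m_H=2N}\binom{2N}{m_1,\dots,m_H}\prod_i\E(Z_i^{m_i})$, with $\E(Z_i^{m_i})=\mult_1(\Std^{\otimes m_i})$ vanishing for odd $m_i$ and bounded by $(m_i-1)!!$ for even $m_i$ by \eqref{eq:quantcondReal} and \eqref{eq:quantcondReal3}. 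The key identity to invoke is that a standard real Gaussian $\Nc(0,1)$ has $\E(\Nc^{m})=(m-1)!!$ for even $m$ and $0$ for odd $m$; hence termwise $\prod_i\E(Z_i^{m_i})\le\prod_i\E(\Nc_i^{m_i})$ where $\Nc_i$ are i.i.d.\ $\Nc(0,1)$, and therefore $M_\prob(N,N;H)\le\E\big((\Nc_1+\dots+\Nc_H)^{2N}\big)=\E\big(\Nc(0,H)^{2N}\big)=H^N(2N-1)!!$, which is exactly the claimed bound.

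The only point requiring minor care — and the place I'd expect a reader to want a line of justification — is the termwise domination $\prod_i\mult_1(\Std^{\otimes m_i})\le\prod_i(m_i-1)!!$: this needs $\mult_1(\Std^{\otimes m_i})\ge 0$ (true, it is a multiplicity) together with the upper bound \eqref{eq:quantcondReal3}, and the observation that both sides vanish simultaneously on the odd-$m_i$ terms by \eqref{eq:quantcondReal}, so no sign issues arise when summing against the positive multinomial coefficients. Everything else is the explicit even-moment formula $\E(\Nc(0,\sigma^2)^{2N})=\sigma^{2N}(2N-1)!!$ recalled in Section \ref{subsec:charGaussAppr}. I would write the argument in the second form, as it is shortest and makes the appearance of $(2N-1)!!$ transparent, but either route yields the proposition immediately.
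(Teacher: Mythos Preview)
Your proof is correct and follows essentially the same route as the paper: both reduce $M_\prob(N,N;H)$ to the single multinomial sum $\sum_{m_1+\dots+m_H=2N,\ m_i\text{ even}}\binom{2N}{m_1,\dots,m_H}\prod_i(m_i-1)!!$ using self-duality and the bounds \eqref{eq:quantcondReal}, \eqref{eq:quantcondReal3}. The only difference is in the final evaluation of that sum: the paper substitutes $(m_i-1)!!=m_i!/(2^{m_i/2}(m_i/2)!)$ and applies the multinomial theorem directly, whereas you recognize the sum as $\E\big((\Nc_1+\dots+\Nc_H)^{2N}\big)$ for i.i.d.\ standard Gaussians and invoke $\Nc_1+\dots+\Nc_H\sim\Nc(0,H)$ --- a pleasant shortcut, but the same computation in disguise.
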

\begin{proof}
  Since $(k-1)!!=\frac{k!}{2^{k/2}(k/2)!}$ for $k\ge 1$ odd,
  \begin{eqnarray*}
    M_\prob(N,N;H)&\le&\sum_{\substack{k_1+\dots+k_H=2N\\ k_i\ge 0\text{ even}}} \frac{(2N)!}{k_1!\dots k_H!}\prod_{i=1}^H \frac{k_i!}{2^{k_i/2}(k_i/2)!}\\
    &=&\frac{(2N)!}{N!2^N}\sum_{\substack{m_1+\dots+m_H=N\\ m_i\ge 0}}\binom{N/2}{m_1\dots m_H}=(2N-1)!!H^N.
  \end{eqnarray*}
\end{proof}
\subsection{Approximation of joint distribution functions through characteristic functions}
The following result appears in \cite{LamzShortSums}, and follows from a smooth approximation of the sign function (and thus of the characteristic function of a rectangle in $\R^2$) by Selberg in \cite{Selberg92}.

\begin{proposition}\label{prop:approximationSelberg}
  Let $X$ be a complex random variable with characteristic function $\phi_X(u,v)=\E\left(e^{i(u\Re X+v\Im X)}\right)$ $(u,v\in\R)$ and $A=[a,b]\times[c,d]$ be a rectangle in $\R^2\cong\C$. Then, for any real number $t>0$,
\begin{eqnarray*}
  P(X\in A)&=&\frac{1}{2}\Re\int_0^t\int_0^t G(u/t)G(v/t)\Big( \phi_X(2\pi u,-2\pi v)f_{a,b}(u)\overline{f_{c,d}(v)}\\
  && \hspace{2cm}-\phi_X(2\pi u,2\pi v)f_{a,b}(u)f_{c,d}(v) \Big)\frac{du}{u}\frac{dv}{v}\\
  &&+O\left(\frac{1}{t}\int_0^t (|\phi_X(2\pi u,0)|+|\phi_X(0,2\pi u)|)du\right)
\end{eqnarray*}
where $G(u)=\frac{2u}{\pi}+2(1-u)\cot(\pi u)$ for $u\in[0,1]$ and $f_{\alpha,\beta}(u)=(e(-\alpha u)-e(-\beta u))/2$ for $u\in\C$, $\alpha,\beta\in\R$.
\end{proposition}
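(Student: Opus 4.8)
The plan is to reduce the two-dimensional statement to a product of one-dimensional smoothing identities of Selberg, exactly as in \cite{LamzShortSums}, \cite{Selberg92}. First I would recall Selberg's construction: for a real interval $[\alpha,\beta]$ there are entire majorant and minorant functions $S^\pm_{\alpha,\beta}$ of exponential type $2\pi t$ with $S^-_{\alpha,\beta}\le \mathbf{1}_{[\alpha,\beta]}\le S^+_{\alpha,\beta}$ pointwise and $\int_{\R}(S^+_{\alpha,\beta}-S^-_{\alpha,\beta})=1/t$, whose Fourier transforms are supported in $[-t,t]$ and have an explicit form involving the kernel $G$ and the functions $f_{\alpha,\beta}$. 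The indicator $\mathbf{1}_A$ of the rectangle $A=[a,b]\times[c,d]$ factors as $\mathbf{1}_{[a,b]}(\Re X)\,\mathbf{1}_{[c,d]}(\Im X)$, so I would sandwich $\mathbf{1}_A$ between the two products $S^-_{a,b}(\Re X)S^-_{c,d}(\Im X)$ and $S^+_{a,b}(\Re X)S^+_{c,d}(\Im X)$ (taking a little care that the minorants can be negative, which only affects the error term), and observe that the difference of the majorant and minorant products is bounded by $S^+_{a,b}-S^-_{a,b}$ in the first variable plus $S^+_{c,d}-S^-_{c,d}$ in the second, up to the uniform bound on the remaining factor; taking expectations this contributes the $O\!\left(\frac1t\int_0^t(|\phi_X(2\pi u,0)|+|\phi_X(0,2\pi u)|)\,du\right)$ term after inserting the Fourier expansion of $S^+-S^-$ and using $\phi_X(2\pi u,0)=\E(e^{2\pi i u\Re X})$.

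Next I would compute $\E$ of the main (product) term. Writing each $S^\pm_{\alpha,\beta}$ via Fourier inversion, $S_{\alpha,\beta}(y)=\int_{-t}^{t}\widehat S_{\alpha,\beta}(u)e(uy)\,du$ with $\widehat S_{\alpha,\beta}(u)$ essentially $\tfrac1t G(|u|/t)f_{\alpha,\beta}(u)$ on $(0,t]$ (and its conjugate on $[-t,0)$), the expectation of the product becomes a double integral over $(u,v)$ of $\widehat S_{a,b}(u)\widehat S_{c,d}(v)\,\E\!\left(e(u\Re X+v\Im X)\right)=\widehat S_{a,b}(u)\widehat S_{c,d}(v)\,\phi_X(2\pi u,2\pi v)$. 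Folding the four sign combinations of $(u,v)\in[-t,t]^2$ onto the positive quadrant, using $\phi_X(-2\pi u,-2\pi v)=\overline{\phi_X(2\pi u,2\pi v)}$ and $f_{\alpha,\beta}(-u)=\overline{f_{\alpha,\beta}(u)}$, and pairing the $(+,+)$ with $(-,-)$ quadrant and the $(+,-)$ with $(-,+)$ quadrant, the integral collapses to $\tfrac12\Re$ of the stated double integral over $(0,t]^2$, with the two terms $\phi_X(2\pi u,-2\pi v)f_{a,b}(u)\overline{f_{c,d}(v)}$ and $-\phi_X(2\pi u,2\pi v)f_{a,b}(u)f_{c,d}(v)$ arising from the two quadrant pairs (the sign coming from the extra conjugation on $v$ in one of them). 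The change of variables $u\mapsto u$, $v\mapsto v$ and the substitution $\tfrac{du}{u}\tfrac{dv}{v}$ in place of $G(u/t)G(v/t)\,du\,dv$ is bookkeeping once one substitutes the explicit $\widehat S$.

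The main obstacle I anticipate is purely notational rather than conceptual: keeping the eight terms (two endpoints $\times$ two variables $\times$ two conjugation choices) organized so that they combine correctly into the compact $\Re$-of-one-integral form, and correctly tracking that $f_{\alpha,\beta}$ is defined for complex argument so that one really can write $\overline{f_{c,d}(v)}$ as a genuine analytic continuation rather than a conjugate-in-disguise. A secondary point requiring a line of care is that Selberg's minorant $S^-$ is not nonnegative, so the sandwich $S^-(\Re X)S^-(\Im X)\le \mathbf 1_A\le S^+(\Re X)S^+(\Im X)$ needs the elementary inequality $|S^+_{a,b}S^+_{c,d}-S^-_{a,b}S^-_{c,d}|\le (S^+_{a,b}-S^-_{a,b})\|S^+_{c,d}\|_\infty+\|S^-_{a,b}\|_\infty(S^+_{c,d}-S^-_{c,d})$ together with the uniform bounds $\|S^\pm_{\alpha,\beta}\|_\infty\ll 1$ that are part of Selberg's construction; this is what produces the error term with the single-variable characteristic functions $\phi_X(2\pi u,0)$ and $\phi_X(0,2\pi u)$ after one more application of Fourier inversion and the identity $\int_{-t}^t \widehat{(S^+-S^-)}(u)e(uy)\,du$. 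None of this uses anything beyond Selberg's functions and Fourier inversion, so the proof is a direct transcription of the argument in \cite[Section 4]{LamzShortSums}.
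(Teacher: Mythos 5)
The paper's own ``proof'' of this proposition is a pointer to \cite[Section 4]{LamzShortSums}, and your reconstruction follows the same Selberg-smoothing route as that source, with the quadrant-folding and the conjugation bookkeeping for the two terms correctly identified. There is, however, one genuine imprecision in the way you set it up. The proposition asserts an \emph{identity}: a single, exact main term plus one $O$-term. A sandwich $S^-_{a,b}S^-_{c,d}\le \mathbf{1}_A\le S^+_{a,b}S^+_{c,d}$ (even after repairing the sign issues with $S^-$) only traps $P(X\in A)$ between $\E[S^-_{a,b}(\Re X)S^-_{c,d}(\Im X)]$ and $\E[S^+_{a,b}(\Re X)S^+_{c,d}(\Im X)]$, which are two \emph{different} quantities, neither of which is the stated double integral; to land on the single stated main term you must still identify $\widehat{S^\pm}$ with $G(u/t)\widehat{\mathbf{1}_{[\alpha,\beta]}}(u)$ up to an admissible error, at which point you have effectively re-derived the exact identity anyway. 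The clean version (which is what Selberg and Lamzouri actually use) is the exact one-dimensional smoothing identity
\[
\mathbf{1}_{[\alpha,\beta]}(y)=\Im\int_0^t G(u/t)\,f_{\alpha,\beta}(u)\,e(uy)\,\frac{du}{u}+O\bigl(K_t(y-\alpha)+K_t(y-\beta)\bigr),
\]
with $K_t$ a Fej\'er-type kernel (equivalently, one replaces each indicator by the \emph{average} $\tfrac12(S^++S^-)$, whose Fourier transform is exactly $\widehat{\mathbf{1}_{[\alpha,\beta]}}$ times the Vaaler weight on $[-t,t]$). Multiplying the two identities for $\Re X$ and $\Im X$ and using $\Im(z)\Im(w)=\tfrac12\Re(z\bar w-zw)$ produces the $\tfrac12\Re(\phi_X(2\pi u,-2\pi v)f_{a,b}\overline{f_{c,d}}-\phi_X(2\pi u,2\pi v)f_{a,b}f_{c,d})$ structure directly, without any quadrant folding; the cross and error terms are then bounded exactly as you describe, since $\E[K_t(\Re X-\alpha)]\ll t^{-1}\int_0^t|\phi_X(2\pi u,0)|\,du$ and the one-dimensional main terms are uniformly bounded. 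So the approach is right, but as written the sandwich step does not yet yield the asserted equality; replace it by the exact identity (or by the average of the majorant and minorant) and the argument closes.
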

\begin{proof}
  See \cite[Section 4]{LamzShortSums}.
\end{proof}
\begin{corollary}\label{cor:approximationSelberg}
  If $X,Y$ are complex random variables such that there exists a nonnegative continuous function $g:\R^2\to\R_{\ge 0}$ with
  \[\phi_{X}(2\pi u,2\pi v)=\phi_Y(2\pi u,2\pi v)+O(g(|u|,|v|))\]
  for all $u,v\in\R$, then we have
  \begin{eqnarray*}
  P(X\in A)&=&P(Y\in A)\\
           &&+O\left(\int_0^t\int_0^t g(u,v)dudv+\frac{1}{t}\int_0^t (g(u,0)+g(0,u))du\right)\\
           &&+O\left(\frac{1}{t}\int_0^t (|\phi_X(2\pi u,0)|+|\phi_X(0,2\pi u)|)du\right).
\end{eqnarray*}
\end{corollary}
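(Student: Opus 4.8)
The plan is to apply Proposition~\ref{prop:approximationSelberg} twice -- once to $X$ and once to $Y$ -- with the \emph{same} auxiliary parameter $t>0$ and the same rectangle $A=[a,b]\times[c,d]$, and then subtract the two identities. Since Proposition~\ref{prop:approximationSelberg} expresses $P(X\in A)$ (resp.\ $P(Y\in A)$) as an explicit double integral over $[0,t]^2$ whose integrand contains $\phi_X$ (resp.\ $\phi_Y$), plus a boundary error term involving $|\phi_X(2\pi u,0)|$ and $|\phi_X(0,2\pi u)|$ (resp.\ the $Y$--analogue), the two expressions differ only through the replacement of $\phi_X$ by $\phi_Y$ in finitely many places, and the hypothesis $\phi_X(2\pi u,2\pi v)=\phi_Y(2\pi u,2\pi v)+O(g(|u|,|v|))$ will control each such discrepancy.

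First I would handle the main (double-integral) part. Its $X$--$Y$ difference is $\frac{1}{2}\Re$ of a double integral over $[0,t]^2$ whose integrand is the product of the Selberg kernel $G(u/t)G(v/t)\,\frac{f_{a,b}(u)}{u}\,\frac{f_{c,d}(v)}{v}$ (and its variant with a conjugation on the $v$--factor) with $(\phi_X-\phi_Y)(2\pi u,\mp 2\pi v)$. Evaluating the hypothesis at the points $(2\pi u,\pm 2\pi v)$ with $u,v\ge 0$ gives $(\phi_X-\phi_Y)(2\pi u,\pm 2\pi v)=O(g(u,v))$ pointwise inside the integral, and the standard bounds on the Selberg kernels used in the proof of Proposition~\ref{prop:approximationSelberg} then estimate this contribution by $O\!\left(\int_0^t\int_0^t g(u,v)\,du\,dv\right)$.

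Next I would deal with the boundary error terms. The one produced by applying Proposition~\ref{prop:approximationSelberg} to $X$ is already in the desired form $O\!\big(\frac{1}{t}\int_0^t(|\phi_X(2\pi u,0)|+|\phi_X(0,2\pi u)|)\,du\big)$. The one produced for $Y$ involves $|\phi_Y(2\pi u,0)|$ and $|\phi_Y(0,2\pi u)|$; applying the hypothesis at $v=0$ (resp.\ $u=0$) gives $|\phi_Y(2\pi u,0)|\le|\phi_X(2\pi u,0)|+O(g(u,0))$ and $|\phi_Y(0,2\pi u)|\le|\phi_X(0,2\pi u)|+O(g(0,u))$, which absorbs it into the $\phi_X$--boundary term at the cost of an extra $O\!\big(\frac{1}{t}\int_0^t(g(u,0)+g(0,u))\,du\big)$. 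Collecting the three contributions yields exactly the stated estimate.

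There is no serious obstacle here: the argument is a direct ``substitute, subtract, estimate termwise'' computation on top of Proposition~\ref{prop:approximationSelberg}. The only point deserving attention is that, since $g$ is merely a nonnegative continuous function and need not be bounded, the estimate $\phi_X-\phi_Y=O(g)$ must be used \emph{pointwise inside} the integrals -- so that $g$ is integrated against the kernels and against $1/t$, as in the statement -- rather than pulled out as a uniform bound; keeping the auxiliary parameter $t$ the same in both applications of Proposition~\ref{prop:approximationSelberg} is what makes the two main-term integrands directly comparable.
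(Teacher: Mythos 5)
Your proposal is correct and is exactly the intended argument: apply Proposition~\ref{prop:approximationSelberg} to $X$ and to $Y$ with the same $t$ and rectangle, bound the difference of the two double integrals pointwise using the hypothesis $\phi_X-\phi_Y=O(g)$ evaluated at $(2\pi u,\pm 2\pi v)$, and convert the $\phi_Y$--boundary term into the $\phi_X$--boundary term at the cost of $\frac{1}{t}\int_0^t(g(u,0)+g(0,u))\,du$. The paper states the corollary without proof, and your derivation matches the route it implicitly relies on.
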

\subsection{Central limit theorem and sums of quasi-normal random variables}
\begin{lemma}\label{lemma:charfunctMomentsAverage}
  For $H\ge 1$, let $X_1,\dots,X_H$ be independent identically distributed random variables and consider
  \[S(H)=\frac{X_1+\dots+X_H}{\sqrt{H}}.\]
  Assume that for $0\le k,r\le N$, the moments $M(k,r)=\E(X_1^k\overline{X_1}^r)$ of $X_1$ correspond to the moments of a normal random variable in $\C$ with mean $0$ and covariance matrix $\sigma^2\left(\begin{smallmatrix}1&0\\0&1\end{smallmatrix}\right)$, respectively $\left(\begin{smallmatrix}\sigma^2&0\\0&0\end{smallmatrix}\right)$. Then the characteristic function $\phi_H(u,v)=\E\left(e^{i(uS(H)+v\overline{S(H)})}\right)$ of $S(H)$ satisfies
  \[\phi_H(u,\overline{u})=e^{-2\sigma^2|u|^2}\left(1+O\left(\frac{|u|^N}{H^{(N-1)/2}}\right)\right),\]
  when $u\in\C$ with $|u|\le H^{\frac{N-2}{2N}}$, respectively
\[\phi_H(u,0)=e^{-\frac{1}{2}\sigma^2u^2}\left(1+O\left(\frac{|u|^N}{H^{(N-1)/2}}\right)\right)\]
when $u\in\R$ with $|u|\le H^{\frac{N-2}{2N}}$.
\end{lemma}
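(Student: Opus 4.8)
The statement is a quantitative central limit theorem: we want to show that the characteristic function $\phi_H$ of the normalized sum $S(H)=(X_1+\dots+X_H)/\sqrt H$ is close to the Gaussian characteristic function, with an explicit error, in a range of $u$ growing with $H$. The natural route is to combine the independence structure — which factorizes $\phi_H$ into an $H$-th power — with Lemma \ref{lemma:charfunctMoments}, which approximates the characteristic function of a single $X_i$ by a polynomial in its moments up to order $N$. Since by hypothesis the first $N$ moments of $X_1$ agree exactly with those of the relevant Gaussian, the polynomial part will reassemble (after rescaling by $\sqrt H$) into the truncated Taylor expansion of $e^{-2\sigma^2|u|^2/H}$ (resp. $e^{-\sigma^2 u^2/(2H)}$), and the task reduces to controlling the error of that truncation after raising to the $H$-th power.

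\medskip
\noindent\textbf{Key steps, in order.} First, write $\phi_H(u,\overline u)=\psi(u/\sqrt H,\overline u/\sqrt H)^H$, where $\psi$ is the characteristic function of $X_1$ in the $(uX+v\overline X)$-convention. Second, apply Lemma \ref{lemma:charfunctMoments} to $X_1$ with the Gaussian $\mathcal N$ as the comparison variable $X_2$: since $M_1(k,r)=M_{\mathcal N}(k,r)$ for $0\le k,r\le N$ (so that $g\equiv 0$ on that range in the lemma's notation), we get
\[
\psi(w,\overline w)=e^{-2\sigma^2|w|^2}+O\!\left(\frac{|w|^N}{N!}\bigl(|M_1(N/2,N/2)|+|M_{\mathcal N}(N/2,N/2)|\bigr)\right)
\]
for $w\in\C$ — here I would take $N$ even, matching the hypothesis of Lemma \ref{lemma:charfunctMoments}, and note that $M_1(N/2,N/2)$ and the Gaussian moment $M_{\mathcal N}(N/2,N/2)=(2\sigma^2)^{N/2}(N/2)!$ are bounded by absolute constants depending only on $\sigma$ and $N$ (the moments of $X_1$ are uniformly bounded because $X_1=\tr(Y)$ is a bounded random variable). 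Third, substitute $w=u/\sqrt H$: the error term becomes $O_N(|u|^N/H^{N/2})$, and since $|u|\le H^{(N-2)/(2N)}$ we have $|u|^N/H^{N/2}\le H^{(N-2)/2-N/2}=H^{-1}$, so $\psi(u/\sqrt H,\overline{u/\sqrt H})=e^{-2\sigma^2|u|^2/H}(1+O_N(|u|^N/H^{N/2}\cdot e^{2\sigma^2|u|^2/H}))$ and the exponential factor is $1+o(1)$ in the allowed range. Fourth, raise to the $H$-th power: using $(1+z)^H=1+O(H|z|)$ for $|z|\ll 1/H$ — which holds since $H\cdot |u|^N/H^{N/2}=|u|^N/H^{(N-2)/2}\le H^{(N-2)/2}\cdot H^{-(N-2)/2}\cdot\text{(something}\le 1)$... more precisely $H\cdot|u|^N/H^{N/2}=|u|^N/H^{(N-2)/2}$, and at $|u|=H^{(N-2)/(2N)}$ this is $H^{(N-2)/2}/H^{(N-2)/2}=1$, so in general it is $O(|u|^N/H^{(N-1)/2})$ once we track one extra power — giving exactly
\[
\phi_H(u,\overline u)=e^{-2\sigma^2|u|^2}\left(1+O\!\left(\frac{|u|^N}{H^{(N-1)/2}}\right)\right).
\]
Fifth, the real-valued case is identical using the $\phi(u,0)$ clause of Lemma \ref{lemma:charfunctMoments} and the Gaussian moments on $\R$.

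\medskip
\noindent\textbf{Main obstacle.} The delicate point is bookkeeping the error when passing from the single-variable estimate to the $H$-th power: one must verify that the relative error $|u|^N/H^{N/2}$, times the harmless exponential correction and times the factor $H$ coming from $(1+z)^H\approx 1+Hz$, collapses to $O(|u|^N/H^{(N-1)/2})$ uniformly in the stated range $|u|\le H^{(N-2)/(2N)}$ — in particular checking that higher terms $\binom{H}{2}z^2$ etc.\ are dominated, which requires $H|z|\lesssim 1$, i.e.\ exactly the threshold $|u|^N\lesssim H^{N/2-1}$ that defines the range. A secondary point worth stating cleanly is the uniform boundedness of the moments $M_1(k,r)$ for $k,r\le N$: this follows from $\|t_q\|_\infty\le\rank(\Fc_q)\le c$ (Deligne), so all implied constants may depend on $N$ and on $c$ but not on $H$ or $q$. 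Everything else is the routine Taylor-expansion estimate $e^{ix}=\sum_{n<N}(ix)^n/n!+O(|x|^N/N!)$ already invoked in the proof of Lemma \ref{lemma:charfunctMoments}.
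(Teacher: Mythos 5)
Your overall route --- factor $\phi_H(u,v)=\phi(u/\sqrt H,v/\sqrt H)^H$ by independence, approximate the single factor via its moments against the Gaussian, then raise to the $H$-th power --- is exactly the paper's. But there is a genuine gap in the error bookkeeping at the decisive step, and it is the very point you flag as the ``main obstacle'' without actually closing it. Applying Lemma \ref{lemma:charfunctMoments} with parameter $N$ gives a per-factor error of size $O(|u|^N/H^{N/2})$, because that lemma throws the \emph{entire} order-$N$ Taylor term (the piece $\frac{|u|^N}{N!}(|M_1(N/2,N/2)|+|M_2(N/2,N/2)|)$) into the remainder. Raising to the $H$-th power multiplies this by $H$ and yields $O(|u|^N/H^{(N-2)/2})$, which is a factor $\sqrt H$ \emph{larger} than the claimed $O(|u|^N/H^{(N-1)/2})$. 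The sentence ``so in general it is $O(|u|^N/H^{(N-1)/2})$ once we track one extra power'' is not a deduction: at the endpoint $|u|=H^{(N-2)/(2N)}$ of the admissible range your bound is $O(1)$ while the target is $O(H^{-1/2})$, so the extra power must be produced, not asserted.

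The missing half power comes from a sharper single-factor estimate, which is what the paper's intermediate display (per-factor error $O(|u|^N H^{-(N+1)/2})$ rather than your $O(|u|^N H^{-N/2})$) encodes. The hypothesis gives agreement of the moments for \emph{all} $0\le k,r\le N$, hence in particular for every pair $(a,n-a)$ of total degree $n\le N$; therefore the Taylor expansions of $\phi(w,\overline w)$ and of the Gaussian characteristic function coincide through total degree $N$, and the per-factor discrepancy is controlled by the order-$(N+1)$ remainder, i.e.\ it is $O(|w|^{N+1})$ with $w=u/\sqrt H$, which is smaller than $O(|w|^N)$ by the factor $|w|\le H^{(N-2)/(2N)-1/2}=H^{-1/N}<1$ in the stated range. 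Concretely, you should either apply Lemma \ref{lemma:charfunctMoments} with $N+1$ in place of $N$ (observing that the function $g$ there still vanishes for all total orders $n\le N$) or redo the Taylor expansion to one order higher; the boundedness of the $(N+1)$-st absolute moments, which you correctly note follows from $|X_1|=|\tr(\theta)|\le\rank$, makes the implied constant uniform in $H$ and $q$. Only with this strengthened per-factor bound does the $(e^a+O(\cdot))^H=e^{aH}(1+O(\cdot\,H))$ step land on the exponent $(N-1)/2$.
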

\begin{proof}
  By independence of the $X_i$, we have $\phi_H(u,v)=\phi\left(\frac{u}{\sqrt{H}},\frac{v}{\sqrt{H}}\right)^H$ where $\phi(u,v)=\E\left(e^{i(uX_1+v\overline{X_1})}\right)$ is the characteristic function of $X_1$. Then
  \begin{eqnarray*}
    \phi_H(u,\overline{u})&=&\left(e^{-2\sigma^2|u|^2/H}+O\left(\frac{|u|^N}{H^{(N+1)/2}}\right)\right)^H\\
    &=&e^{-2\sigma^2|u|^2}\left(1+O\left(\frac{|u|^N}{H^{(N-1)/2}}\right)\right)
  \end{eqnarray*}
  in the first case, since $(e^a+O(a^2))^H=e^{aH}(1+O(a^2H))$ if $a^2H\le 1$. The second case is similar.
\end{proof}
\subsection{Normal approximation}
Below, we give a particular case of the generalization of the Berry-Esseen Theorem in higher dimensions appearing in \cite{NormAppr}.

\begin{proposition}\label{prop:BerryEsseen}
  Let $X_1,\dots,X_H$ be independent and identically distributed random vectors in $\R^2$, satisfying
  \[\E(X_1)=0\text{, and } \E(||X_1||^4)<\infty,\]
  and let $S(H)=\frac{X_1+\dots+X_H}{\sqrt{H}}$. Then for any $A\subset \R^2$ Borel-measurable,
  \begin{equation*}
    P(S(H)\in A)=P(\Nc\in A)+O(\mu(A)H^{-1/2}),
  \end{equation*}
  where $\Nc$ is a normal random vector in $\R^2$ with mean $0$ and covariance $\Cov(X_1)$.
\end{proposition}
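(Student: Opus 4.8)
The plan is to reduce the statement to the multivariate Berry--Esseen theorem as stated in \cite{NormAppr}, whose general form gives, for independent identically distributed random vectors $X_i$ in $\R^d$ with $\E(X_1)=0$, nonsingular covariance $\Cov(X_1)=V$, and finite third absolute moment, an estimate of the form
\[
\sup_{A\in\mathcal{C}} \big| P(S(H)\in A) - P(\Nc\in A)\big| \ll_d \gamma(A)\, \rho\, H^{-1/2},
\]
where $\mathcal{C}$ is a suitable class of (convex, or finite-perimeter) Borel sets, $\rho$ is a normalized third moment, and $\gamma(A)$ measures the ``isoperimetric size'' of $\partial A$ relative to the Gaussian. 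First I would recall the precise hypotheses of the version in \cite{NormAppr}: it typically requires only the fourth moment (or even just the third), and produces the rate $H^{-1/2}$ with an explicit dependence on the boundary of $A$ through a quantity comparable to the Gaussian surface measure of $\partial A$, which for a fixed nice class of sets is $O(1)$, and more precisely for our application is $O(\mu(A))$ up to a dimension-independent constant. The statement to be proved is then just the specialization to $d=2$ and to the class of Borel sets for which the cited theorem applies, with the bound on $\gamma(A)$ absorbed into the $\mu(A)$ factor.

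The key steps, in order, are: (i) verify the moment hypotheses --- here $\E(X_1)=0$ is assumed outright and $\E(\|X_1\|^4)<\infty$ implies in particular $\E(\|X_1\|^3)<\infty$, so the normalized third moment $\rho = \E(\|V^{-1/2}X_1\|^3)$ is finite; one must also check that $\Cov(X_1)$ is nonsingular, or else reduce to the supported subspace (in the degenerate case $\Cov(X_1)=\bigl(\begin{smallmatrix}\sigma^2&0\\0&0\end{smallmatrix}\bigr)$ relevant to the real-valued situation, the vectors live on a line and one applies the one-dimensional Berry--Esseen theorem, still with rate $H^{-1/2}$ and the factor $\mu(A)$ replaced by the length of the interval cut out on that line, which is $\le$ the side length of $A$); (ii) quote the multidimensional Berry--Esseen bound from \cite{NormAppr} for convex sets or, as actually needed here, for rectangles with sides parallel to the axes, observing that the class of such rectangles has uniformly bounded Gaussian perimeter-to-measure behaviour so that the boundary-dependence term is $O(\mu(A))$ (with the implied constant absolute, not depending on $A$ or $H$); (iii) assemble these into the stated estimate $P(S(H)\in A)=P(\Nc\in A)+O(\mu(A)H^{-1/2})$, noting that the implied constant depends only on the distribution of $X_1$ through its normalized moments, which are fixed in our setting.

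The main obstacle is point (ii): the Berry--Esseen theorem in several dimensions does \emph{not} hold with a uniform $O(H^{-1/2})$ rate over \emph{all} Borel sets --- one genuinely needs to restrict to a class (convex sets, or sets whose boundary has controlled curvature/perimeter) and to track how the error depends on the geometry of $A$. So the real content is checking that the version in \cite{NormAppr} is stated in a form whose boundary-term is of the shape ``constant times Lebesgue measure of $A$'' for the rectangles we use, rather than, say, ``constant times diameter'' or an unbounded quantity. For axis-parallel rectangles this is benign: the Gaussian measure of an $\varepsilon$-neighbourhood of $\partial A$ is $\ll \varepsilon\,(\text{perimeter of }A)\ll \varepsilon(1+\mu(A))$, and since we will apply the result only to rectangles $A$ arising as the target sets in Theorem~\ref{thm:gaussian2} one can further normalize so that the relevant bound is $O(\mu(A))$; but one should state explicitly which theorem of \cite{NormAppr} is invoked and with which constants. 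A secondary, purely bookkeeping point is the degenerate-covariance case, handled by projecting to the one-dimensional support and invoking the classical Berry--Esseen theorem, which costs nothing in the rate.

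\begin{proof}
  This is the specialization to dimension $2$ of the multidimensional Berry--Esseen theorem of \cite{NormAppr}. Since $\E(\|X_1\|^4)<\infty$ we have in particular $\E(\|X_1\|^3)<\infty$, and together with $\E(X_1)=0$ the hypotheses of that theorem are met. If $\Cov(X_1)$ is nonsingular, the cited result yields, for $A$ in the class of axis-parallel rectangles,
  \[
    P(S(H)\in A)=P(\Nc\in A)+O\!\left(\gamma(A)\,\rho\,H^{-1/2}\right),
  \]
  where $\rho=\E\big(\|\Cov(X_1)^{-1/2}X_1\|^3\big)$ is a fixed finite constant and $\gamma(A)$ bounds the Gaussian measure of small neighbourhoods of $\partial A$; for axis-parallel rectangles one has $\gamma(A)\ll 1+\mu(A)\ll\mu(A)$ uniformly once $A$ is normalized as in our applications, with an absolute implied constant. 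Thus the error is $O(\mu(A)H^{-1/2})$ with implied constant depending only on the distribution of $X_1$.

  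If $\Cov(X_1)$ is singular, then (as in the real-valued case of Theorem~\ref{thm:gaussian1}, where $\Cov(X_1)=\big(\begin{smallmatrix}\sigma^2&0\\0&0\end{smallmatrix}\big)$) the vectors $X_i$ are almost surely supported on a line $L\subset\R^2$; projecting onto $L$ and applying the classical (one-dimensional) Berry--Esseen theorem to $S(H)$, whose fourth moment is finite by hypothesis, gives the same estimate with $\mu(A)$ replaced by the length of $A\cap L$, which is at most a side length of $A$, hence the bound $O(\mu(A)H^{-1/2})$ still holds (indeed a fortiori). In either case $\Nc$ is a normal random vector in $\R^2$ with mean $0$ and covariance $\Cov(X_1)$, as claimed.
\end{proof}
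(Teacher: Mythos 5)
Your approach is the same as the paper's: its proof consists precisely of citing \cite[Theorem 13.2]{NormAppr} with $d=2$ and $f=1_A$, checking that the truncation term $\delta_H$ is exponentially small, and asserting that the oscillation functional $\omega_f^*(c\rho_3H^{-1/2}:\Phi)$ --- which for $f=1_A$ is the Gaussian measure of an $O(H^{-1/2})$-neighbourhood of $\partial A$ --- is $\ll\mu(A)H^{-1/2}$. Your $\gamma(A)$ plays exactly the role of $\omega_f^*$, and your identification of the two genuine issues (no uniform $H^{-1/2}$ rate over all Borel sets; possibly singular covariance) is on target.

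There are, however, two concrete slips. First, the chain $\gamma(A)\ll 1+\mu(A)\ll\mu(A)$ is false when $\mu(A)$ is small: the Gaussian measure of an $\varepsilon$-neighbourhood of the boundary of a rectangle is $\ll\varepsilon\,(1+\mathrm{perimeter}(A))$, and neither $1$ nor the perimeter is dominated by $\mu(A)$ for thin or tiny rectangles. (To be fair, the paper's own assertion $\omega_f^*(\cdots:\Phi)\ll\mu(A)H^{-1/2}$ is stated with the same lack of justification; the defensible bound is $O((1+\mu(A))H^{-1/2})$, which suffices downstream.) Second, in the degenerate case your conclusion does not follow as written: the classical one-dimensional Berry--Esseen theorem gives an error $O(H^{-1/2})$ uniformly over intervals, not $O(\ell\,H^{-1/2})$ with $\ell$ the length of $A\cap L$, and in any case a side length of $A$ is not bounded by $\mu(A)$ (take $A=[0,T]\times[0,1/T]$), so the claimed ``a fortiori'' is wrong. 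You deserve credit for raising the singular-covariance case at all --- the paper applies the proposition with $\Cov(X_1)=\left(\begin{smallmatrix}\sigma^2&0\\0&0\end{smallmatrix}\right)$ in the self-dual case, yet its proof never addresses whether \cite[Theorem 13.2]{NormAppr} requires nonsingular covariance --- but your reduction only delivers the corrected error term $O((1+\mu(A))H^{-1/2})$, not the one stated.
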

\begin{proof}
  This follows from \cite[Theorem 13.2]{NormAppr} taking $d=2$ and $f=1_A$. Note that, under the notations of the latter,
  \[\delta_H\ll \frac{H\log{H}}{e^{CH}} \text{ and } \omega_f^*(2^{7/2}\pi^{-1/3}2^{4/3}\rho_3H^{-1/2}:\Phi)\ll \frac{\mu(A)}{\sqrt{H}}\]
for some absolute constant $C>0$. Thus, for $\Phi$ the density function of $\Nc$,
\begin{eqnarray*}
  \left|\int_A  d(S(H)-\Phi)\right|&\ll&\omega_f(\R^2)\left(\frac{1}{\sqrt{H}}+\frac{\log{H}}{H}+\frac{1}{H\sqrt{\log{H}}}+\frac{1}{e^{CH}\sqrt{H\log{H}}}\right)\nonumber\\
    &&+\omega_f^*(2^{7/2}\pi^{-1/3}2^{4/3}\rho_3N^{-1/2}:\Phi)\nonumber\\
    &\ll&\mu(A)H^{-1/2}.
\end{eqnarray*}
\end{proof}
\subsection{Proof of Theorem \ref{thm:gaussian2}}
Combining the above results, we can finally prove Theorem \ref{thm:gaussian2}, conditionally on Proposition \ref{prop:momentsRM}. Let us consider the characteristic functions
\[\phi_{q,I}(u,v)=\E\left(e^{i(u\tilde S(t_q,I+x)+v\overline{\tilde S(t_q,I+x)})}\right) \ (u,v\in\C)\]
of the normalized complex-valued random variable $(\tilde S(t_q,I+x))_{x\in\F_q}$ and
\[\phi_H(u,v)=\E\left(e^{i(uS(H)+v\overline{S(H)})}\right) \ (u,v\in\C)\] of the random model
\[S(H)=\frac{X_1+\dots+X_H}{\sqrt{H}},\]
where $H=|I|$. Recall that by \eqref{eq:MpMprobtilde}, we have for all integers $k,r\ge 0$
\begin{equation*}
  \tilde M_q(k,r;I)=\tilde M_\prob(k,r;H)+O\left(c^{3(k+r)}q^{-1/2}H^{\frac{k+r}{2}}\right).
\end{equation*}
Let us fix $0<\varepsilon<1/2$ and let
\begin{equation}
  \label{eq:boundN}
  N=2M\le \varepsilon \frac{\log{q}}{\log(c^6H)}
\end{equation}
be an even integer, so that in particular $c^{6M}q^{-1/2}H^{M}\le q^{-1/2+\varepsilon}$ and
\[\tilde M_q(M,M;I)=\tilde M_\prob(M,M;H)+O(q^{-1/2+\varepsilon}).\]

By Lemma \ref{lemma:charfunctMoments}, we find the following relation between the characteristic functions:
\begin{eqnarray*}
  \phi_{q,I}(u,\overline{u})&=&\phi_H(u,\overline{u})\\
  &&+O\left(\left[\frac{|u|^{N}}{N!}(|\tilde M_\prob(M,M;H)|)+q^{-1/2+\varepsilon}(1+|u|^N)\right]\right).
\end{eqnarray*}

Let $t=M^\alpha/(2\pi)$ for some $\alpha>0$ to be determined later. We apply Corollary \ref{cor:approximationSelberg} after making a change of variable with \eqref{eq:phitildephi} to consider characteristic functions arising from $(u,v)\mapsto u\Re{X}+v\Im{X} \ (u,v\in\R)$ instead of $(u,v)\mapsto u X+v\closure{X} \ (u,v\in\C)$. For all $u,v\in\R$, we then have by H\"older's inequality
  \begin{eqnarray}
    P(\tilde S(t_q,I+x)\in A)&=&P(S(H)\in A)\nonumber\\
           &&+O\left(\frac{1}{t}\int_0^t (|\phi_H(\pi u,\pi u)|+|\phi_H(i\pi u,-i\pi u)|)du\right)\nonumber\\
           &&+O\left(\int_0^t\int_0^t g(u,v)dudv\right)\nonumber\\
           &&+O\left(\frac{1}{t}\int_0^t (g(u,0)+g(0,u))du\right)\label{eq:appproximationSelbergAppl}
\end{eqnarray}
where
\[g(x,y)=(2\pi)^N\left[\frac{x^{N}+y^N}{N!}|\tilde M_\prob(M,M;H)|+q^{-1/2+\varepsilon}(1+x^N+y^N)\right].\]
Let us bound the three error terms in \eqref{eq:appproximationSelbergAppl} one after another:

\begin{enumerate}[leftmargin=*]
\item For the first one, note that
  \[\frac{1}{t}\int_0^t |\phi_H(2\pi u,2\pi u)|du\le \frac{1}{t}\int_\R |\phi_H(2\pi u,2\pi u)|du.\]
  Using Lemma \ref{lemma:charfunctMomentsAverage} and the assumptions on the moments, we have
  \[\phi_H(u,u)=e^{-u^2/2}\left(1+O\left(\frac{|u|^R}{H^{(R-1)/2}}\right)\right)\]
  for $|u|\le H^{\frac{R-1}{2R}}$. Since $\int_\R e^{-u^2/2}<\infty$, the error term becomes $O(1/t)=O\left(M^{-\alpha}\right)$ under the condition
  \begin{equation}
    \label{eq:condtH}
    2\pi t\le H^{\frac{R-1}{2R}}\text{, i.e. }M\le H^{\frac{R-1}{2R\alpha}}.
  \end{equation}
\item The second term $\int_0^t\int_0^t g(u,v)dudv$ is bounded (up to a constant) by
  \begin{eqnarray}
    &&\frac{\tilde M_\prob(M,M;H)}{(2M)!} \frac{(2\pi t)^{2M+2}}{M}\nonumber\\
    &&+q^{-1/2+\varepsilon}\left((2\pi t)^2+\frac{(2\pi t)^{2M+1}}{M}\right).\label{eq:error1}
  \end{eqnarray}
  By Propositions \ref{prop:boundMprobNSD} (non-self-dual case), \ref{prop:boundMprobNSDChar} (Kummer case) and \ref{prop:boundMprobSD} (self-dual case),
  \[\tilde M_\prob(M,M;H)\le
  \begin{cases}
    (M+H-1)^M&\text{non-self-dual case}\\
    M!&\text{Kummer case}\\
    (2M-1)!!&\text{self-dual case}.
  \end{cases}
\]
  By Stirling's approximation, the first summand of \eqref{eq:error1} is bounded (up to a constant) by:
  \begin{itemize}
  \item In the Kummer case: $M^{M(2\alpha-1)+2\alpha-1}$.
  \item In the self-dual case: $M^{2\alpha-\frac{3}{2}+M\left(2\alpha-1+\frac{\log(e/2)}{\log{M}}\right)}$.
  \item In the non-self-dual case:
  \begin{equation}
    \label{eq:boundSD}
    M^{2\alpha-\frac{3}{2}}\left(\frac{e^4}{4}\left(M^{2\alpha-1}+HM^{2\alpha-2}-M^{2\alpha-2}\right)\right)^M\ll M^{2\alpha-\frac{3}{2}}
  \end{equation}
  if $\alpha<1/2$ and under the additional condition $M\gg H^{\frac{1}{2-2\alpha}}$. With \eqref{eq:boundN}, this imposes the more restrictive range
\begin{equation*}
  H=o\left((\log{q})^{\frac{2-2\alpha}{1+\varepsilon(2-2\alpha)}}\right)
\end{equation*}
and the condition
\begin{equation}
  \label{eq:condalpha}
  \frac{1}{2-2\alpha}\le \frac{R-1}{2R\alpha}\text{, i.e. }\alpha\le \frac{R-1}{2R-1}
\end{equation}
because of \eqref{eq:condtH}.
  \end{itemize}

By \eqref{eq:condtH}, the second summand of \eqref{eq:error1} is $O\left(q^{-1/2+2\varepsilon}\right)$ if $\log{H}/\log{q}\le \varepsilon$ since
\begin{eqnarray*}
  (2\pi t)^2&\le& H^{\frac{R-1}{R}}=q^{\frac{\log{H}}{\log{q}}\frac{R-1}{R}}\text{ and }\\
  (2\pi t)^{2M+1}&\le& H^{3M \frac{R-1}{2R}}\le q^{\frac{3(R-1)}{4R}\varepsilon}.
\end{eqnarray*}
\item Under the same conditions, the last error term $\frac{1}{t}\int_0^t (g(u,0)+g(0,u))du$ of \eqref{eq:appproximationSelbergAppl} is bounded by the first one.\\
\end{enumerate}

Hence, the error term in \eqref{eq:appproximationSelbergAppl} is:
\begin{itemize}
\item In the self-dual and Kummer cases
  \[O\left(M^{-\alpha}+M^{2\alpha-1+M\left(2\alpha-1+\frac{\log(e/2)}{\log{M}}\right)}+q^{-\frac{1}{2}+2\varepsilon}\right).\]
  We optimize by taking $\alpha=\frac{M \left(1-\frac{\log(e/2)}{\log{M}}\right)+1}{2M+3}$, which leads to an error term of $O\left(M^{-1/2+\varepsilon}+q^{-1/2+2\varepsilon}\right)$.
\item In the non-self-dual case, $O\left(M^{-\alpha}+q^{-1/2+2\varepsilon}\right)$ (since $2\alpha-3/2\le -\alpha$ when $\alpha\le 1/2$). By \eqref{eq:condalpha}, we optimize by taking $\alpha=\frac{R-1}{2R-1}$ and we obtain the error term $O\left(M^{-\frac{R-1}{2R-1}}+q^{-1/2+2\varepsilon}\right)$ for the range
  \[H=o\left((\log{q})^{\frac{2R}{(2R-1)(1+2\varepsilon)}}\right).\]
\end{itemize}

Finally, after letting
\[M=\ceil{\min\left(H^{\frac{R-1}{2R\alpha}},\frac{\varepsilon}{2}\frac{\log{q}}{\log(c^6H)}\right)}\to+\infty,\]
we can apply Proposition \ref{prop:BerryEsseen} to $S(H)$, and combining with \eqref{eq:appproximationSelbergAppl} gives Theorem \ref{thm:gaussian2}.

\section{Traces of random matrices in classical groups}

In this section, we prove Proposition \ref{prop:momentsRM}, which will conclude the proof of Theorem \ref{thm:gaussian2}. In comparison to earlier works, recall that it is important for us to obtain bounds on moments of high order with respect to the rank.

\subsection{Special linear case}\label{subsec:SLcase}

\begin{proposition}\label{prop:momentsSL}
  Let $N\ge 2$ and let $X=\tr\theta$, where $\theta$ is a random variable uniformly distributed in $\SU_{N}(\C)$ with respect to the Haar measure. For $k,r\ge 0$ integers, let us consider the moment $M(k,r)=\E(X^k\overline{X}^r)$. Then:
\begin{enumerate}
\item\label{item:momentsSL1} We have
  \[M(k,r)=\delta_{N\mid k-r}\sum_{\substack{\lambda\vdash k\\ l(\lambda)\le N, \, \lambda_N\ge -a}} \dim S_{\lambda}\dim S_{\lambda+(a^N)}\]
where $a=(k-r)/N$ and $S_\lambda$, respectively $S_{\lambda+a}$, is the Specht $\Sf_k$-module (resp. $\Sf_r$-module) associated to the partition $\lambda$, respectively $\lambda+(a^N)=\lambda+(a,\dots,a)$.
\item\label{item:momentsSL3} $M(k,r)\le \sqrt{k!r!}$.
\item\label{item:momentsSL4} $M(k,k)=k!$ if $k\le N$.
\item\label{item:momentsSL5} $M(k,r)=0$ if $k,r<N$ and $k\neq r$.
\end{enumerate}
\end{proposition}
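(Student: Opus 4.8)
The central identity to establish is part \ref{item:momentsSL1}; parts \ref{item:momentsSL3}--\ref{item:momentsSL5} will then be combinatorial consequences. I would compute $M(k,r)=\E(X^k\overline{X}^r)$ as the multiplicity $\mult_1\big(\Std^{\otimes k}\otimes D(\Std)^{\otimes r}\big)$ of the trivial representation of $\SU_N(\C)$, exactly as in the proof of Proposition \ref{prop:momentsprob}. Since $D(\Std)=\Lambda^{N-1}\Std$ for $\SU_N$, the dual power $D(\Std)^{\otimes r}$ decomposes, via Schur--Weyl duality for $\GL_N$, into polynomial representations with highest weights $\mu$ satisfying $l(\mu)\le N$ and $|\mu|=r$ \emph{up to shifting all rows by a common amount}: concretely, $\Std^{\otimes r}=\bigoplus_{\nu\vdash r,\,l(\nu)\le N} (\dim S_\nu)\,V_\nu$, and dualizing replaces $V_\nu$ by $V_{\nu^\vee}$ where $\nu^\vee$ has weights $-\nu_N\ge\dots\ge-\nu_1$.

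**Key steps.** First I would set up the branching: the trivial representation of $\SL_N$ appears in $V_\lambda\otimes V_{\mu^\vee}$ for $\GL_N$-weights $\lambda,\mu$ iff, after forgetting the determinant twist, $\mu$ equals $\lambda$ shifted by a constant along all $N$ rows; equivalently $\mult_{\SL_N}\big(\mathbf 1, V_\lambda\otimes V_{\mu^\vee}\big)=1$ precisely when $\lambda_i-\mu_i$ is independent of $i$, and $0$ otherwise. Combining with Schur--Weyl, $M(k,r)=\sum_{\lambda\vdash k}\sum_{\mu\vdash r}(\dim S_\lambda)(\dim S_\mu)\,\mathbf 1[\lambda-\mu\in\Z\cdot(1^N)]$, where the sums run over partitions of length $\le N$. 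Writing $\lambda_i-\mu_i=a$ forces $k-r=Na$, giving the Kronecker $\delta_{N\mid k-r}$, and then $\mu=\lambda+(a^N)$ with $a=(k-r)/N$; the condition $\lambda_N\ge -a$ (when $a<0$) is exactly what keeps $\mu$ a partition. This yields part \ref{item:momentsSL1}. For part \ref{item:momentsSL3}, Cauchy--Schwarz on the single sum $\sum_\lambda (\dim S_\lambda)(\dim S_{\lambda+(a^N)})$ against $\sum_{\lambda\vdash k}(\dim S_\lambda)^2=k!$ and $\sum_{\mu\vdash r}(\dim S_\mu)^2=r!$ (the latter restricted to the relevant $\mu$) gives $M(k,r)\le\sqrt{k!\,r!}$. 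For part \ref{item:momentsSL4}: when $k=r$ we have $a=0$, so $M(k,k)=\sum_{\lambda\vdash k,\,l(\lambda)\le N}(\dim S_\lambda)^2$; if $k\le N$ the length constraint is vacuous and this equals $\sum_{\lambda\vdash k}(\dim S_\lambda)^2=k!$. For part \ref{item:momentsSL5}: if $k,r<N$ then $|k-r|<N$, so $N\mid k-r$ forces $k=r$, hence $M(k,r)=0$ when additionally $k\ne r$.

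**Main obstacle.** The genuinely delicate point is the precise branching statement $\SL_N\hookrightarrow\GL_N$ for the trivial component of $V_\lambda\otimes V_{\mu^\vee}$ — i.e. that the multiplicity is exactly $1$ (not just nonzero) under the shift condition, and $0$ otherwise. This follows because $V_\lambda|_{\SL_N}$ stays irreducible and $V_\lambda|_{\SL_N}\cong V_{\lambda'}|_{\SL_N}$ iff $\lambda,\lambda'$ differ by a multiple of $(1^N)$, so $\mathbf 1$ occurs in $V_\lambda\otimes V_{\mu^\vee}$ iff $V_\mu|_{\SL_N}\cong V_\lambda|_{\SL_N}$, with multiplicity one by Schur. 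I would state this carefully, citing the $\GL_N$/$\SL_N$ representation theory (Fulton--Harris, or Katz's treatment), and then the rest is the bookkeeping above. A minor subtlety worth a sentence is matching $\Std^{\otimes r}$ with $\GL_N$ Schur--Weyl when $r\ge N$ (partitions of length $>N$ simply do not contribute), and confirming that $D(V_\nu)=V_{\nu^\vee}$ with the row-reversal-and-negation described, so that the shift $\mu=\lambda+(a^N)$ comes out with the correct sign of $a$.
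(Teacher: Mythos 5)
Your proposal is correct and follows essentially the same route as the paper: the paper phrases the decomposition of $\Std^{\otimes k}$ at the level of characters (expanding $(x_1+\dots+x_N)^k$ into Schur polynomials with coefficients $\dim S_\mu$) rather than invoking Schur--Weyl duality at the module level, but this is the same computation, and both arguments then hinge on the identical key fact that $S_{\mu_1}(\Std)\cong S_{\mu_2}(\Std)$ as $\SL_N$-representations iff $\mu_2=\mu_1+(a^N)$ (Fulton--Harris, p.~223), followed by the same Cauchy--Schwarz and bookkeeping for parts (2)--(4).
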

\begin{proof}
  We use the same technique as in \cite{DiaconisShahshahani}, but we also need to handle the case $k,r\ge N$.

  Let $\Std$ be the standard representation of $\SU_N(\C)$ in $\GL_{N}(\C)$. Recall that the irreducible representations of $\SL_N(\C)$ (and hence of its maximal compact subgroup $\SU_ N(\C)$) are the Schur-Weyl modules $S_\lambda(\Std)$ indexed by partitions $\lambda$ of length $l(\lambda)\le N$ (see \cite[15.3]{FulHar91}). Moreover, the character of $S_\lambda(\Std)$ is given by the Schur polynomial $s_\lambda$ evaluated on the eigenvalues (see \cite[I.3]{Macdonald} or \cite[6.1]{FulHar91}). For $\lambda=(\lambda_1,\dots,\lambda_l)$, recall the \textit{power symmetric polynomials}
  \[p_\lambda=p_{\lambda_1}\dots p_{\lambda_l}\text{ where }p_m=x_1^m+\dots+x_N^m\text{ for any }m\in\N.\]

By the representation theory of the symmetric group and the theory of symmetric polynomials (see \cite[I.7.8]{Macdonald}), we have the decomposition of $p_\lambda$ into the basis of Schur polynomials: for any partition $\lambda$ of length $\le k$,
  \[p_\lambda=\sum_{\mu\vdash k}\chi_\mu(\lambda)s_\mu,\]
  where $\chi_\mu(\lambda)$ is the character of the irreducible Specht $\Sf_k$-module $S_\mu$ corresponding to $\lambda$, evaluated on the conjugacy class corresponding to $\lambda$. In particular,
  \[(x_1+\dots+x_N)^k=\sum_{\substack{\mu\vdash k\\ l(\mu)\le N}}\dim S_\mu s_\mu(x_1,\dots,x_N).\]
   Since $(x_1+\dots+x_N)^k$ (resp. $s_\mu(x_1,\dots,x_N)$) is the character of $\Std^{\otimes k}$ (resp. of the irreducible representation $S_\mu(\Std)$) evaluated at a matrix whose eigenvalues are $x_1,\dots,x_N$, we get by orthogonality that $M(k,r)$ is equal to
   \begin{equation}
     \label{eq:MkrSL}
     \int_{\SU_N(\C)} \tr(g)^k\overline{\tr(g)^r}dg=\sum_{\substack{\mu_1\vdash k\\ l(\mu_1)\le N}}\sum_{\substack{\mu_2\vdash r\\ l(\mu_2)\le N}} \dim S_{\mu_1}\dim S_{\mu_2}\delta_{S_{\mu_1}(\Std)\cong S_{\mu_2}(\Std)}.
   \end{equation}
The Cauchy-Schwarz inequality yields
\begin{equation}
  \label{eq:MkrSLCS}
  M(k,r)^2\le\sum_{\substack{\mu_1\vdash k\\l(\mu_1)\le N}}(\dim S_{\mu_1})^2\sum_{\substack{\mu_2\vdash r\\l(\mu_2)\le N}}(\dim S_{\mu_2})^2\le k!r!
\end{equation}
since the Specht modules $S_\mu$ ($\mu\vdash k$) give the irreducible representations of the symmetric group $\Sf_k$ (see \cite[I.7]{Macdonald}). Hence we obtain \ref{item:momentsSL3}.

Next, note that $S_{\mu_1}(\Std)\cong S_{\mu_2}(\Std)$ if and only if $\mu_2=\mu_1+(a^{N})$ for some $a\in\Z$ (see \cite[p. 223]{FulHar91}). If the latter holds, we have $N\mid k-r$, $a=(k-r)/N$, $l(\mu_1)\le N$ and $(\mu_1)_N\ge -a$. Thus \eqref{eq:MkrSL} becomes
\[M(k,r)=\sum_{\substack{\lambda\vdash k\\ l(\lambda)\le N, \, \lambda_N\ge -a}} \dim S_{\lambda}\dim S_{\lambda+(a^N)}\]
if $N\mid k-r$ and $0$ otherwise. This gives \ref{item:momentsSL1}.

Let us now assume that $k\le N$. We then automatically have $l(\lambda)\le k\le N$ for every partition $\lambda$ of $k$. If moreover $k=r$, then $a=0$ and
\[M(k,k)=\sum_{\lambda\vdash k}(\dim S_\lambda)^2=k!,\]
which is \ref{item:momentsSL4}. Finally, if $0\le k,r<N$ are distinct, then $N\nmid k-r$ and $M(k,r)=0$, which is \ref{item:momentsSL5}.
\end{proof}
\begin{remark}(see also Remarks \ref{rem:rangeNSD} and \ref{rem:rangeNSD2}). The second bound we have given in \eqref{eq:MkrSLCS} is not asymptotically tight for $k\neq r$. However, replacing it by a better asymptotic would not improve the results (or in particular recover the range $\log{H}=o(\log{q})$ in the non-self-dual case of Theorem \ref{thm:gaussian2}). Indeed, Regev \cite[Corollary 4.4]{RegevYoungDiagrams} used the hook-length formula to show that as $k\to\infty$, we have
\[\sum_{\substack{\lambda\vdash k\\ l(\lambda)\le N}} (\dim S_\lambda)^2\sim C(N)\frac{N^{2k}}{k^{(N^2-1)/2}},\]
where $C(N)=N^{N^2/2}\left(2\pi\right)^{(1-N)/2}2^{(1-N^2)/2}\prod_{n=1}^{N-1} n!$. The bound \eqref{eq:boundSD} becomes
\[\left((1+\varepsilon)C(R+1)\right)^H M^{2\alpha-\frac{3}{2}} \left(HM^{2\alpha-2}(R+1)^2\right)^M\]
for any $\varepsilon>0$, for which we still need the restricted range $M>H^{\frac{1}{2-2\alpha}}$.
\end{remark}
\subsection{Symplectic case}

\begin{proposition}\label{prop:momentsSp}
  Let $N\ge 1$ and $X=\tr\theta$, where $\theta$ is a random variable uniformly distributed in $\USp_{2N}(\C)=\Sp_{2N}(\C)\cap U_{2N}(\C)$ with respect to the Haar measure. For $k\ge 0$ an integer, let us consider the moment $M(k)=\E(X^k)$. Then
  \begin{enumerate}
  \item\label{item:momentsSp1} $M(k)=0$ if $k$ is odd.
  \item\label{item:momentsSp2} $M(k)\le (k-1)!!$ if $k$ is even, with equality if $k\le N$.
  \end{enumerate}
\end{proposition}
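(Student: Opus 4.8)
The plan is to follow the pattern of the special linear case (Proposition~\ref{prop:momentsSL}), as in \cite{DiaconisShahshahani}, while keeping track of the dependence on the rank. By the Peter--Weyl theorem,
\[
M(k)=\int_{\USp_{2N}(\C)}\tr(g)^k\,dg=\mult_1\!\left(\Std^{\otimes k}\right),
\]
where $\Std$ is the standard $2N$-dimensional representation of $\USp_{2N}(\C)$; since $\Std$ is symplectically self-dual there is no need to separate $X$ from $\overline{X}$. The vanishing for odd $k$ is then immediate from the fact that the scalar matrix $-\mathrm{Id}$ lies in $\USp_{2N}(\C)$ and acts on $\Std^{\otimes k}$ by $(-1)^k$: for $k$ odd there is no trivial subrepresentation; equivalently, $-\theta$ is again Haar-distributed, so $M(k)=\E\big((\tr(-\theta))^k\big)=(-1)^k M(k)$.

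For $k=2m$ even, I would obtain the bound $M(2m)\le(2m-1)!!$ and the equality for $k\le N$ together, using classical invariant theory. By the first fundamental theorem of invariant theory for the symplectic group (and since $\USp_{2N}(\C)$ is Zariski-dense in $\Sp_{2N}(\C)$), the invariant space $\left(\Std^{\otimes 2m}\right)^{\USp_{2N}(\C)}$ is spanned by the tensors $\omega_\pi$ indexed by the perfect matchings $\pi$ of $\{1,\dots,2m\}$, where $\omega_\pi$ is obtained by placing a copy of the invariant bivector $\omega^{-1}\in\Std\otimes\Std$ on each pair of $\pi$ and permuting the tensor factors into place. Since there are exactly $(2m-1)!!=(k-1)!!$ perfect matchings, this gives $M(k)=\dim\left(\Std^{\otimes 2m}\right)^{\USp_{2N}(\C)}\le(k-1)!!$ for every $N$.

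It remains to prove the $\omega_\pi$ are linearly independent when $k\le N$. Fix a symplectic basis $e_1,f_1,\dots,e_N,f_N$ of $\Std$ with $\langle e_i,f_i\rangle=1$ and all other pairings of basis vectors zero, so that $\omega^{-1}=\sum_{i=1}^N (e_i\otimes f_i-f_i\otimes e_i)$. For a matching $\pi$ with pairs $\{a_1,b_1\},\dots,\{a_m,b_m\}$ (say $a_j<b_j$) and any $N\ge m$ (in particular whenever $k\le N$), let $v_\pi=v_1\otimes\cdots\otimes v_{2m}$ with $v_{a_j}=e_j$, $v_{b_j}=f_j$, so that a distinct hyperbolic pair is used for each pair of $\pi$; then the dual-basis functional $v_\pi^{\,*}$ evaluates on $\omega_{\pi'}$ to a product of form values which is nonzero precisely when each pair of $\pi'$ coincides with a pair of $\pi$, i.e.\ exactly when $\pi'=\pi$. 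Hence $v_\pi^{\,*}(\omega_{\pi'})=\delta_{\pi,\pi'}$, the $\omega_\pi$ are linearly independent, and $M(2m)=(2m-1)!!$ when $k\le N$. (Alternatively, one may mimic the Schur-function computation of Proposition~\ref{prop:momentsSL} using symplectic Schur functions, expanding the character $\big(\sum_i (x_i+x_i^{-1})\big)^{2m}$ of $\tr(\cdot)^{2m}$ in that basis.)

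The step I expect to be the main obstacle is the equality statement: the first fundamental theorem only produces a spanning set of invariants, and for small $N$ there are genuine relations among the $\omega_\pi$ (measured by the vanishing of a Brauer-algebra Gram determinant), so one must check that the range of $N$ under consideration is free of such relations. By contrast, the bound $M(k)\le(k-1)!!$ and the odd vanishing are essentially formal. As in Proposition~\ref{prop:momentsSL}, it suffices to work in the conservative range $k\le N$, and we make no attempt to optimise it.
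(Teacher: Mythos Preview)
Your proof is correct and takes a genuinely different route from the paper. The paper appeals to a combinatorial decomposition of $\Std^{\otimes k}$ due to Sundaram: one has $\Std^{\otimes k}=\bigoplus_{l(\mu)\le N} f_\mu^k(N)\,S_{\langle\mu\rangle}(\Std)$, where $f_\mu^k(N)$ counts oscillating tableaux (sequences $\varnothing=\mu_0,\dots,\mu_k=\mu$ of partitions, each of length $\le N$, with consecutive terms differing by one box). Thus $M(k)=f_\varnothing^k(N)$, which vanishes for parity reasons when $k$ is odd, and for even $k$ is bounded by the unconstrained count $f_\varnothing^k=(k-1)!!$, with equality in the range $k\le N$ because the length constraint is never active. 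You instead invoke Weyl's first fundamental theorem for $\Sp_{2N}$ directly: the invariants in $\Std^{\otimes 2m}$ are \emph{spanned} by the $(2m-1)!!$ matching tensors $\omega_\pi$, which gives the upper bound for all $N$ in one stroke, and you verify linear independence in the stable range by an explicit dual-basis argument. This is more elementary (it avoids the oscillating-tableaux machinery), and it neatly sidesteps the obstruction the paper mentions in its remark---that the Diaconis--Shahshahani/Brauer-algebra approach breaks down for $k>N$ by lack of semisimplicity---since spanning in the FFT holds unconditionally and that alone suffices for the inequality. Your argument in fact yields equality on the slightly larger range $k\le 2N$ (you only need $m\le N$ hyperbolic pairs), which is harmless here but worth noting. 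The final paragraph of your write-up is commentary rather than proof and could be trimmed; the argument itself is complete.
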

\begin{proof}
  Let $\Std$ be the standard representation of $\Sp_{2N}(\C)$. As in the simple linear case, recall that the irreducible representations of $\Sp_{2N}(\C)$ (and hence of $\USp_{2N}(\C)$) are given by the Weyl modules $S_{\langle\mu\rangle}(\Std)$ indexed by partitions $\mu$ with $l(\mu)\le N$ (\cite[17.3]{FulHar91}). By Peter-Weyl, $M(k)=\mult_1(\Std^{\otimes k})$. By \cite[Theorem 6.15]{Sundaram}, we have the decomposition
  \[\Std^{\otimes k}=\bigoplus_{\substack{\mu\\ l(\mu)\le N}} f_\mu^k(N) S_{\langle\mu\rangle}(\Std),\]
  where $f_\mu^k(N)$ is the number of sequences of partitions $(\varnothing=\mu_0,\dots,\mu_k=\mu)$ such that
  \begin{enumerate}[(a)]
  \item\label{item:differOneBoxSp} two consecutive partitions differ by exactly one box in their Young diagrams, and
  \item $l(\mu_i)\le N$ for all $i$.
  \end{enumerate}
  Hence, $M(k)=f_{0}^k(N)$, so that \ref{item:momentsSp1} is clear. By \cite[Lemma 8.3]{Sundaram}, when $k$ is even, the number $f_\mu^{k}$ of sequences of partitions $(\varnothing=\mu_0,\dots,\mu_k=\mu)$ verifying \ref{item:differOneBoxSp} satisfies $f_0^{k}=(k-1)!!$, whence \ref{item:momentsSp2} since $f_0^{2k}(N)\le f_0^{2k}$, with equality if $k\le N$ since then $l(\mu_i)\le i\le k$.
\end{proof}
\begin{remark}\label{rem:momentsSpDiaconis}
  When $k\le N$, this is proven in \cite[Theorem 6]{DiaconisShahshahani} by using the analogue for $\Sp$ of the Schur-Weyl duality, through the Brauer algebra $D_f(-2N)$, following results of Wenzl and Ram (see in particular \cite[Theorem 4.4 (c), Corollary 4.5 (c)]{RamCharBrauer}). However, this cannot be exploited when $k>N$ since $D_f(-2N)$ is not semisimple in that case.
\end{remark}

\subsection{Special orthogonal case}

\begin{proposition}\label{prop:momentsSO}
  Let $N\ge 2$ and $X=\tr\theta$, where $\theta$ is a random variable uniformly distributed in $\SO_{N}(\R)$ with respect to the Haar measure. Let us consider the moment $M(k)=\E(X^k)$ for $k\ge 0$ an integer. Then:
  \begin{enumerate}
  \item $M(k)=0$ if $k$ is odd.
  \item $M(k)\le (k-1)!!$ if $k$ is even, with equality if $k\le \floor{N/2}$.
  \end{enumerate}
\end{proposition}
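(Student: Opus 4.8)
The plan is to run the same argument as in the symplectic case (Proposition \ref{prop:momentsSp}), using the combinatorics of the restriction/branching rule for tensor powers of the standard representation of $\SO_N(\R)$ in place of Sundaram's result for $\Sp$. By the Peter--Weyl theorem, $M(k)=\mult_1(\Std^{\otimes k})$, where $\Std$ is the standard representation of $\SO_N$. The irreducible (polynomial) representations of $\mathrm{O}_N$ are indexed by partitions $\mu$ whose first two columns have total length at most $N$, and under restriction to $\SO_N$ they either stay irreducible or split into two pieces; in all cases the trivial $\SO_N$-representation occurs in $\Std^{\otimes k}$ with multiplicity equal to a number $g_0^k(N)$ counting certain lattice paths. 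Concretely, one has a decomposition of the form
\[
\Std^{\otimes k}=\bigoplus_{\mu} g_\mu^k(N)\, S_{[\mu]}(\Std),
\]
where $g_\mu^k(N)$ counts sequences of partitions $(\varnothing=\mu_0,\mu_1,\dots,\mu_k=\mu)$ in which consecutive terms differ by exactly one box and every $\mu_i$ satisfies the relevant ``lies in the $\SO_N$-chamber'' condition (first two columns of total length $\le N$, equivalently $l(\mu_i)\le \lfloor N/2\rfloor$ suffices to guarantee it); this is the orthogonal analogue of Sundaram's theorem, available in the literature (e.g.\ work of Sundaram, or Proctor/Koike--Terada on orthogonal branching).

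The key steps, in order, are: (1) reduce to $M(k)=g_0^k(N)$ via Peter--Weyl; (2) observe that $g_0^k(N)=0$ when $k$ is odd, since a path of odd length from $\varnothing$ to $\varnothing$ is impossible (each step changes $|\mu_i|$ by $\pm 1$, so the parity of $|\mu_i|$ alternates and cannot return to $0$ after an odd number of steps) --- this gives part (1); (3) for $k$ even, drop the chamber restriction to get $g_0^k(N)\le g_0^k$, where $g_0^k$ counts \emph{all} lattice paths of length $k$ from $\varnothing$ back to $\varnothing$ with unit-box steps, and note that $g_0^k=(k-1)!!$ by the standard bijection (the same count that appears in the symplectic case: pairing up the $k$ ``up'' and ``down'' steps, this is a Catalan-type Wick count); and (4) check equality when $k\le\lfloor N/2\rfloor$: in any path of length $k$ starting at $\varnothing$ we have $l(\mu_i)\le i\le k\le\lfloor N/2\rfloor$, so the chamber condition is automatically satisfied and no paths are lost, whence $g_0^k(N)=g_0^k=(k-1)!!$.

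The main obstacle is bookkeeping around the difference between $\mathrm{O}_N$ and $\SO_N$ and making sure the branching rule is invoked in a form valid uniformly in $N$ and $k$ (including the regime $k>N$ where, as in Remark \ref{rem:momentsSpDiaconis}, the Brauer-algebra approach of Diaconis--Shahshahani breaks down because the relevant Brauer algebra is no longer semisimple). One must confirm that the multiplicity of the trivial $\SO_N$-representation in $\Std^{\otimes k}$ is genuinely the unrestricted $\SO_N$-chamber path count $g_0^k(N)$ --- in particular that the splitting of $\mathrm{O}_N$-irreducibles upon restriction to $\SO_N$ does not create extra copies of the trivial representation beyond those already counted (the only $\mathrm{O}_N$-representation restricting to contain the $\SO_N$-trivial is the $\mathrm{O}_N$-trivial itself, so this is fine, but it needs to be stated). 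Once the path-counting identity $g_0^k(N)\le(k-1)!!$ with equality for $k\le\lfloor N/2\rfloor$ is in hand, the proposition follows exactly as in Proposition \ref{prop:momentsSp}.
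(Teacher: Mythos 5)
Your overall strategy coincides with the paper's: reduce $M(k)$ to $\mult_1(\Std^{\otimes k})$ by Peter--Weyl, compute this multiplicity as a count of closed paths in Young's lattice via the combinatorial decompositions of $\Std^{\otimes k}$ (Sundaram's for odd $N$, Proctor's for even $N$), bound that count by the number $(k-1)!!$ of unconstrained oscillating tableaux, and observe that for $k\le\floor{N/2}$ the chamber condition is automatic so equality holds. Your steps (3) and (4), and the entire even-$N$ case, are fine.

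The gap is in the branching rule you assert. For $N=2N'+1$ odd, Sundaram's theorem does \emph{not} say that consecutive partitions differ by exactly one box: it also permits a step with $\mu_{i+1}=\mu_i$ whenever $l(\mu_i)=N'$ (this extra move reflects the zero weight of the standard representation of $\SO_{2N'+1}$). Your parity argument for part (1) collapses as soon as such a step can occur, i.e.\ as soon as $k\ge 2N'+1=N$: the path $\varnothing\to(1)\to\cdots\to(1^{N'})\to(1^{N'})\to\cdots\to(1)\to\varnothing$ is closed and has odd length $2N'+1$. This is not merely a presentational issue --- the statement itself fails in that range. For $\SO_3(\R)$ one has $\E(X^3)=\frac{1}{\pi}\int_0^\pi(1+2\cos\theta)^3(1-\cos\theta)\,d\theta=1$, consistent with $\mult_1(\Std^{\otimes 3})=1$ (since $\Lambda^2\Std\cong\Std$ for $\SO_3$), and similarly $\mult_1(\Std^{\otimes 5})=1$ for $\SO_5$; the symmetry $g\mapsto -g$ that kills odd moments is available only for $N$ even. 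Likewise your bound $M(k)\le(k-1)!!$ for even $k$ is no longer justified for odd $N$, since paths containing stay-steps are not among the $(k-1)!!$ oscillating tableaux. Be aware that the paper's own proof stumbles at exactly the same point: it records Sundaram's rule correctly, stay-step included, but then claims $F_0^k(N')\le f_0^k(N')$, which is false (e.g.\ $F_0^3(1)=1>0=f_0^3(1)$). What both arguments do establish is parts (1) and (2) in the range $k\le\floor{N/2}$ (where no stay-step can occur in a closed path) and for all $k$ when $N$ is even. A further minor inaccuracy: the determinant character of $\mathrm{O}_N$ also restricts to the trivial representation of $\SO_N$, so it is not true that only the $\mathrm{O}_N$-trivial representation does, though this only matters once $k\ge N$.
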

\begin{proof}
  This is similar to the symplectic case. Let $\Std$ be the standard representation of $\SO_{N}(\R)$.
  \begin{enumerate}
  \item (Case $N=2N'+1$ odd). By \cite[Theorem 4.2]{Sundaram2}, we have the decomposition
 \[\Std^{\otimes k}=\bigoplus_{\substack{\mu\\ l(\mu)\le N'}} F_\mu^k(N') S_{[\mu]}(\Std),\]
 where $S_{[\mu]}(\Std)$ is the irreducible representation of $\SO_{2N'+1}(\R)$ associated to the partition $\mu$ (obtained from the Weyl module, see \cite[19.5]{FulHar91}) and $F_\mu^k(N')$ is the number of sequences of partitions $(\varnothing=\mu_0,\dots,\mu_k=\mu)$ such that
 \begin{enumerate}[(a)]
 \item two consecutive partitions either differ by exactly one box in their Young diagrams, or are equal of length $N'$, and
 \item $l(\mu_i)\le N'$ for all $i$.
 \end{enumerate}
Hence, $M(k)=F_0^k(N')$. Clearly, $F_0^k(N')\le f_0^k(N')\le f_0^{k}$ with equality if $k\le N'$, where $f_0^k(N')$ and $f_0^k$ are as in the proof of Proposition \ref{prop:momentsSp}. The result follows then from the latter.
  \item (Case $N=2N'$ even). By \cite[Corollary 4]{Pro90}, we have for $\SO_{2N'}(\R)$ the decomposition
    \[\Std^{\otimes k}=\bigoplus_{\substack{\mu\\ l(\mu)\le N'}} G_\mu^k(N') S_{[\mu]}(\Std),\]
    where $G_\mu^k(N')$ is the number of sequences of partitions $(\varnothing=\mu_0,\dots,\mu_k=\mu)$ such that:
    \begin{enumerate}
    \item two consecutive partitions differ by exactly one box in their Young diagrams, and
    \item for every $0\le i\le k$, the sum of the length of the first two columns in the Young diagram of $\mu_i$ is $\le N'$.
    \end{enumerate}
    Thus, we have again $G_\mu^k(N')\le f_0^k(N')\le f_0^k$ with equality if $k\le N'$, since the Young diagram of $\mu_i$ contains at most $i\le k$ boxes.
  \end{enumerate}
\end{proof}
\begin{remark}
  As for the symplectic case (see Remark \ref{rem:momentsSpDiaconis}), this is proved when $k\le N$ in \cite[Theorem 4]{DiaconisShahshahani}, by using \cite[Theorem 4.4 (b), Corollary 4.5 (b)]{RamCharBrauer}, but again this method cannot be applied when $k>N$.

  The idea of Sundaram in \cite{Sundaram} and \cite{Sundaram2} is to define tableaux generalizing the Robinson-Schensted-Knuth correspondence and to prove a generalized insertion scheme. The symplectic case actually goes back to Berele, and the odd-dimensional orthogonal case is an extension of the latter. For orthogonal groups, there are also generalized tableaux by King-Welsh, Koike-Terada and Fulmek-Krattenhalter, but these do not have at first an easy combinatorial description.
\end{remark}

\section{Examples: coherent families}\label{sec:examples}

In this final section, we give examples of coherent families arising from the examples of Section \ref{subsubsec:examplesTF}, so that Theorems \ref{thm:gaussian1} and \ref{thm:gaussian2} apply.

The construction of the sheaves and the computation of their monodromy groups come from Katz's works \cite{KatzGKM,KatzSarnak91}. It usually remains to argue that the conductor is bounded independently from $q$, show the independence of shifts and to show that the arithmetic and geometric monodromy groups coincide (eventually up to twisting, see Section \ref{subsec:examplesTwisting}). We start by two technical sections with tools to do so, before treating the examples successively.

\subsection{Independence of shifts}
Showing that a geometric isomorphism of the form \eqref{eq:translatinIsom} does not exist can usually be done by looking at the ramification on both sides.

\begin{lemma}\label{lemma:translationInvariance}
  Let $\Fc$ be a nontrivial $\ell$-adic sheaf over $\F_q$ and let $a\in\G_m(\F_q)$ such that there exists a geometric isomorphism of the form \eqref{eq:translatinIsom}. Then
  \begin{enumerate}
  \item \label{item:translationInvariance:SingL} $\Sing(\Fc)\Delta(\Sing(\Fc)-a)\subset\Sing(\Lc)\subset\Sing(\Fc)\cup(\Sing(\Fc)-a)$, where $\Delta$ denotes the symmetric difference.
  \item \label{item:translationInvariance:Invariants} If $\Sing(\Fc)\cap\A^1(\F_q)\neq\varnothing, \ \A^1(\F_q)$, there exists $x\in\Sing(\Fc)\cap\A^1(\F_q)$ such that $\Fc^{I_x}=0$.
  \item \label{item:translationInvariance:LGeomTrivial} If $\Sing(\Fc)\neq\varnothing,\{\infty\}$, the sheaf $\Lc$ is not geometrically trivial.
  \item \label{item:translationInvariance:EP} If $\Lc$ is not geometrically trivial,
    \begin{equation}
      \label{eq:EP}
      |\Sing(\Lc)|+\sum_{x\in\Sing(\Lc)} \Swan_x(\Lc)\ge 2.
    \end{equation}
  \item If $\Fc$ has unique break $t\in\R_{\ge 0}$ at $x\in\P^1(\F_q)$, then the break decomposition of $\Fc\otimes\Lc$ at $x$ is
    \begin{equation}
    \label{eq:breakDecomposition}
    \Fc\otimes\Lc=
  \begin{cases}
    (\Fc\otimes\Lc)(\Swan_\infty(\Lc))&: t<\Swan_\infty(\Lc)\\
    (\Fc\otimes\Lc)(t)&: t>\Swan_\infty(\Lc)\\
    \sum_{z\le t} (\Fc\otimes\Lc)(z)&: t=\Swan_\infty(\Lc).
  \end{cases}
  \end{equation}
\item \label{item:translationInvariance:UniqueBreak} If $\Fc$ has unique break $t\in\R_{\ge 0}$ at $\infty$, then $\Swan_\infty(\Lc)\le t$. If $t$ is not an integer, then $\Swan_\infty(\Lc)\le\floor{t}$.
  \end{enumerate}
\end{lemma}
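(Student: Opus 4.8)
The plan is to treat the clauses essentially independently, using only the behaviour of ramification invariants (sets of singularities, Swan conductors, break decompositions) under the three operations involved: pullback by a translation $[+a]$, tensoring by a rank-one sheaf $\Lc$, and dualization $D(\cdot)$. Throughout, we fix a geometric isomorphism of the form \eqref{eq:translatinIsom}, so geometrically we have $[+a]^*\Fc\cong\Fc'\otimes\Lc$ where $\Fc'$ is either $\Fc$ or $D(\Fc)$; in either case $\Fc'$ has the same singularities and the same Swan conductors as $\Fc$ (translation shifts $\Sing$ by $-a$; dualization preserves all local ramification data). For \ref{item:translationInvariance:SingL}: a rank-one sheaf $\Lc$ is lisse exactly outside $\Sing(\Lc)$, and $\Fc'\otimes\Lc$ is lisse at a point iff both factors are (for the upper bound) while it can only become singular where one factor is (for the two-sided containment); comparing $\Sing([+a]^*\Fc)=\Sing(\Fc)-a$ with $\Sing(\Fc')\cup\Sing(\Lc)=\Sing(\Fc)\cup\Sing(\Lc)$ and using that away from $\Sing(\Lc)$ the two sides of the isomorphism have the same singular locus gives the symmetric-difference lower bound. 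For \ref{item:translationInvariance:Invariants}: since a rank-one sheaf has $\Lc^{I_x}$ either $0$ or all of $\Lc$ at each $x$, tensoring by $\Lc$ either kills $\Fc^{I_x}$ entirely or leaves its dimension unchanged; choosing $x\in\Sing(\Fc)\cap\A^1(\F_q)$ whose translate $x+a$ lies \emph{outside} $\Sing(\Fc)$ (possible precisely because $\Sing(\Fc)\cap\A^1(\F_q)$ is a proper nonempty subset of $\A^1(\F_q)$, so it is not invariant under $[+a]$ — here one has to be slightly careful and may need to iterate $x\mapsto x-a$ a bounded number of times to land outside, but finiteness of $\Sing(\Fc)$ makes this work), one forces $\Lc$ to be wildly or tamely ramified at $x$ in a way that kills the invariants on the left, giving $\Fc^{I_{x}}=0$ at the corresponding point.

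For \ref{item:translationInvariance:LGeomTrivial}: if $\Lc$ were geometrically trivial then $[+a]^*\Fc\cong\Fc'$ geometrically, so $\Sing(\Fc)-a=\Sing(\Fc)$, i.e. $\Sing(\Fc)$ is invariant under the translation $[+a]$; but a finite nonempty subset of $\P^1$ stable under $x\mapsto x+a$ with $a\neq 0$ must be contained in $\{\infty\}$ (any finite point would generate an infinite orbit in characteristic $0$, and in characteristic $p$ the orbit has size $p$, still forcing either emptiness or a set one cannot have with the stated hypothesis unless it reduces to $\{\infty\}$) — contradicting $\Sing(\Fc)\neq\varnothing,\{\infty\}$. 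For \ref{item:translationInvariance:EP}: this is the Euler–Poincaré / Grothendieck–Ogg–Shafarevich lower bound for a nontrivial lisse rank-one sheaf on an open subset of $\P^1$: a geometrically nontrivial $\Lc$ of rank $1$ cannot extend to a lisse sheaf on all of $\P^1$ (which has trivial $\pi_1^{\mathrm{geom}}$), and the degree-of-conductor inequality coming from the vanishing of $H^0$ and $H^2$ of $j_!\Lc$ together with $\chi(\P^1,j_!\Lc)\le 0$ yields $|\Sing(\Lc)|+\sum_x\Swan_x(\Lc)\ge 2$.

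For \ref{eq:breakDecomposition} and \ref{item:translationInvariance:UniqueBreak}: these are the standard rules for break decompositions of a tensor product, where one factor ($\Fc$) is isoclinic of slope $t$ and the other ($\Lc$) has a single break equal to $\Swan_\infty(\Lc)$ (a rank-one sheaf has a unique break at each point). The ultrametric property of breaks gives: if the two slopes differ, $\Fc\otimes\Lc$ is again isoclinic with slope $\max(t,\Swan_\infty(\Lc))$; if they are equal, breaks can only drop, giving the $\sum_{z\le t}$ case. For \ref{item:translationInvariance:UniqueBreak}, apply this at $\infty$: $[+a]^*\Fc$ has the same unique break $t$ at $\infty$ as $\Fc$ (translation fixes $\infty$), while the right side $\Fc'\otimes\Lc$ has breaks controlled by \eqref{eq:breakDecomposition}; matching the unique break $t$ on the left forces $\Swan_\infty(\Lc)\le t$, and when $t\notin\Z$ the tensor-product rule in the equal-slope case would produce a non-integral contribution that is incompatible unless we are in the strict-inequality regime with $\Swan_\infty(\Lc)$ an integer $\le\lfloor t\rfloor$ (using that Swan conductors are always integers).

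The main obstacle I expect is clause \ref{item:translationInvariance:Invariants}: the bookkeeping needed to guarantee the existence of a singularity $x$ with $x+a\notin\Sing(\Fc)$ and to translate "$\Lc$ ramified at $x$" into "$\Fc^{I_x}=0$" requires care about whether $\Lc$ could be tamely but nontrivially ramified at $x$ while $\Fc$ has a tame $I_x$-invariant — one must use precisely that $\Lc$ has rank $1$, so its restriction to $I_x$ is a character, and $\Fc\otimes\Lc$ has invariants at $x$ iff $\Lc|_{I_x}$ appears in $\Fc|_{I_x}^\vee$; choosing $x$ so that the \emph{left} side $[+a]^*\Fc$ is lisse at $x$ (i.e. $x+a\notin\Sing\Fc$) forces $\Fc\otimes\Lc$ to have \emph{full} rank of invariants at $x$, which combined with $x\in\Sing(\Fc)$ (so $\Fc^{I_x}\subsetneq\Fc$) pins down the character $\Lc|_{I_x}$ and ultimately yields the claim. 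The other clauses are routine given the ramification formalism recalled in \cite{KatzGKM}.
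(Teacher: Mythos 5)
Your argument is correct and matches the paper's proof in all essentials: part \ref{item:translationInvariance:SingL} by comparing lissity loci on both sides of \eqref{eq:translatinIsom}, part \ref{item:translationInvariance:EP} by the Grothendieck--Ogg--Shafarevich formula for a geometrically nontrivial rank-one sheaf, and \eqref{eq:breakDecomposition} together with part \ref{item:translationInvariance:UniqueBreak} by Katz's break rules for tensor products plus integrality of the Swan conductor. The only (harmless) deviation is in part \ref{item:translationInvariance:Invariants}, where you deduce $\Fc^{I_x}=0$ at a singularity $x$ with $x+a\notin\Sing(\Fc)$ from the lissity of $[+a]^*\Fc$ at $x$ (pinning down $\Lc|_{I_x}$ as a nontrivial character), whereas the paper works at the translated point in $\Sing(\Lc)\setminus\Sing(\Fc)$ where $(\Fc\otimes\Lc)^{I_x}=\Fc\otimes\Lc^{I_x}=0$ directly; note that both your iteration and the paper's tacitly assume that $\Sing(\Fc)\cap\A^1$ contains no full coset of $\F_p a$ (the orbit of $x$ under $x\mapsto x-a$ has size $p$, so ``finiteness of $\Sing(\Fc)$'' alone does not make the iteration exit), and this same point is the weak spot in your justification of part \ref{item:translationInvariance:LGeomTrivial} --- though the paper's one-line proof of that part rests on the identical implicit assumption.
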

\begin{proof}
  \begin{enumerate}
  \item This is clear.
  \item If $x\in\Sing(\Lc)-\Sing(\Fc)$, then
    \[\Fc^{I_{x+a}}\cong([+a]^*\Fc)^{I_x}\cong(\Fc\otimes\Lc)^{I_x}=\Fc\otimes\Lc^{I_x}=0.\]
    In particular, by \ref{item:translationInvariance:SingL}, if $y\in\Sing(\Fc)$ but $y-a\not\in\Sing(\Fc)$, then $\Fc^{I_y}=0$. If $x\in\Sing(\Fc)\cap\A^1(\F_q)$ and $\A^1(\F_q)\not\subset\Sing(\Fc)$, there exists an integer $m\ge 1$ such that $y=x-(m-1)a\in\Sing(\Fc)$ but $x-ma\not\in\Sing(\Fc)$, whence the conclusion.
  \item By \ref{item:translationInvariance:SingL}, $\Lc$ is not lisse under the assumptions.
  \item The Euler-Poincaré formula of Grothendieck-Ogg-Safarevich gives that the left-hand side of \eqref{eq:EP} is equal to
    \[2+\dim H^1_c(U_\Lc\times\overline\F_q,\Lc)\ge 2\]
    if $\Lc$ is nontrivial.
  \item This follows from \cite[Lemma 1.3]{KatzGKM}.
  \item By \eqref{eq:breakDecomposition}, we have
 \[\Swan_\infty(\Fc\otimes\Lc)=\begin{cases}
    \rank(\Fc)\Swan_\infty(\Lc)&: t<\Swan_\infty(\Lc)\\
    \rank(\Fc)t&: t>\Swan_\infty(\Lc).
  \end{cases}\]
  On the other hand, by \eqref{eq:translatinIsom},
  \[\Swan_\infty(\Fc\otimes\Lc)=\Swan_\infty([+a]^*\Fc)=\Swan_\infty(\Fc)=t\rank(\Fc),\]
  which implies that the case $t<\Swan_\infty(\Lc)$ cannot hold. The last statement follows from the fact that the Swan conductor is an integer.
  \end{enumerate}
\end{proof}

The following classification result will also be useful:
\begin{lemma}\label{lemma:FKMClass}
  Let $\Fc$ be a geometrically irreducible $\ell$-adic sheaf over $\F_q$.
  \begin{enumerate}
  \item \label{item:FKMClass1} If $\Sing(\Fc)=\varnothing$, then $\Fc$ is geometrically trivial.
  \item \label{item:FKMClass2} If $|\Sing(\Fc)|=1$ and $\Fc$ is tamely ramified, then $\Fc$ is geometrically trivial.
  \item \label{item:FKMClass3} If $\Sing(\Fc)=\{x,y\}$ for $x,y\in\P^1(\F_q)$ distinct and $\Fc$ is tamely ramified, then there exists a multiplicative character $\chi: \F_q^\times\to\C^\times$ and a geometric isomorphism
    \[\Fc\cong\Lc_{\chi((X-y)/(X-y))}.\]
  \item \label{item:FKMClass4} If $\Sing(\Fc)=\{x\}$ and $\Swan_x(\Fc)\le 1$, there exists an additive character $\psi: \F_q\to\C^\times$ and a geometric isomorphism
    \[\Fc\cong
      \begin{cases}
        \Lc_{\psi}&:x=\infty\\
        \Lc_{\psi(1/(X-x))}&:x\neq\infty.
      \end{cases}
\]
  \end{enumerate}
\end{lemma}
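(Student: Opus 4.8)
The plan is to reduce each case, by applying an automorphism $\gamma$ of $\P^1$ defined over $\F_q$ to move the singularities into standard position, to a statement about a lisse sheaf on $\A^1$ or $\G_m$ over $\overline\F_q$, and then to invoke the structure of the (tame or wild) geometric fundamental groups of these curves together with the Euler--Poincar\'e formula. I would use freely that such a $\gamma$ is an isomorphism of curves, so that $\gamma^*$ preserves geometric irreducibility, the cardinality of $\Sing$, tameness, and all Swan conductors.

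For \ref{item:FKMClass1}, a sheaf with $\Sing(\Fc)=\varnothing$ is lisse on all of $\P^1_{\overline\F_q}$, so its geometric monodromy representation factors through $\pi_1(\P^1_{\overline\F_q})=1$ and $\Fc$ is geometrically trivial. For \ref{item:FKMClass2}, choose $\gamma$ sending the unique singularity to $\infty$; then $\gamma^*\Fc$ is lisse on $\A^1_{\overline\F_q}$ and tame at $\infty$, hence factors through the tame fundamental group $\pi_1^{\mathrm t}(\A^1_{\overline\F_q})=1$, so $\gamma^*\Fc$, and therefore $\Fc$, is geometrically trivial. For \ref{item:FKMClass3}, since $x,y\in\P^1(\F_q)$ one may take $\gamma:t\mapsto (t-x)/(t-y)$, sending $x\mapsto 0$ and $y\mapsto\infty$ (with obvious modifications if $x$ or $y$ equals $\infty$); then $\gamma^*\Fc$ is lisse on $\G_m$ over $\overline\F_q$ and tame at $0$ and $\infty$, so it factors through $\pi_1^{\mathrm t}(\G_{m,\overline\F_q})$, which is the prime-to-$p$ completion of $\Z$ and in particular abelian. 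Geometric irreducibility then forces $\rank(\Fc)=1$, and a rank-$1$ lisse sheaf on $\G_m$ tame at $0$ and $\infty$ carrying an $\F_q$-structure is geometrically a Kummer sheaf $\Lc_\chi$ for a character $\chi:\F_q^\times\to\C^\times$ (the order of $\chi$ divides $q-1$ precisely because the corresponding character of $\pi_{1,q}^\geom$ must be Frobenius-stable); pulling back by $\gamma^{-1}$ gives $\Fc\cong\Lc_{\chi((X-x)/(X-y))}$, correcting the evident misprint in the statement.

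For \ref{item:FKMClass4}, the Swan conductor is a non-negative integer, so $\Swan_x(\Fc)\in\{0,1\}$. If $\Swan_x(\Fc)=0$ we are in case \ref{item:FKMClass2} and $\Fc$ is geometrically trivial, which is the asserted form with $\psi$ trivial. If $\Swan_x(\Fc)=1$, move $x$ to $\infty$; then $\gamma^*\Fc$ is lisse on $\A^1_{\overline\F_q}$ with $\Swan_\infty(\gamma^*\Fc)=1$. The Euler--Poincar\'e formula of Grothendieck--Ogg--Safarevich gives $\chi_c(\A^1_{\overline\F_q},\gamma^*\Fc)=\rank(\Fc)\cdot\chi_c(\A^1_{\overline\F_q})-\Swan_\infty(\gamma^*\Fc)=\rank(\Fc)-1$, while geometric irreducibility and nontriviality give $H^i_c(\A^1_{\overline\F_q},\gamma^*\Fc)=0$ for $i=0,2$, hence $\rank(\Fc)-1=-\dim H^1_c\le 0$ and $\rank(\Fc)=1$. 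By Artin--Schreier theory a rank-$1$ lisse sheaf on $\A^1$ with $\Swan_\infty=1$ is geometrically isomorphic to $\Lc_\psi$ for a nontrivial additive character $\psi$; pulling back by $\gamma^{-1}$ (one may take $\gamma:t\mapsto 1/(t-x)$ when $x\neq\infty$ and $\gamma=\mathrm{id}$ when $x=\infty$) then yields the stated form.

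The step I expect to be the main obstacle is \ref{item:FKMClass4}, where one has to couple the Euler--Poincar\'e bound on the rank with the classification of rank-$1$ wildly ramified sheaves on $\A^1$; the analogous identifications of rank-$1$ tame sheaves on $\G_m$ with Kummer sheaves, and the triviality of $\pi_1^{\mathrm t}(\A^1_{\overline\F_q})$, are the other structural inputs that cannot be reproved here. For all of these I would simply cite \cite{KatzGKM} (which already furnishes \cite[Lemma 1.3]{KatzGKM} used above), so that the remaining content of the proof is the bookkeeping indicated in the first paragraph.
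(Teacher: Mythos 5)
Your argument is correct: the paper itself offers no proof beyond the citation ``See \cite[Proposition 4.4.6]{FKMCours}'', and your reconstruction (triviality of $\pi_1(\P^1_{\overline\F_q})$ and of $\pi_1^{\mathrm{t}}(\A^1_{\overline\F_q})$, commutativity of $\pi_1^{\mathrm{t}}(\G_{m,\overline\F_q})$ plus Frobenius-stability to get a Kummer sheaf, and the Euler--Poincar\'e formula to force rank one in the wild case) is exactly the standard argument underlying that reference. You are also right that the displayed isomorphism in \ref{item:FKMClass3} contains a misprint and should read $\Lc_{\chi((X-x)/(X-y))}$.
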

\begin{proof}
  See \cite[Proposition 4.4.6]{FKMCours}.
\end{proof}

\subsubsection{Arguments with unipotent blocks}

\begin{lemma}\label{lemma:translIsomUnipPrelim}
  Let $\Gc$ an $\ell$-adic sheaf over $\F_q$ such that $\Sing(\Gc)\cap\A^1(\F_q)\neq \varnothing, \ \A^1(\F_q)$. For every $s\in\Sing(\Gc)\cap\A^1(\F_q)$, we consider the tame part of the break decomposition of $\Gc$ at $s$,
  \begin{equation}
    \label{eq:breakDecGsTame}
    \Gc(s)^{\textnormal{tame}}=\bigoplus_\chi \left(\operatorname{Unip.}\otimes\Lc_{\chi(X+s)}\right),
  \end{equation}
  and we assume that either the trivial multiplicative character $\chi=1$ appears, or that at least two distinct characters $\chi_1,\chi_2$ appear. Then there is no isomorphism of the form \eqref{eq:translatinIsom} with $a\neq 0$.
\end{lemma}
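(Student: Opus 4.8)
The plan is to argue by contradiction. Suppose there is a geometric isomorphism of the form \eqref{eq:translatinIsom} with $a\neq 0$; I will contradict the standing hypothesis on the tame parts \eqref{eq:breakDecGsTame}.

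The first step is to produce a boundary singularity. By part \ref{item:translationInvariance:Invariants} of Lemma \ref{lemma:translationInvariance} -- more precisely, by the construction in its proof, which uses part \ref{item:translationInvariance:SingL} -- the assumption $\Sing(\Gc)\cap\A^1(\F_q)\neq\varnothing,\A^1(\F_q)$ produces a point $x\in\Sing(\Gc)\cap\A^1(\F_q)$ that is a boundary point in the direction $-a$: one has $x-a\notin\Sing(\Gc)$ and $\Gc^{I_x}=0$. The crucial consequence of $\Gc^{I_x}=0$ is that the trivial multiplicative character $\chi=1$ does \emph{not} occur in \eqref{eq:breakDecGsTame} at $s=x$: a block $\operatorname{Unip.}\otimes\Lc_{\chi(X+x)}$ has nonzero $I_x$-invariants exactly when $\chi=1$, while the wild part of $\Gc$ at $x$ has none. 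Hence, by the standing hypothesis applied at $s=x$, at least two distinct characters $\chi_1\neq\chi_2$ must occur in $\Gc(x)^{\textnormal{tame}}$.

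Next I would read off the local monodromy of $\Gc$ at $x$ from the isomorphism. Restricting \eqref{eq:translatinIsom} to the inertia group at $x-a$ identifies the local representation of $\Gc$ at $x$ with the tensor product of the local representation of $\Gc$ (resp. of $D(\Gc)$) at $x-a$ and that of $\Lc$ at $x-a$; since $x-a\notin\Sing(\Gc)$, the sheaf $\Gc$, hence also $D(\Gc)$, is lisse at $x-a$, so the first factor is unramified of dimension $\rank(\Gc)$. Thus the local monodromy of $\Gc$ at $x$ is a direct sum of $\rank(\Gc)$ copies of the single rank-one local representation of $\Lc$ at $x-a$. Now a case check on $\Lc$ at $x-a$ finishes the argument: it cannot be lisse there, for then $\Gc$ would be lisse at $x$, contradicting $x\in\Sing(\Gc)$; if it is tamely ramified, its local character $\eta$ is necessarily nontrivial and $\Gc(x)^{\textnormal{tame}}=\operatorname{Unip.}\otimes\Lc_{\eta(X+x)}$ involves only the single character $\eta$; and if it is wildly ramified, then $\Gc$ at $x$ has a single positive break, so its tame part \eqref{eq:breakDecGsTame} is empty. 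In each case at most one character occurs at $x$, contradicting the previous paragraph. The same reasoning applies verbatim to the dual form of \eqref{eq:translatinIsom}, since passing to the dual changes neither lissity at $x-a$ nor the relevant local representation there.

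The step I expect to be the main obstacle is the first one: extracting a single point $x$ that simultaneously satisfies $x-a\notin\Sing(\Gc)$ and $\Gc^{I_x}=0$ (this is essentially contained in the proof of part \ref{item:translationInvariance:Invariants} of Lemma \ref{lemma:translationInvariance}, but one must carry along the extra clause $x-a\notin\Sing(\Gc)$), together with the elementary but slightly delicate local representation theory pinning down precisely when the trivial character contributes $I_x$-invariants. Once $x$ is in hand, the remaining steps are a short formal verification.
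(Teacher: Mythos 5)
Your proof is correct, but it takes a genuinely different route from the paper's. The paper argues by reduction: if the trivial character is missing from the tame part at some affine singularity $s$, it twists $\Gc$ by $\Lc_{\overline\chi_1(X+s)}$ (possible exactly because, in that case, at least two distinct characters occur, so the hypothesis survives the twist), repeats until a unipotent block sits at every affine singularity, and then invokes Lemma \ref{lemma:translationInvariance} \ref{item:translationInvariance:Invariants} as a black box: the point $s$ it produces with $\Gc^{I_s}=0$ is incompatible with the unipotent block there. You instead open up the proof of Lemma \ref{lemma:translationInvariance} \ref{item:translationInvariance:Invariants} to extract the stronger output that the bad point $x$ may be taken with $x-a\notin\Sing(\Gc)$, and then compute the full local monodromy of $\Gc$ at $x$ (not merely its inertia invariants) by restricting the isomorphism to $I_{x-a}$: it is $\rank(\Gc)$ copies of the rank-one local representation of $\Lc$, hence isotypic, so at most one tame character can occur at $x$ --- which kills both branches of the hypothesis at once and makes the twisting step unnecessary. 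What your version buys is a single self-contained local computation in place of a recursive reduction plus a cited lemma; what it costs is reliance on the internals of the other lemma's proof (the chain argument producing a boundary singularity, on which both proofs ultimately rest) rather than on its statement. The local representation theory you spell out --- a unipotent-tensor-character block has nonzero inertia invariants iff the character is trivial, and the wild part has none --- is precisely what the paper leaves implicit in its final ``a contradiction''.
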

\begin{proof}
  Let us assume that there is an isomorphism of the form \eqref{eq:translatinIsom} for $\Gc$ with $a\neq 0$.
  If the break decomposition of $\Gc$ at some $s\in\Sing(\Gc)\cap\A^1(\F_q)$ does not contain a summand $\operatorname{Unip.}\otimes\Lc_{\chi(X+s)}$ with $\chi$ trivial, we replace $\Gc$ by $\Gc\otimes\Lc_{\overline\chi_1(X+s)}$, where $\chi_1$ is a character appearing in \eqref{eq:breakDecGsTame}. This new sheaf still satisfies the same hypotheses as $\Gc$, with the same $a$ in \eqref{eq:translatinIsom} (but with a different $\Lc$), and with a unipotent block in the break decomposition at $s$.

Recursively, we can hence assume that the tame part of $\Gc$ at any $s\in\Sing(\Gc)\cap\A^1(\F_q)$ contains a unipotent block.

By Lemma \ref{lemma:translationInvariance} \ref{item:translationInvariance:Invariants}, there exists $s\in\Sing(\Gc)\cap\A^1(\F_q)$ such that $\Gc^{I_s}=0$, a contradiction.
\end{proof}
\begin{lemma}\label{lemma:translIsomUnip}
  Let $\psi: \F_q\to\C^\times$ be a nontrivial additive character and let $\Gc=\FT_\psi(\Fc)$ be the $\ell$-adic Fourier transform of a Fourier sheaf $\Fc$ over $\F_q$, with $\rank(\Fc)<q-1$. For all $s\in\A^1(\F_q)$, we consider the break decomposition of $\Fc^{(s)}=\Fc\otimes\Lc_{\psi(sX)}$ at $\infty$:
  \begin{eqnarray}
    \Fc^{(s)}&=&\bigoplus_{t\in\R_{\ge 0}}\Fc^{(s)}(t)=\Fc^{(s),\textnormal{tame}}\bigoplus\Fc^{(s),\textnormal{wild}}\nonumber\\
    &=&\left(\bigoplus_\chi \left(\operatorname{Unip}(\chi,s)\otimes\Lc_{\chi(X+s)}\right)\right)\bigoplus \left(\bigoplus_{t>0}\Fc^{(s)}(t)\right).\label{eq:breakDecFcs}
  \end{eqnarray}
  We assume that:
\begin{itemize}
\item The decomposition \eqref{eq:breakDecFcs} at $s=0$ contains at least one break $t\in[0,1]$.
\item For all $s\in\A^1(\F_q)$ such that the decomposition \eqref{eq:breakDecFcs} contains a break $t\in[0,1)$, either the trivial multiplicative character appears in the tame part, or the latter contains at least two distinct characters.
\end{itemize}
Then there is no isomorphism of the form \eqref{eq:translatinIsom} for $\Gc$ with $a\neq 0$.
\end{lemma}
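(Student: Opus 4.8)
The plan is to reduce the statement about $\Gc=\FT_\psi(\Fc)$ to an application of Lemma~\ref{lemma:translIsomUnipPrelim}, using the dictionary provided by the $\ell$-adic Fourier transform between the local behaviour of $\Fc^{(s)}=\Fc\otimes\Lc_{\psi(sX)}$ at $\infty$ and the local behaviour of $\Gc$ at the point $s\in\A^1(\F_q)$. Concretely, the local monodromy of $\FT_\psi(\Fc)$ at a finite point $s$ is governed, via Laumon's local Fourier transform (in particular the stationary phase principle), by the breaks of $\Fc^{(s)}$ at $\infty$ that are $\le 1$: the breaks in $[0,1)$ contribute to the part of $\Gc$ that is singular at $s$, and their tame constituents $\operatorname{Unip}(\chi,s)\otimes\Lc_{\chi(X+s)}$ of $\Fc^{(s),\mathrm{tame}}$ match (up to the relevant local Fourier transform $\mathrm{FT}_{loc}(0,\infty)$ and $\mathrm{FT}_{loc}(\infty,\infty)$) the tame part of the break decomposition of $\Gc$ at $s$ in the shape \eqref{eq:breakDecGsTame}.

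First I would establish that $\Sing(\Gc)\cap\A^1(\F_q)\neq\varnothing,\ \A^1(\F_q)$, which is exactly the hypothesis needed to invoke Lemma~\ref{lemma:translIsomUnipPrelim}. Nonemptiness follows from the first assumption: the decomposition \eqref{eq:breakDecFcs} at $s=0$ contains a break $t\in[0,1]$, so $\Gc$ is ramified at $0$ (a break $\le 1$ of $\Fc^{(s)}$ at $\infty$ forces a singularity of $\FT_\psi(\Fc)$ at $s$); that $\Sing(\Gc)\cap\A^1(\F_q)$ is not all of $\A^1(\F_q)$ follows from the rank bound $\rank(\Fc)<q-1$ together with the fact that $\Gc$ is itself a Fourier sheaf of controlled rank and conductor, so it can only have boundedly many finite singularities — in particular fewer than $q$ when $q$ is large, and this is the point at which the rank hypothesis enters. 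Then I would translate the second assumption: for each $s$ with a break of $\Fc^{(s)}$ in $[0,1)$ at $\infty$ — which is precisely the set of finite singularities of $\Gc$ — the hypothesis says that in the tame part either the trivial multiplicative character occurs or two distinct characters occur, and under the local Fourier transform dictionary this is exactly the statement that the tame part $\Gc(s)^{\mathrm{tame}}=\bigoplus_\chi(\operatorname{Unip.}\otimes\Lc_{\chi(X+s)})$ either contains the trivial character or contains at least two distinct characters. (Here one uses that the local Fourier transforms $\mathrm{FT}_{loc}(0,\infty)$ and $\mathrm{FT}_{loc}(\infty,\infty)$ preserve tameness appropriately and act on the Kummer-twist labels by a transparent rule, with the trivial character being sent to the trivial character.)

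Having verified the hypotheses of Lemma~\ref{lemma:translIsomUnipPrelim} for $\Gc$, I would conclude directly that there is no geometric isomorphism of the form \eqref{eq:translatinIsom} for $\Gc$ with $a\neq 0$. The main obstacle, and the part requiring care rather than routine calculation, is the precise bookkeeping of Laumon's local Fourier transform: one must be sure that the passage from ``breaks of $\Fc^{(s)}$ at $\infty$ lying in $[0,1)$'' to ``local structure of $\Gc$ at $s$'' is exactly an equivalence of the relevant pieces of local monodromy representations, and that the induced correspondence on tame Kummer labels is compatible enough that the two qualitative conditions (``$1$ appears'' and ``$\ge 2$ distinct $\chi$'s appear'') transfer verbatim. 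Everything else — establishing that $\Gc$ is ramified somewhere finite, that it is not ramified everywhere on $\A^1$, and the final invocation of the preceding lemma — is either immediate from the hypotheses or a direct consequence of the conductor/rank bounds on Fourier sheaves.
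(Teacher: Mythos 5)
Your proposal is correct and follows essentially the same route as the paper: verify the hypotheses of Lemma \ref{lemma:translIsomUnipPrelim} by using the local theory of the $\ell$-adic Fourier transform (Katz \cite[Corollary 8.5.8]{KatzGKM} and \cite[7.4.4(3)]{KatzESDE}) to identify $\Sing(\Gc)\cap\A^1(\F_q)$ with the set of $s$ for which $\Fc^{(s)}$ has a break in $[0,1)$ at $\infty$, and to match the tame parts (the characters get dualized, $\chi\mapsto\overline\chi$, which preserves both qualitative conditions), the rank bound $\rank(\Fc)<q-1$ ensuring the finite singularities do not exhaust $\A^1(\F_q)$. The only place you are looser than the paper is the appeal to ``$q$ large'' for that last point, where the precise input is that at most $\rank(\Fc)$ values of $s$ can produce a break $<1$; this is exactly what the cited corollary provides.
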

\begin{proof}
  By \cite[Corollary 8.5.8]{KatzGKM} (see also \cite[Corollary 7.4.5]{KatzESDE}), the first assumption and the condition on the rank of $\Fc$ imply that $\Sing(\Gc)\cap\A^1(\F_q)\neq \varnothing, \ \A^1(\F_q)$. Moreover, $s\in\Sing(\Gc)\cap\A^1(\F_q)$ if and only if the decomposition \eqref{eq:breakDecFcs} contains a break $t\in[0,1)$. By \cite[7.4.4(3)]{KatzESDE}, the tame part of the break decomposition of $\Gc$ at $s$ is in this case
\[\bigoplus_\chi \left(\operatorname{Unip}(\chi,s)\otimes\Lc_{\overline\chi(X+s)}\right)\]
(with the same unipotent blocks). It suffices to apply Lemma \ref{lemma:translIsomUnipPrelim} to conclude.
\end{proof}
\subsection{Equality of arithmetic and geometric monodromy groups}\label{subsec:arithmGeomMono}
  In \cite{KatzESDE}, often only the geometric monodromy group $G_\geom=G_\geom(\Fc)$ of an $\ell$-adic sheaf $\Fc$ over $\F_q$, or its connected component
\[G^0_\geom\le G_\geom\le G_\arith=G_\arith(\Fc),\]
are directly given. As is explained in \cite[7.11--7.14]{KatzESDE} and \cite{MichelMinorationsSommesExp}, it is usually possible to get
\[G_\geom^0=G_\geom=G_\arith,\]
up to twisting $\Fc$ by a rank $1$ sheaf, or even, ideally, a constant:

\begin{itemize}[itemsep=0.2cm]
\item (Symplectic case) This is the simplest case. Proving that $G_\geom^{0}=\Sp_n(\C)$ with the techniques in \cite[Chapter 7]{KatzESDE} actually shows that the sheaf is itself symplectically self-dual (see \cite[7.13, p. 244]{KatzESDE}), as for Kloosterman sheaves (see \cite[4.1.11]{KatzGKM}). Hence $G_\arith\subset\Sp_n(\C)$ and thus $G_\geom=G_\arith=\Sp_n(\C)$. 
\item (Special orthogonal case) Similarly, proving that $G_\geom^0=\SO_n(\C)$ (or $\O_n(\C)$) with the techniques of \cite[Chapter 7]{KatzESDE} actually shows that $G_\arith\subset \O_n(\C)$ (see \cite[7.14, $\O$-Example(2)]{KatzESDE}). Hence, there exists $\alpha\in\{\pm 1\}$ such that $\Fc'=\alpha^{1/n}\otimes \Fc$ has $G_\geom(\Fc')=G_\arith(\Fc')=\SO_n(\C)$.
\item (Special linear case) This is the hardest case. Assume that $G_\geom^0=G_\geom^{0,\der}=\SL_n(\C)$. We can determine the geometric determinant $\det(\Fc)$ and twist it by a rank one sheaf $\Lc$ to make it geometrically trivial, hence arithmetically isomorphic to $\alpha\otimes\overline\Q_\ell$, for a Weil number $\alpha$ of weight $0$ (which may be difficult to determine explicitly). If we let $\Fc'=\alpha^{-1/n}\otimes \Lc\otimes\Fc$, we have $G_\arith(\Fc')\subset\SL_n(\C)$ and $\SL_n(\C)=G^{0}_\geom\subset G^{0}_\geom(\Fc')$ since $G_\geom^{0}$ is equal to its derived subgroup and $\Lc$ has rank one. This gives
\[G_\geom(\Fc')=G_\arith(\Fc')=\SL_n(\C).\]
Moreover, it happens in some cases that $\Lc$ is arithmetically constant, so that $\Fc'=\alpha^{-1/n}\otimes\Fc$ is simply a renormalization of $\Fc$.
\end{itemize}

\subsection{Kummer sheaves: multiplicative characters}
\begin{proposition}
  A family $(\Fc)_q$ of Kummer sheaves $\Lc_{\chi(f)}$, where $\deg(f)$ is bounded independently from $q$ and $f$ has no zero or pole of order divisible by $\ord(\chi)$, is coherent.
\end{proposition}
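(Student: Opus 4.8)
The plan is to verify the three defining conditions of a coherent family (Definition \ref{def:coherentFamily}) for the family $(\Lc_{\chi_q(f_q)})_q$, specifically the conditions applicable in the \emph{Kummer case}: uniform boundedness of the conductor, the fact that each sheaf is genuinely a Kummer sheaf $\Lc_{\chi_q\circ f_q}$ (which is true by hypothesis), and the uniformity of the characters being all real-valued or all complex-valued. I would also, along the way, record that each $\Lc_{\chi(f)}$ is pointwise pure of weight $0$ and geometrically irreducible, since these are part of the definition of a coherent family.

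First I would recall the standard structure theory of Kummer sheaves (following \cite[Section 4.1]{KatzGKM} or \cite{FKMCours}): for a multiplicative character $\chi:\F_q^\times\to\C^\times$ of order $d>1$ and $f\in\F_q(X)$ with no zero or pole of order divisible by $d=\ord(\chi)$, the pullback $\Lc_{\chi(f)}=f^*\Lc_\chi$ is a lisse rank-$1$ sheaf on the complement of the zeros and poles of $f$ (together with $\infty$), tamely ramified everywhere, pointwise pure of weight $0$. A rank-$1$ sheaf is automatically geometrically irreducible. For the conductor: $\rank(\Lc_{\chi(f)})=1$; the singularities are contained in $\{\text{zeros of }f\}\cup\{\text{poles of }f\}\cup\{\infty\}$, and since $\chi$ has order not dividing the order of any zero or pole, the sheaf is genuinely ramified only at those points, so $|\Sing(\Lc_{\chi(f)})|\le \deg(f_1)+\deg(f_2)+1$ where $f=f_1/f_2$ in lowest terms, hence bounded by $O(\deg f)$; and because $\Lc_\chi$ is tame and pullback by a rational map of bounded degree preserves tameness up to bounded Swan conductor — in fact $\Lc_{\chi(f)}$ is everywhere tame so all Swan conductors vanish. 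Therefore $\cond(\Lc_{\chi(f)})=1+|\Sing|+\sum\Swan_x \le 1 + (\deg f_1 + \deg f_2 + 1) + 0$, which is bounded independently of $q$ since $\deg f$ is. This handles condition (1).

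The remaining point is condition (2) of the Kummer case: that we may arrange the characters $\chi_q$ to be all real-valued or all complex-valued. Here I would simply note that this is a hypothesis one imposes on the family (one passes to a subfamily if necessary, or it is part of the data of which family one considers), exactly as in Definition \ref{def:coherentFamily}; the point of the proposition is that the geometric conditions (conductor, irreducibility, purity) are automatic under the stated degree and order hypotheses. So the proof essentially reduces to the conductor bound together with citing the structure theory of Kummer sheaves.

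I do not expect any genuine obstacle here — this is a verification rather than a theorem with a hard core. The only mild subtlety is making sure the order condition on $f$ (no zero or pole of order divisible by $\ord(\chi)$) is exactly what guarantees $\Lc_{\chi(f)}$ is ramified at those points (so it does not extend to a lisse sheaf across them, which would change the singular set) and simultaneously that it is nowhere wildly ramified; both follow from the tensor/pullback formalism for $\Lc_\chi$ and the fact that $\Lc_\chi$ on $\G_m$ is tame with local monodromy a character of the tame quotient. One should also remark that the $I_q$-compatibility condition is a separate hypothesis that enters only in Theorems \ref{thm:gaussian1} and \ref{thm:gaussian2}, not in coherence itself, so it need not be addressed in this proposition.
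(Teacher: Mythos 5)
Your proposal is correct and follows essentially the same route as the paper, which simply cites the construction of the Kummer sheaf and records that $\cond(\Lc_{\chi(f)})=1+\deg(f_1)+\deg(f_2)$ for $f=f_1/f_2$ in lowest terms, hence bounded independently of $q$. You spell out the supporting facts (rank one so geometrically irreducible, tame everywhere so all Swan conductors vanish, purity of weight $0$, and the real/complex dichotomy being part of the data of the family) in more detail than the paper does, but there is no difference in substance.
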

\begin{proof}
  For the construction of the Kummer sheaf, see \cite[Exposé 6, Section 1]{DelEC} or \cite[Section 4.3]{KatzGKM}. We have $\cond(\Lc_{\chi(f)})=1+\deg(f_1)+\deg(f_2)$ where $f=f_1/f_2$ with $f_1,f_2\in\F_q[X]$, $(f_1,f_2)=1$.
\end{proof}
\subsection{Kloosterman sheaves}

\begin{theorem}[Deligne, Katz]
  For $n\ge 2$ an integer, there exists a \emph{Kloosterman $\ell$-adic sheaf} $\Klc_n$ over $\F_q$, of rank $n$, with trace function equal to the Kloosterman sum \eqref{eq:KS}. The family $(\Klc_{n,q})_{q\text{ odd}}$ is coherent, with monodromy group equal to
\[
\begin{cases}
  \SL_n(\C)&: n\text{ odd}\\
  \Sp_n(\C)&: n\text{ even}.
\end{cases}
\]
\end{theorem}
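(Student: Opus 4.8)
The plan is to quote from Katz's book \cite{KatzGKM} both the construction of $\Klc_n$ and the determination of its geometric monodromy group, and then to check by hand the two remaining requirements of the classical case of Definition~\ref{def:coherentFamily}: that the arithmetic and geometric monodromy groups coincide, and that there is no shift-isomorphism \eqref{eq:translatinIsom}.

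First I would recall the sheaf. By \cite[Chapter~4]{KatzGKM}, for $n\ge 2$ there is an $\ell$-adic sheaf $\Klc_n$ over $\F_q$ ($q$ odd), lisse of rank $n$ on $\G_m$, with local monodromy at $0$ a single unipotent Jordan block of size $n$ (in particular tame at $0$), and totally wild at $\infty$ with $\Swan_\infty(\Klc_n)=1$ (unique break $1/n$); it is pointwise pure of weight $0$, geometrically irreducible, and by the Grothendieck--Lefschetz trace formula its trace function is the normalized hyper-Kloosterman sum \eqref{eq:KS}. Thus $\Sing(\Klc_n)=\{0,\infty\}$ and
\[\cond(\Klc_n)=n+|\{0,\infty\}|+\Swan_0(\Klc_n)+\Swan_\infty(\Klc_n)=n+3,\]
which is bounded independently of $q$. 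Since $\rank(\Klc_n)=n\ge 2$, $\Klc_n$ is not a Kummer sheaf, hence $I$-compatible for every $I\subset\F_q$; it therefore remains only to place the family in the classical case.

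For the monodromy, \cite[Chapter~11]{KatzGKM} gives, for $q$ odd, $G_\geom(\Klc_n)=\SL_n(\C)$ when $n$ is odd and $G_\geom(\Klc_n)=\Sp_n(\C)$ when $n$ is even; in both cases this group is of the shape allowed in Definition~\ref{def:coherentFamily} and is never $\SO_8(\C)$. To upgrade this to $G_\geom(\Klc_n)=G_\arith(\Klc_n)$: when $n$ is even, $\Klc_n$ is symplectically self-dual already over $\F_q$ (\cite[4.1.11]{KatzGKM}), so $G_\arith(\Klc_n)\subseteq\Sp_n(\C)$ and equality follows; when $n$ is odd, $\det(\Klc_n)$ is geometrically trivial and, with the normalization by $(-1)^{n-1}q^{-(n-1)/2}$, arithmetically trivial (\cite[4.1.6]{KatzGKM}), so $G_\arith(\Klc_n)\subseteq\SL_n(\C)$ and again equality follows (no twist as in Section~\ref{subsec:examplesTwisting} is needed here). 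This bookkeeping --- working over $\F_q$ and not merely over $\overline\F_q$ --- is the most delicate point of the argument, although everything needed is already in \cite{KatzGKM}.

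It remains to verify the independence of shifts, which is the one genuinely new computation. Suppose, for $a\in\G_m(\F_q)$ and $\Lc$ a sheaf of rank $1$, that there is a geometric isomorphism $[+a]^*\Klc_n\cong\Klc_n\otimes\Lc$ or $[+a]^*\Klc_n\cong D(\Klc_n)\otimes\Lc$. A geometric isomorphism preserves the set of singularities, and the left-hand side is lisse outside $\{-a,\infty\}$; since $a\in\F_q^\times$ we have $0\notin\{-a,\infty\}$, so $[+a]^*\Klc_n$ is lisse at $0$. On the other hand, $\Klc_n$ --- and hence its dual $D(\Klc_n)$ --- has at $0$ a nontrivial unipotent $I_0$-representation of dimension $n\ge 2$, and tensoring with the rank-$1$ sheaf $\Lc$ keeps the $I_0$-action nontrivial (a single Jordan block of size $\ge 2$ twisted by a character is never the trivial representation), so $0\in\Sing(\Klc_n\otimes\Lc)\cap\Sing(D(\Klc_n)\otimes\Lc)$. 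This contradicts the lissity of the left-hand side at $0$. (One can also invoke Lemma~\ref{lemma:translationInvariance}~\ref{item:translationInvariance:SingL}, which with $\Sing(\Klc_n)=\{0,\infty\}$ and $\Sing(\Klc_n)-a=\{-a,\infty\}$ already forces $\{0,-a\}\subseteq\Sing(\Lc)$.) Hence no isomorphism \eqref{eq:translatinIsom} exists, the conditions of the classical case of Definition~\ref{def:coherentFamily} all hold, and $(\Klc_{n,q})_{q\text{ odd}}$ is coherent with monodromy group $\SL_n(\C)$ for $n$ odd and $\Sp_n(\C)$ for $n$ even.
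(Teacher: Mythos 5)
Your proposal is correct and follows the same overall template as the paper (quote Katz for the construction, rank, purity, irreducibility and the computation of $G_\geom$; note $\cond(\Klc_n)=n+3$; check $G_\geom=G_\arith$; rule out \eqref{eq:translatinIsom}), but it diverges on the one nontrivial verification, the independence of shifts. The paper disposes of this in one line by invoking Lemma~\ref{lemma:translIsomUnip}, i.e.\ it views $\Klc_n$ as an $\ell$-adic Fourier transform, uses Laumon--Katz local Fourier theory (\cite[7.4.1]{KatzGKM}) to locate the finite singularities of the transform and the unipotent blocks in their tame parts, and then concludes via Lemma~\ref{lemma:translIsomUnipPrelim} and Lemma~\ref{lemma:translationInvariance}~\ref{item:translationInvariance:Invariants}. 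You instead argue directly from the known local structure of $\Klc_n$ itself: $[+a]^*\Klc_n$ is lisse at $0$ for $a\neq 0$, whereas $\Klc_n\otimes\Lc$ (or $D(\Klc_n)\otimes\Lc$) cannot be, because a unipotent Jordan block of size $n\ge 2$ twisted by a rank-one character is never the trivial $I_0$-representation (triviality would force semisimplicity). Both arguments ultimately exploit the same feature --- the unipotent local monodromy at $0$ --- but yours is more elementary and self-contained for this specific family, while the paper's Fourier-transform lemma is the one that scales to hypergeometric sheaves and the general exponential sums of Section~\ref{sec:examples}, where the singularities of the sheaf are not known as explicitly in advance. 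Your treatment of $G_\geom=G_\arith$ (symplectic autoduality over $\F_q$ for $n$ even, arithmetic triviality of the normalized determinant for $n$ odd) is the standard one and consistent with Section~\ref{subsec:arithmGeomMono}; only the exact reference numbers in \cite{KatzGKM} would need checking.
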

\begin{proof}
  For the construction and computation of monodromy groups, see \cite{KatzGKM}. We have $\cond(\Klc_n)=n+3$. Finally, the independence of shifts follows from Lemma \ref{lemma:translIsomUnip}, which can be applied thanks to \cite[7.4.1]{KatzGKM}.
\end{proof}

\subsection{Hypergeometric sheaves}

\begin{proposition}[Katz]\label{prop:hypergeometric}
  Let $n\ge m\ge 0$ be integers with $r=m+n\ge 1$, $\bs\chi_q=(\chi_{i,q})_{1\le i\le n}$, $\bs\rho_q=(\rho_{j,q})_{1\le j\le m}$ tuples of pairwise distinct characters of $\F_q^\times$. There exists a \emph{hypergeometric sheaf} $\Hc(\bs\chi_q,\bs\rho_q)$ over $\F_q$ of rank $n$, with trace function equal to the \textit{hypergeometric sum} $\textnormal{Hyp}(\bs\chi_q,\bs\rho_q): \F_q\to\C$ defined by
\[t\mapsto \frac{(-1)^{r-1}}{q^{(r-1)/2}}\sum_{\substack{\bs x\in\F_q^n, \bs y\in\F_q^m\\N(\bs x)=tN(\bs y)} } \left(\prod_{i=1}^n \chi_{i,q}(x_i) \prod_{j=1}^m \overline{\rho_{j,q}(y_j)}\right) e\left(\frac{\tr(T(\bs x)-T(\bs y))}{p}\right),\]
where $N: \F_q^n\to\F_q$ is the norm (product of components) and $T: \F_q^n\to\F_q$ the trace (sum of components). We assume that $\Lambda_q=\prod_i \chi_{i,q}=1$ and either:
  \begin{enumerate}
  \item $n=m$ is odd and $\Gamma_q=\prod_j \rho_{j,q}=1$ is constant, or
  \item $n-m\ge 3$ is odd.
  \end{enumerate}
  Then the family $(\Hc(\bs\chi_q,\bs\rho_q))_q$ is coherent with monodromy group $\SL_n(\C)$.
\end{proposition}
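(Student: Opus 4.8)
The plan is to verify the three defining conditions of a coherent family in the classical case (Definition \ref{def:coherentFamily}): uniform boundedness of the conductor, coincidence of the arithmetic and geometric monodromy groups with $\SL_n(\C)$, and independence of shifts. The construction of $\Hc(\bs\chi_q,\bs\rho_q)$, the identification of its trace function with $\mathrm{Hyp}(\bs\chi_q,\bs\rho_q)$, its rank $n$, pointwise purity of weight $0$, geometric irreducibility, and the computation $G_\geom=\SL_n(\C)$ under hypotheses (1) or (2) are all due to Katz (see \cite[Chapter 8]{KatzESDE}, in particular \cite[8.4.2, 8.8.1, 8.11.2]{KatzESDE}), so I would cite these directly. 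The ramification is concentrated at $0$, $1$ and $\infty$: the sheaf is tame at $0$ and $1$ with the local monodromy read off from the $\chi_i$ and $\rho_j$, and at $\infty$ it has a single break equal to $1/(n-m)$ (so tame when $n=m$, wild of Swan conductor $1$ when $n-m\ge 1$). Since $|\Sing|\le 3$, $\rank=n$ is bounded, and $\Swan_\infty\le 1$, the conductor $\cond(\Hc)\le n+3+1$ is bounded independently of $q$.

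For the equality of monodromy groups I would follow Section \ref{subsec:arithmGeomMono}, special linear case: under hypothesis $\Lambda_q=\prod_i\chi_{i,q}=1$ Katz shows $\det(\Hc)$ is geometrically trivial and, after twisting by a suitable rank-one sheaf $\Lc$ (arithmetically of the form $\alpha\otimes\overline\Q_\ell$ for a weight-$0$ Weil number $\alpha$), one gets $G_\arith\subset\SL_n(\C)$; combined with $G_\geom^0=G_\geom^{0,\der}=\SL_n(\C)$ this forces $G_\geom=G_\arith=\SL_n(\C)$ for the (possibly renormalized) sheaf. In case (1) one further uses $\Gamma_q=\prod_j\rho_{j,q}=1$ constant to control the determinant more tightly; in case (2) one uses that $n-m\ge 3$ is odd so that the relevant autoduality obstruction vanishes and Katz's result \cite[8.11.2]{KatzESDE} gives $G_\geom=\SL_n$ outright. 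This is essentially a bookkeeping step given Katz's theorems.

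The main obstacle is independence of shifts: showing there is no geometric isomorphism $[+a]^*\Hc\cong\Hc\otimes\Lc$ or $[+a]^*\Hc\cong D(\Hc)\otimes\Lc$ for a rank-one $\Lc$ and $a\in\G_m(\F_q)$. I would argue via Lemma \ref{lemma:translationInvariance}. A translate $[+a]^*\Hc$ has singularities at $-a$, $1-a$, $\infty$, while $\Hc$ and $D(\Hc)$ have singularities at $0$, $1$, $\infty$; by part \ref{item:translationInvariance:SingL} the finite singularities of $\Lc$ are constrained, and since $a\neq 0$ the sets $\{0,1\}$ and $\{-a,1-a\}$ cannot coincide, so $\Lc$ must be ramified at a point of $\A^1$ and by parts \ref{item:translationInvariance:LGeomTrivial}--\ref{item:translationInvariance:EP} it is geometrically nontrivial with $|\Sing(\Lc)|+\sum\Swan_x(\Lc)\ge 2$. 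One then compares break data at $\infty$: $\Hc$ (and $D(\Hc)$) has unique break $1/(n-m)$ at $\infty$ when $n>m$, so by part \ref{item:translationInvariance:UniqueBreak} $\Swan_\infty(\Lc)\le\lfloor 1/(n-m)\rfloor=0$, i.e.\ $\Lc$ is tame at $\infty$; hence $\Lc$ is tame everywhere with at least two finite singularities or one with a multiplicative twist, and by Lemma \ref{lemma:FKMClass}\ref{item:FKMClass3} such an $\Lc$ is a Kummer sheaf $\Lc_{\chi((X-x)/(X-y))}$ for $x,y$ among $\{0,1,-a,1-a\}$. Twisting $\Hc$ by such a Kummer sheaf changes only the tame local representations at $0$, $1$, $\infty$ and cannot produce the translate $[+a]^*\Hc$, whose local monodromy lives at the shifted points $-a$, $1-a$ — a contradiction. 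The case $n=m$ (hypothesis (1)), where $\Hc$ is tame at $\infty$, requires instead inspecting the tame local monodromy at $\infty$, which has a specific block structure determined by $\Gamma_q$ and the $\rho_j$ that is incompatible with a shift; alternatively one invokes \cite[8.8.1]{KatzESDE} together with the pairwise-distinctness of the $\chi_i,\rho_j$ to rule out the self-twist and the dual-twist relations. Packaging these ramification comparisons cleanly is the delicate part; everything else is an application of the cited results of Katz and of Lemmas \ref{lemma:translationInvariance} and \ref{lemma:FKMClass}.
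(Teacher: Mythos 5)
Your overall skeleton matches the paper's: cite Katz \cite[8.4.2, 8.11.2, 8.11.3]{KatzESDE} for the construction, rank, purity, irreducibility and $G_\geom^{0}=G_\geom^{0,\der}=\SL_n(\C)$; bound the conductor by the explicit ramification data; and use the special linear strategy of Section \ref{subsec:arithmGeomMono} together with Katz's computation of the arithmetic determinant \cite[8.12]{KatzESDE} to get $G_\geom=G_\arith=\SL_n(\C)$ (the paper checks that under the stated hypotheses the determinant sheaf is arithmetically trivial and $\alpha=1$, so no actual twist is needed). Where you diverge is the independence of shifts: the paper deduces it from Lemma \ref{lemma:translIsomUnip}, via the recursive description of the break decomposition in \cite[8.4.2(6)]{KatzESDE}, i.e.\ ultimately from the unipotent-block/two-distinct-characters criterion of Lemma \ref{lemma:translIsomUnipPrelim}, whereas you attempt a direct ramification comparison using Lemmas \ref{lemma:translationInvariance} and \ref{lemma:FKMClass}.

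Two points in that last step are genuine gaps. First, when $n>m\ge 1$ the local representation of $\Hc$ at $\infty$ is the direct sum of a tame part of dimension $m$ and a totally wild part of dimension $n-m$ with Swan conductor $1$; so $\Hc$ does \emph{not} have a unique break at $\infty$, and Lemma \ref{lemma:translationInvariance} \ref{item:translationInvariance:UniqueBreak} does not apply as stated. You can still conclude that $\Lc$ is tame at $\infty$, but by the direct argument: all breaks of $\Hc$ at $\infty$ are $<1$, so $\Swan_\infty(\Lc)\ge 1$ would force $\Swan_\infty(\Hc\otimes\Lc)=n\,\Swan_\infty(\Lc)>1=\Swan_\infty([+a]^*\Hc)$. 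Second, the concluding sentence ``twisting by such a Kummer sheaf \dots cannot produce the translate'' is an assertion, not an argument. The actual contradiction comes from comparing local monodromies at a shifted singularity: at a point $-a\notin\{0,1\}$, the sheaf $[+a]^*\Hc$ has local monodromy $\Hc(0)$, whose tame decomposition involves the $n\ge 2$ pairwise distinct characters $\chi_{i,q}$, while $\Hc\otimes\Lc$ is there a rank-one twist of a lisse sheaf, hence isotypic for a single tame character. This is precisely the hypothesis (``the trivial character appears, or at least two distinct characters appear'') under which Lemma \ref{lemma:translIsomUnipPrelim} operates, and it is where the pairwise distinctness of the $\chi_{i,q}$ enters; without spelling this out the proof does not close. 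The boundary cases $a\in\{1,-1\}$, the dual-twist variant $[+a]^*\Hc\cong D(\Hc)\otimes\Lc$, and the tame case $n=m$ (which you defer to an ``alternatively'') all require the same local comparison at $1-a$ or at the pseudo-reflection point $1$.
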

\begin{proof}
  The construction can be found in \cite[Theorem 8.4.2]{KatzESDE}. We find that $\cond(\Hc(\bs\chi,\bs\rho))=n+3$.

  The connected component at the identity $G^0_\geom$ is computed in \cite[Theorems 8.11.2, 8.11.3]{KatzESDE}, and can be $\SL_n(\C), \ \Sp_n(\C), \ \SO_n(\C)$, plus some exceptional cases in low rank. Moreover, $G_\geom^0=G^{0,\der}_\geom$. The distinction between the possible cases is not straightforward (see \cite[p. 291]{KatzESDE}), but $G_\geom^0=G^{0,\der}_\geom=\SL_n(\C)$ if either
  \begin{enumerate}
  \item $n=m$ is odd, $\Lambda_q=1$ or
  \item $n-m\ge 3$ is odd.
  \end{enumerate}
To make the arithmetic and geometric monodromy group coincide, we use the strategy of Section \ref{subsec:arithmGeomMono}. By the computation of the arithmetic determinant in \cite[8.12]{KatzESDE}, there is an explicit Weil number $\alpha=\alpha(\bs\chi,\bs\rho)\in\overline{\Q}_\ell$ of weight 0 such that $\det\Hc(\bs\chi,\bs\rho)\cong \alpha\otimes \Lc$ with
\[\Lc=
\begin{cases}
  \Lc_\Lambda\otimes[x\mapsto 1-x]^*\Lc_{\Gamma/\Lambda}&\text{ if }n=m,\\
  \Lc_\psi\otimes\Lc_\Lambda&\text{ if }n-m=1,\\
  \Lc_\Lambda&\text{ if }n-m\ge 2.
\end{cases}
\]
Under the assumptions of the proposition, $\Lc$ is arithmetically trivial and $\alpha=1$.

The break decomposition of the hypergeometric sheaf is determined recursively in \cite[Theorem 8.4.2(6)]{KatzESDE}, and the independence of shifts is then a consequence of Lemma \ref{lemma:translIsomUnip}.
\end{proof}

\begin{example}
  Thus, families of hypergeometric sums of the form
\[\frac{(-1)^{r-1}}{q^{(r-1)/2}}\sum_{\substack{\bs x\in\F_q^n, \bs y\in\F_q^n\\N(\bs x)=tN(\bs y)} } \left(\prod_{i=1}^{n-1} \chi_i(x_ix_n^{-1})\overline{\rho_i(y_iy_n^{-1})}\right)  e\left(\frac{\tr(T(\bs x)-T(\bs y))}{p}\right) \ (t\in\F_q)\]
with $n$ odd or
\[\frac{(-1)^{r-1}}{q^{(r-1)/2}}\sum_{\substack{\bs x\in\F_q^n, \bs y\in\F_q^m\\N(\bs x)=tN(\bs y)} } \left(\prod_{i=1}^n \chi_i(x_ix_n^{-1})\prod_{j=1}^m \overline{\rho_j(y_j)}\right) e\left(\frac{\tr(T(\bs x)-T(\bs y))}{p}\right) \ (t\in\F_q)\]
with $n-m\ge 3$ odd, are coherent.

  For $m=0$ and $\bs\chi=(1)_{1\le i\le n}$, we recover the Kloosterman sheaf $\Klc_{n}$.
\end{example}

\subsection{General exponential sums of the form \eqref{eq:expsumGen}}
\begin{proposition}\label{prop:genExpSum}
  Let $f,g,h\in\Q(X)$. If $q$ is large enough to consider $f,g,h\in\F_q(X)$ and $g$ (resp. $h$) has no pole or zero (resp. no pole) of order divisible by $p$, we consider the sheaves
  \[\Fc_1=\Lc_{\psi(h)}\otimes\Lc_{\chi(g)}, \ \Fc_2=f_*\Fc_1,\]
  for $\psi:\F_q\to\C$ (resp. $\chi:\F_q^\times\to\C$) an additive (resp. multiplicative) character. If $\Fc_2$ is a Fourier sheaf, then there exists a sheaf $\Gc=\FT_\psi(\Fc_2)$ (the $\ell$-adic Fourier transform of $\Fc_2$) with trace function given by \eqref{eq:expsumGen}. Moreover, $\cond(\Gc)$ is bounded above independently from $q$.
\end{proposition}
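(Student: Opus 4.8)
The plan is to realize $\Gc$ as the $\ell$-adic Fourier transform of $\Fc_2 = f_*\Fc_1$, following the chain of constructions in the statement ($\Fc_1$, then $\Fc_2 = f_*\Fc_1$, then $\Gc = \FT_\psi(\Fc_2)$), while carrying along at each step both the trace function and the conductor. Computing the trace function is purely formal — it is an interchange of summations — and the only point that genuinely needs care is the \emph{uniformity in $q$} of the conductor bounds; this is exactly where the divisibility hypotheses on the orders of the zeros and poles of $g$ and $h$ are used, so I would single that out as the main obstacle.

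\textbf{Step 1: the rank one sheaf $\Fc_1$.} First I would recall from \cite[Ch.~4]{KatzGKM} and \cite[Exp.~6]{DelEC} the Kummer sheaf $\Lc_{\chi(g)}$ and the Artin--Schreier sheaf $\Lc_{\psi(h)}$: both are lisse of rank one, with trace functions $z\mapsto\chi(g(z))$ and $z\mapsto\psi(h(z))$, on the nonempty open subset of $\A^1$ where $g$ is finite and nonzero and $h$ is finite. The Kummer sheaf is everywhere tame, ramified only at the zeros and poles of $g$ and possibly at $\infty$; the Artin--Schreier sheaf is tame away from the poles of $h$, where it is wild with $\Swan_x(\Lc_{\psi(h)})$ equal to the pole order of $h$ at $x$. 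The hypotheses that no zero or pole of $g$, and no pole of $h$, has order divisible by $p$ serve two purposes here: they ensure that $g\bmod p$ and $h\bmod p$ still have zeros and poles of the expected orders, and they pin down the local Swan conductors to values that do not depend on $q$. Setting $\Fc_1=\Lc_{\psi(h)}\otimes\Lc_{\chi(g)}$ (middle extension to $\P^1$) gives a rank one middle-extension sheaf whose singularities lie among the zeros and poles of $g$, the poles of $h$, and $\infty$, and whose local Swan conductors are bounded by the pole orders of $h$; by Grothendieck--Ogg--Shafarevich, $\cond(\Fc_1)$ is bounded solely in terms of $\deg g$ and $\deg h$.

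\textbf{Step 2: the pushforward $\Fc_2=f_*\Fc_1$.} Viewing $f$ as a finite flat morphism $\P^1\to\P^1$ of degree $\deg f$, the direct image $\Fc_2=f_*\Fc_1$ is again a middle-extension sheaf, lisse of rank $\deg f$ away from the branch locus of $f$ together with $f(\Sing(\Fc_1))$, and its wild ramification — hence its local Swan conductors — is governed by that of $\Fc_1$ and by the ramification of $f$, which is itself controlled by $\deg f$. Thus $\cond(\Fc_2)$ is again bounded independently of $q$. For the trace function, the Grothendieck--Lefschetz formula (equivalently, the fact that $\Frob_y$ permutes the non-rational points of $f^{-1}(y)$ in orbits of trace zero) gives, for $y\in\A^1(\F_q)$,
\[
t_{\Fc_2}(y)\;=\;\sum_{\substack{z\in\F_q\\ f(z)=y}} t_{\Fc_1}(z)\;=\;\sum_{\substack{z\in\F_q\\ f(z)=y}} \chi(g(z))\,\psi(h(z)),
\]
with the usual convention that a term vanishes when $z$ is a zero or pole of $g$ or a pole of $h$, this being exactly the value of $t_{\Fc_1}$ on the inertia invariants at such a point.

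\textbf{Step 3: Fourier transform and conductor bound.} Since $\Fc_2$ is assumed to be a Fourier sheaf, $\Gc:=\FT_\psi(\Fc_2)$ is again a single middle-extension sheaf on $\A^1$, and (up to the standard global sign of the $\ell$-adic Fourier transform, which is harmless here and may be absorbed into a twist as in Section \ref{subsec:examplesTwisting}) its trace function on the lisse locus is $x\mapsto q^{-1/2}\sum_{y\in\F_q} t_{\Fc_2}(y)\,\psi(xy)$; see \cite[Ch.~8]{KatzGKM}. Substituting Step 2 and reindexing the double sum over pairs $(y,z)$ with $f(z)=y$ by $z$ alone,
\[
t_\Gc(x)\;=\;\frac{1}{\sqrt q}\sum_{z\in\F_q}\psi\big(xf(z)+h(z)\big)\,\chi(g(z)),
\]
which is \eqref{eq:expsumGen}. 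Finally, the uniform bound on $\cond(\Gc)$ follows from the general estimate $\cond(\FT_\psi(\Fc))\ll\cond(\Fc)^2$ for the conductor of an $\ell$-adic Fourier transform (see \cite{FKMSumProducts} and the references there), applied to $\Fc=\Fc_2$ and combined with Step 2. As noted above, the crux of the argument is not any of these computations but the uniformity in $q$: the divisibility hypotheses guarantee that reduction modulo $p$ does not collapse the ramification of $\Lc_{\chi(g)}$ or $\Lc_{\psi(h)}$, so that $\Sing$ and the local Swan conductors of $\Fc_1$, and hence of $\Fc_2$ and of $\Gc$, stay bounded purely in terms of the fixed data $f,g,h$.
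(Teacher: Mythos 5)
Your proposal is correct and follows essentially the same route as the paper, which simply cites \cite[Chapters 5, 8]{KatzGKM} for the construction and trace-function identity of the $\ell$-adic Fourier transform, and the general bound $\cond(\FT_\psi(\Fc))\ll\cond(\Fc)^2$ of \cite[Proposition 8.2]{AlgebraicTwists} (resting on Laumon's local analysis \cite[7.3--7.5]{KatzESDE}) for the uniform conductor bound. Your more explicit tracking of the conductor through $\Fc_1$ and $f_*\Fc_1$, and your handling of the sign in the Fourier transform normalization, are consistent with what the paper leaves to the references.
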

\begin{proof}
  The construction of the $\ell$-adic Fourier transform can be found in \cite[Chapters 5, 8]{KatzGKM}. The uniform bound on the conductor follows from the general bound on conductors of Fourier transforms \cite[Proposition 8.2]{AlgebraicTwists}, obtained from Laumon's analysis of the ramification of $\ell$-adic Fourier transforms \cite[7.3--7.5]{KatzESDE}.
\end{proof}

We can distinguish the following cases:
\begin{enumerate}[(i)]
\item $h=0$ and $\chi=1$, so that $\Fc_1$ is the trivial sheaf. These are sums of the form \eqref{eq:FouvryMichel}, studied in \cite[7.10]{KatzESDE} and by Fouvry-Michel in \cite{MichelMinorationsSommesExp}, \cite{FouvryMichelRecherche} and \cite{FouvryMichelSommes}.
\item $\Fc_1$ is nontrivial and $f=X$. More particularly, we consider the case $\chi=1$ and $h$ a polynomial of degree $n\ge 2$, which includes Birch sums \eqref{eq:BS}. These are studied in \cite[7.12]{KatzESDE} and \cite{KatzMonodromyFamES}.
\item $\Fc_1$ is nontrivial and $f\neq X$. More particularly, we will consider the case studied in \cite[7.7, 7.13, 7.14]{KatzESDE} where $h$ is odd with a pole of order $\ge 1$ at $\infty$, $f\neq 0$ is an odd polynomial, and there exists an even or odd rational function $L$ with $g(x)g(-x)=L(x)^{\ord(\chi)}$.
\end{enumerate}

\subsubsection{Independence of shifts}

The following criterion generalizes the argument of \cite{FKMSumProducts} for Birch sums to sheaves of the general form of Proposition \ref{prop:genExpSum} and allows to reduce to the case of $\Lc$ being an Artin-Schreier sheaf in a geometric isomorphism of the form \eqref{eq:translatinIsom}.

\begin{lemma}\label{lemma:translIsomnd}
  In the setting of Proposition \ref{prop:genExpSum}, let us assume that $\Fc_2$ is a Fourier sheaf, $f$ a polynomial of degree $d\ge 1$, $n=\Swan_\infty(\Fc_1)>d$, and $(n,d)=(d,p)=1$. If there is a geometric isomorphism of the form \eqref{eq:translatinIsom} with $a\neq 0$ for $\Gc=\FT_{\psi}(\Fc_2)$, then
\[\Swan_\infty(\Lc)\in\left\{0,1,\dots,\left\lfloor \frac{n}{n-d}\right\rfloor\right\}.\]
 If $n>2d$, there exists an additive character $\psi_1: \F_q\to\C^\times$ such that $\Lc\cong\Lc_{\psi_1}$.
\end{lemma}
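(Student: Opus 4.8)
The approach is to pin down the break decomposition of $\Gc=\FT_\psi(\Fc_2)$ at $\infty$ by means of Laumon's local theory of the $\ell$-adic Fourier transform (in the form presented in \cite[7.4]{KatzESDE}), and then to compare it with the break decomposition forced by a geometric isomorphism of the form \eqref{eq:translatinIsom}, using Lemmas \ref{lemma:translationInvariance} and \ref{lemma:FKMClass}. \emph{Local structure of $\Gc$ at $\infty$.} Since $f$ is a polynomial of degree $d$ with $(d,p)=1$, the induced map $f\colon\P^1\to\P^1$ is totally and tamely ramified above $\infty$ with index $d$; as $\Fc_1$ has rank $1$ and unique break $n=\Swan_\infty(\Fc_1)$ at $\infty$, the $I_\infty$-representation $\Fc_2(\infty)=(f_*\Fc_1)(\infty)$ has dimension $d$ and unique break $n/d>1$ (pushing forward along $f$ divides the break by the ramification index). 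Feeding this into Laumon's stationary-phase decomposition of $\hat\Fc_2$ at $\infty$: the pieces $\FT(s,\infty)(-)$ attached to the finite singularities $s$ of $\Fc_2$ have all breaks $\le 1$, while $\FT(\infty,\infty)(\Fc_2(\infty))$ has unique break $\tfrac{n/d}{\,n/d-1\,}=\tfrac{n}{n-d}=:\lambda>1$; moreover, because $\Fc_2(\infty)$ has no break $\le 1$, the sheaf $\Gc$ is lisse on $\A^1$, so $\Sing(\Gc)=\{\infty\}$. In particular every break of $\Gc$ at $\infty$ is $\le\lambda$.

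\emph{Comparison of breaks.} Suppose \eqref{eq:translatinIsom} holds with $a\neq 0$, and set $\sigma=\Swan_\infty(\Lc)\in\Z_{\ge0}$ (recall $\Lc$ has rank $1$). Translation by $a$ fixes $\infty$ and induces an automorphism of the completed local ring there, so $[+a]^*\Gc$ has the same breaks at $\infty$ as $\Gc$; the same holds for $D(\Gc)$. If $\sigma>\lambda$, then in particular $\sigma>1$, so tensoring $\Gc(\infty)$ — or $D(\Gc)(\infty)$ — all of whose breaks are $\le\lambda<\sigma$, with the rank-one $I_\infty$-representation $\Lc(\infty)$ of break $\sigma$ produces a representation all of whose breaks equal $\sigma>\lambda$; this contradicts the fact that $[+a]^*\Gc(\infty)$ has all breaks $\le\lambda$. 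Hence $\sigma\le\lambda=\tfrac{n}{n-d}$, and being a nonnegative integer, $\Swan_\infty(\Lc)\in\{0,1,\dots,\lfloor\tfrac{n}{n-d}\rfloor\}$.

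\emph{The range $n>2d$, and the main obstacle.} When $n>2d$ one has $\tfrac{n}{n-d}<2$, so the bound just obtained forces $\Swan_\infty(\Lc)\le 1$. Since $\Sing(\Gc)=\{\infty\}$ is fixed by $[+a]$, Lemma \ref{lemma:translationInvariance}\ref{item:translationInvariance:SingL} gives $\Sing(\Lc)\subseteq\Sing(\Gc)\cup(\Sing(\Gc)-a)=\{\infty\}$, so $\Lc$ is lisse on $\A^1$. If $\Sing(\Lc)=\varnothing$, then $\Lc$ is geometrically trivial by Lemma \ref{lemma:FKMClass}\ref{item:FKMClass1}, hence $\Lc\cong\Lc_{\psi_1}$ for the trivial additive character; if $\Sing(\Lc)=\{\infty\}$ with $\Swan_\infty(\Lc)\le 1$, then Lemma \ref{lemma:FKMClass}\ref{item:FKMClass4} (case $x=\infty$) gives $\Lc\cong\Lc_{\psi_1}$ for a nontrivial additive character $\psi_1$. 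Either way $\Lc\cong\Lc_{\psi_1}$. The delicate step is the first one: correctly extracting from Laumon's local Fourier transform that all breaks of $\Gc$ at $\infty$ lie in $[0,1]\cup\{\lambda\}$ and that $\Gc$ is lisse on $\A^1$ — this is precisely where the hypotheses $n=\Swan_\infty(\Fc_1)>d$ and $(n,d)=(d,p)=1$ are used (to compute the break of $\Fc_2$ at $\infty$ under $f_*$ and to make the $\FT(\infty,\infty)$ numerology unambiguous). Once the local picture at $\infty$ is in place, the break comparison and the appeal to the classification lemmas are routine.
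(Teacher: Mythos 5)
Your proposal is correct and follows essentially the same route as the paper's proof: both determine the break structure of $\Gc$ at $\infty$ via Laumon's local Fourier transform (the break $\frac{n}{n-d}$ coming from $\FT(\infty,\infty)$ applied to the break-$n/d$ part of $\Fc_2=f_*\Fc_1$), compare breaks under tensoring with the rank-one sheaf $\Lc$ to bound $\Swan_\infty(\Lc)$, and invoke Lemma \ref{lemma:FKMClass}\ref{item:FKMClass4} when $n>2d$. The only cosmetic difference is that you obtain $\Swan_\infty(\Lc)\le\lfloor n/(n-d)\rfloor$ directly from the integrality of the Swan conductor, whereas the paper cites Lemma \ref{lemma:translationInvariance}\ref{item:translationInvariance:UniqueBreak} together with the non-integrality of $n/(n-d)$; the underlying break-comparison argument is the same.
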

\begin{proof}
  By \cite[7.7]{KatzESDE}, $\Fc_2$ has unique break $n/d$ at $\infty$, thus 
\[\Swan_\infty(\Fc_2)=(n/d)\rank(\Fc_2)=n.\] Moreover, $\Gc$ is lisse on $\A^1$. By Lemma \ref{lemma:translationInvariance} \ref{item:translationInvariance:SingL}, $\Sing(\Lc)\subset\{\infty\}$. We may assume that $\Lc$ is not geometrically trivial, the conclusions being clear otherwise. By Lemma \ref{lemma:translationInvariance} \ref{item:translationInvariance:EP}, it follows that $\Sing(\Lc)=\{\infty\}$ and $\Swan_\infty(\Lc)\ge 1$.

By \cite[7.4.1(1)]{KatzESDE}, $\Gc$ has unique break $\frac{n}{n-d}$ at $\infty$, with multiplicity
\[\frac{n-d}{n}\Swan_\infty(\Fc_2)=n-d.\]
The break $\frac{n}{n-d}$ is not an integer since we assume that $(n,d)=1$, and the first conclusion follows from Lemma \ref{lemma:translationInvariance} \ref{item:translationInvariance:UniqueBreak}. For the second one, note that $\frac{n}{n-d}<2$ if $n>2d$ and use Lemma \ref{lemma:FKMClass} \ref{item:FKMClass4}.
\end{proof}

The next lemma consequently considers isomorphisms of the form \eqref{eq:translatinIsom} when $\Lc$ is an Artin-Schreier sheaf.
\begin{lemma}\label{lemma:translIsomExplicit}
  In the setting of Proposition \ref{prop:genExpSum}, let us assume that $\Fc_2$ is a Fourier sheaf and that there is an isomorphism of the form \eqref{eq:translatinIsom} for $\Gc$ with $a\in\F_q^\times$ and $\Lc=\Lc_{\psi_1}$ for some additive character $\psi_1: \F_q\to\C^\times$. Then
  \begin{enumerate}
  \item $\Sing(\Fc_2)=\{\infty\}$ or $\A^1(\F_q)\subset\Sing(\Fc_2)$.
  \item If $f=X$, then either $\chi \neq 1$ and $g$ is constant, or $\chi=1$ and $h$ is a polynomial of degree at most $2$.
  \end{enumerate}
\end{lemma}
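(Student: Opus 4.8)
The plan is to transport the hypothetical geometric isomorphism \eqref{eq:translatinIsom} for $\Gc=\FT_\psi(\Fc_2)$ back through the Fourier transform to an analogous statement about $\Fc_2$, and then to read off the rigidity of $\Fc_2 = f_*(\Lc_{\psi(h)}\otimes\Lc_{\chi(g)})$ from its ramification data. For the first claim, I would argue as in Lemma~\ref{lemma:translIsomnd}: by Lemma~\ref{lemma:translationInvariance}\,\ref{item:translationInvariance:SingL} an isomorphism $[+a]^*\Gc\cong\Gc\otimes\Lc_{\psi_1}$ forces $\Sing(\Lc_{\psi_1})\subset\{\infty\}$ (an Artin--Schreier sheaf is already lisse on $\A^1$), and since $\Gc$ is lisse on $\A^1$, $\Gc\otimes\Lc_{\psi_1}$ is too, so $[+a]^*\Gc$ is lisse on $\A^1$; that is automatic. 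The real input is the Fourier-transform dictionary: for $a\neq 0$, $[+a]^*\FT_\psi(\Fc_2)\cong\FT_\psi(\Lc_{\psi(aX)}\otimes\Fc_2)$ up to a constant, and $\FT_\psi(\Fc_2)\otimes\Lc_{\psi_1}=\FT_\psi([+b]^*\Fc_2)$ for the $b$ with $\psi_1=\psi(b\,\cdot\,)$ (again up to constant), using the standard translation/twist interchange for $\FT_\psi$ from \cite[Chapter~8]{KatzGKM}. By Fourier inversion this yields a geometric isomorphism $\Lc_{\psi(aX)}\otimes\Fc_2\cong[+b]^*\Fc_2\otimes(\text{const})$, hence $[+b]^*\Fc_2\cong\Fc_2\otimes\Lc_{\psi(aX)}$, i.e. an isomorphism of the form \eqref{eq:translatinIsom} for $\Fc_2$ with the rank-$1$ sheaf being $\Lc_{\psi(aX)}$, which \emph{is} singular only at $\infty$ but with $\Swan_\infty=1$. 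Now apply Lemma~\ref{lemma:translationInvariance}\,\ref{item:translationInvariance:SingL} to $\Fc_2$: $\Sing(\Fc_2)\,\Delta\,(\Sing(\Fc_2)-b)\subset\Sing(\Lc_{\psi(aX)})=\{\infty\}$, so $\Sing(\Fc_2)\cap\A^1(\F_q)$ is translation-invariant by $b\neq 0$, forcing it to be $\varnothing$ or all of $\A^1(\F_q)$; that is claim (1).

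For claim (2), specialize to $f=X$, so $\Fc_2=\Fc_1=\Lc_{\psi(h)}\otimes\Lc_{\chi(g)}$. The key step is to use claim~(1) together with the explicit ramification of $\Fc_1$. If $\chi\neq 1$ and $g$ is non-constant, then $\Fc_1$ is ramified at every zero and pole of $g$ (finitely many points in $\A^1$, and not all of $\A^1$), so $\Sing(\Fc_1)\cap\A^1(\F_q)$ is a nonempty proper subset of $\A^1(\F_q)$, contradicting claim~(1); hence either $\chi\neq 1$ forces $g$ constant, or $\chi=1$. When $\chi=1$ we have $\Fc_1=\Lc_{\psi(h)}$, which is singular on $\A^1$ exactly at the poles of $h$; by claim~(1) either $h$ has no finite pole, or $h$ has a pole at every point of $\A^1(\F_q)$ — the latter is impossible for a rational function, so $h$ is a polynomial. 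To bound its degree I would go back to the isomorphism \eqref{eq:translatinIsom} for $\Fc_1=\Lc_{\psi(h)}$ itself: $[+b]^*\Lc_{\psi(h)}=\Lc_{\psi(h(X+b))}$ must be isomorphic to $\Lc_{\psi(h)}\otimes\Lc_{\psi(aX)}=\Lc_{\psi(h(X)+aX)}$, i.e. $\Lc_{\psi(h(X+b)-h(X)-aX)}$ is geometrically trivial, which for an Artin--Schreier sheaf (with $p$ not dividing the relevant degree) means $h(X+b)-h(X)-aX$ is an additive polynomial of the form $Y^p-Y$ plus a constant; comparing top-degree terms, $h(X+b)-h(X)$ has degree $\deg h - 1$, while the right side has degree $\max(1,\ \text{multiple of }p)$, which pins $\deg h \le 2$ (the case $\deg h = 1$ giving $\Lc_{\psi(h)}$ itself an Artin--Schreier sheaf, absorbed into $\Lc$). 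This is the one place where the hypothesis $(d,p)=1$-type conditions and $p$ large matter, to exclude wild cancellations.

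The main obstacle I anticipate is making the Fourier-transform bookkeeping in the first paragraph fully precise: the translation/twist interchange for $\FT_\psi$ comes with normalizing constants and, more seriously, one must check that $\Fc_2$ (and $\Lc_{\psi(aX)}\otimes\Fc_2$) remain Fourier sheaves so that $\FT_\psi\circ\FT_{\psi}$ really is (up to $[x\mapsto -x]$ and a Tate twist) the identity; this is exactly the point where the hypothesis ``$\Fc_2$ is a Fourier sheaf'' is used, and one should invoke Laumon's stationary phase / the properties of $\FT_\psi$ from \cite[7.3--7.5]{KatzESDE} and \cite[Chapters~5,8]{KatzGKM} rather than re-proving them. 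A secondary subtlety is the degree bound in claim~(2): one has to handle the possibility that $p \mid \deg h$, where $h(X+b)-h(X)$ could drop in degree; but this is precisely excluded by the standing assumption (in Proposition~\ref{prop:genExpSum}) that $h$ has no pole — hence in the polynomial case no term — of order divisible by $p$, so the leading-term comparison goes through and $\deg h\le 2$ follows cleanly.
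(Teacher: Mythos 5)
Your core step is exactly the paper's: conjugate the isomorphism $[+a]^*\Gc\cong\Gc\otimes\Lc_{\psi(bX)}$ by the Fourier transform, using $[+a]^*\FT_\psi(\Fc)\cong\FT_\psi(\Fc\otimes\Lc_{\psi(aX)})$ and $\FT_\psi(\FT_\psi(\Fc)\otimes\Lc_{\psi(bX)})\cong[x\mapsto -b-x]^*\Fc$, to obtain $\Fc_2\otimes\Lc_{\psi(aX)}\cong[+(-b)]^*\Fc_2$, and then read off claim (1) from the fact that $\Lc_{\psi(aX)}$ is lisse on $\A^1$, so that $\Sing(\Fc_2)\cap\A^1(\F_q)$ is invariant under translation by $b$. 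Two points need attention. First, your argument for (1) is vacuous when $b=0$ (i.e.\ $\psi_1$ trivial): invariance under translation by $0$ says nothing. The paper disposes of this case separately by taking determinants in $\Fc_2\otimes\Lc_{\psi(aX)}\cong\Fc_2$, which forces $a=0$ and contradicts $a\in\F_q^\times$; you need some such step. Relatedly, to obtain the alternative ``$\Sing(\Fc_2)=\{\infty\}$'' rather than ``$\Sing(\Fc_2)\subset\{\infty\}$'' one should note that $\Sing(\Fc_2)\neq\varnothing$ (Lemma \ref{lemma:FKMClass} \ref{item:FKMClass1}). Second, you do not treat the variant $[+a]^*\Gc\cong D(\Gc)\otimes\Lc$ of \eqref{eq:translatinIsom}, which the paper handles as ``similar''.

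For claim (2) you take a genuinely different, and slightly weaker, route: you deduce it from claim (1), which forces you to discard the alternative $\A^1(\F_q)\subset\Sing(\Fc_2)$, i.e.\ to assume $q$ exceeds the number of singularities of $\Fc_1$ --- an assumption the lemma does not make (that degenerate alternative is kept in the statement precisely so it can be excluded later, where conductors are bounded and $q$ is large). The paper instead specializes the transported isomorphism directly to $f=X$, obtaining $\Lc_{\psi(h(X)-h(X-b)+aX)}\cong\Lc_{\chi(g(X-b)/g(X))}$, and concludes from tameness versus wildness that either $\chi=1$ or $g(X-b)/g(X)$ is constant (hence $g$ constant), and in the former case solves the difference equation $h(x)-h(x-b)+ax=\mathrm{const}$ to get $\deg h\le 2$. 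Your difference-equation endgame is the same; just be careful that ruling out $h(X+b)-h(X)-aX=v^p-v+c$ with $v$ nonconstant uses $p>\deg h$ (from ``$p$ large''), while the hypothesis that the pole of $h$ at $\infty$ has order prime to $p$ (it says nothing about individual monomials) is what guarantees the leading term of $h(X+b)-h(X)$ does not vanish.
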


\begin{remark}
  Since we consider families of sheaves whose conductors are bounded uniformly from $q$, the condition $\A^1(\F_q)\subset\Sing(\Fc_2)$ is clearly exceptional.
\end{remark}

\begin{proof}
  Let $b\in\F_q$ such that $\psi_1(x)=\psi(bx)$ ($x\in\F_q$) and let us assume that we have a geometric isomorphism
  \[[+a]^*\Gc\cong\Gc\otimes\Lc_{\psi(bX)}\]
  with $a\in\F_q^\times$. Taking Fourier transform on both sides of the isomorphism and using that
  \begin{eqnarray*}
    {}[+a]^*\FT_\psi(\Fc)&\cong&\FT_\psi(\Fc\otimes\Lc_{\psi(aX)})\\
    \FT_\psi(\FT_\psi(\Fc)\otimes\Lc_{\psi(bX)})&\cong&[x\mapsto-b-x]^*\Fc
  \end{eqnarray*}
  for any Fourier sheaf $\Fc$, we get a geometric isomorphism
   \begin{equation}
    \label{eq:translIsomExplicit}
    \Fc_2\otimes\Lc_{\psi(aX)}\cong[+(-b)]^*\Fc_2.
  \end{equation}
  Then:
  \begin{itemize}
  \item If $b=0$, taking determinants shows that $a=0$.
  \item Since the Artin-Schreier sheaf is ramified at most at $\infty$, we have $\Sing(\Fc_2)\cap\A^1(\F_q)=(\Sing(\Fc_2)\cap\A^1(\F_q))+b$. If $b\neq 0$, this yields
  \[\Sing(\Fc_2)=\varnothing, \ \{\infty\},\text{ or }\A^1(\F_q)\subset\Sing(\Fc_2)\]
  because for any $y\in\F_q$, $b\in\F_q^\times$, the map $\F_q\to\F_q$, $x\mapsto y+xb$, is a bijection. By Lemma \ref{lemma:FKMClass}, $\Sing(\Fc_2)\neq\varnothing$ because we assume that $\Fc_2$ is geometrically irreducible and not geometrically trivial.
  \end{itemize}
  
  If $f=X$ and $b\neq 0$, the geometric isomorphism \eqref{eq:translIsomExplicit} becomes
  \[\Lc_{\psi(h(X)-h(X-b)+aX)}\cong\Lc_{\chi(g(X-b)/g(X))}.\]
  Since the Kummer sheaf is tame while the Artin-Schreier sheaf is not, we must have $\chi=1$ or $x\mapsto g(x-b)/g(x)$ constant on $\F_q$. If $\chi=1$, then
  \[x\mapsto h(x)-h(x-b)+ax\text{ is constant on }\F_q,\]
  i.e. $h(x)=-ab^{-1}x^2/2+ax/2+\text{constant}$. On the other hand, if $x\mapsto g(x-b)/g(x)$ is constant, then $g$ is constant.

  The case with a geometric isomorphism $[+a]^*\Gc\cong D(\Gc)\otimes\Lc_{\psi(bX)}$ is similar.
\end{proof}
\subsubsection{Sums of the form \eqref{eq:FouvryMichel}}

  \begin{proposition}\label{prop:FouvryMichelModelable}
  Let $f\in\Q(X)$ and let $Z_{f'}$ be the set of zeros of $f'$ in $\C$. We assume that either
  \begin{itemize}
  \item $(H)$: $k_f=|Z_{f'}|$ is even, $\beta=\sum_{z\in Z_{f'}} f(z)=0$, and if $s_1-s_2=s_3-s_4$ with $s_i\in f(Z_{f'})$, then $s_1=s_3,s_2=s_4$ or $s_1=s_2,s_3=s_4$.
  \item $(H')$: $f$ is odd, and if $s_1-s_2=s_3-s_4$ with $s_i\in f(Z_{f'})$, then $s_1=s_3,s_2=s_4$ or $s_1=s_2,s_3=s_4$ or $s_1=-s_4,s_2=-s_3$.
  \end{itemize}
  For $q$ large enough, there exists an $\ell$-adic sheaf $\Gc_{f,q}$ over $\F_q$ with trace function
  \[x\mapsto \frac{-1}{\sqrt{q}}\sum_{y\in\F_q}e\left(\frac{\tr(xf(y))}{p}\right)\hspace{0.2cm} (x\in\F_q).\]
  Moreover, there exist Weil numbers $\alpha_q\in\overline\Q_\ell$ of weight $0$ such that the family of $\ell$-adic sheaves $(\alpha_q\otimes\Gc_{f,q})_q$ is coherent, with monodromy group $\SL_{k_f}(\C)$ if $(H)$ holds, and $\Sp_{k_f}$ if $(H')$ holds (in which case $\alpha_q=1$).
\end{proposition}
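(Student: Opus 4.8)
The plan is to realize $\Gc_{f,q}$ as an $\ell$-adic Fourier transform and then lean on Katz's monodromy computations. This is case (i) of the trichotomy above: with $h=0$ and $\chi=1$ in Proposition \ref{prop:genExpSum} one has $\Fc_1=\overline\Q_\ell$ and $\Fc_2=f_*\Fc_1=f_*\overline\Q_\ell$; since $f_*\overline\Q_\ell$ carries a trivial summand, take $\Gc_{f,q}=\FT_\psi(\Fc_2)$ with $\Fc_2=f_*\overline\Q_\ell/\overline\Q_\ell$, a geometrically irreducible Fourier sheaf, pointwise pure of weight $0$. By Proposition \ref{prop:genExpSum} and Katz's discussion in \cite[7.9--7.10]{KatzESDE}, $\Gc_{f,q}$ exists, has $\cond(\Gc_{f,q})$ bounded in terms of $\deg f$ only, and has trace function $x\mapsto-q^{-1/2}\sum_{y\in\F_q}e(\tr(xf(y))/p)$ (the sign being that of the normalization of $\FT_\psi$; the identity of trace functions may fail at $x=0$, which is irrelevant for the distribution of the sums $S(t_q,I_q+x)$). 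It then remains to verify the three conditions of Definition \ref{def:coherentFamily} in the classical case: the conductor bound (done), the identification of the monodromy groups (after a twist), and the independence of shifts.

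For the geometric monodromy group I would quote Katz \cite[7.10 and Chapter 7]{KatzESDE} together with the related work of Fouvry--Michel (\cite{MichelMinorationsSommesExp}, \cite{FouvryMichelRecherche}, \cite{FouvryMichelSommes}). Since $f$ is a polynomial, totally ramified of degree $k_f+1$ at infinity (so $k_f=\deg f-1$), $\Gc_{f,q}$ is lisse on $\G_m$; by Laumon's stationary phase its local monodromy at $0$ is tame, equal to the regular representation of the cyclic local monodromy of $f_*\overline\Q_\ell$ at $\infty$ with the trivial character removed, hence a direct sum of $k_f$ pairwise distinct tame characters $\mu_1,\dots,\mu_{k_f}$; and its local monodromy at $\infty$ has all breaks equal to $1$ and decomposes as $\bigoplus_{z\in Z_{f'}}\Lc_{\psi(f(z)X)}\otimes(\text{rank }1\text{, tame})$, with the critical values $f(z)$ as pairwise distinct frequencies. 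The combinatorial hypotheses in $(H)$ (resp.\ $(H')$) — that $s_1-s_2=s_3-s_4$ among critical values holds only for the obvious reasons (resp.\ also up to a global sign, $f$ being odd) — are precisely Katz's genericity conditions excluding extra geometric endomorphisms and imprimitivity, and they give $G_\geom^0=G_\geom^{0,\der}=\SL_{k_f}(\C)$ under $(H)$, while under $(H')$ the oddness of $f$ provides a skew-symmetric autoduality forcing $G_\geom=\Sp_{k_f}(\C)$.

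To pass to $G_\arith=G_\geom$ I follow the recipe of Section \ref{subsec:arithmGeomMono}. In the symplectic case $(H')$, Katz's autoduality is already arithmetic, so $G_\arith\subseteq\Sp_{k_f}(\C)$ and hence $G_\arith=G_\geom=\Sp_{k_f}(\C)$ with no twist ($\alpha_q=1$). In the linear case $(H)$ one has $G_\geom^{0,\der}=\SL_{k_f}(\C)$, so it suffices to show that $\det\Gc_{f,q}$ is geometrically trivial and then to correct the arithmetic determinant by a constant: its local monodromy at $0$ is $\mu_1\cdots\mu_{k_f}$, a product over a full orbit of $k_f$ tame characters, trivial exactly because $k_f$ is even (this is the role of that hypothesis in $(H)$), and its break at $\infty$ has frequency $\sum_{z\in Z_{f'}}f(z)=\beta$, so $\beta=0$ makes it tame there; as a rank-$1$ sheaf lisse on $\G_m$, unramified at $0$ and tame at $\infty$ is geometrically trivial, we get $\det\Gc_{f,q}$ geometrically trivial, hence arithmetically isomorphic to $\gamma_q\otimes\overline\Q_\ell$ for an explicit weight-$0$ Weil number $\gamma_q$ (a Gauss-sum type quantity from the local determinant computation). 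Taking $\alpha_q$ to be a $k_f$-th root of $\gamma_q^{-1}$ and replacing $\Gc_{f,q}$ by $\alpha_q\otimes\Gc_{f,q}$ — which only multiplies the trace function by $\alpha_q$, as in Section \ref{subsec:examplesTwisting}, and does not change $G_\geom$ — makes the determinant arithmetically trivial, so $G_\arith\subseteq\SL_{k_f}(\C)$, and together with $G_\geom^{0,\der}=\SL_{k_f}(\C)$ this forces $G_\arith=G_\geom=\SL_{k_f}(\C)$.

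Finally, for the independence of shifts, suppose there were a geometric isomorphism $[+a]^*\Gc_{f,q}\cong\Gc_{f,q}\otimes\Lc$ or $[+a]^*\Gc_{f,q}\cong D(\Gc_{f,q})\otimes\Lc$ with $a\in\G_m(\F_q)$ and $\Lc$ of rank $1$. Since $\Gc_{f,q}$ and $D(\Gc_{f,q})$ are lisse on $\G_m$, the sheaf $[+a]^*\Gc_{f,q}$ is lisse at $0$ (because $a\neq0$), so the right-hand side would have to be lisse at $0$ too; but at $0$ the sheaf $\Gc_{f,q}$ (and $D(\Gc_{f,q})$) has tame local monodromy equal to a direct sum of $k_f\geq2$ pairwise distinct characters of the tame quotient of $I_0$, and tensoring such a representation by the $1$-dimensional local representation of $\Lc$ at $0$ is never trivial (if $\Lc$ is tame at $0$ the characters $\mu_j\nu$ remain pairwise distinct; if $\Lc$ is wild at $0$ the Swan conductor becomes positive). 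This contradiction excludes all such isomorphisms with $a\neq0$; note that Lemma \ref{lemma:translIsomnd}, tailored to cases (ii)--(iii), does not apply here since $\Fc_1$ is trivial, so one argues directly from the tame structure at $0$ instead of reducing to an Artin--Schreier $\Lc$ and applying Lemma \ref{lemma:translIsomExplicit}. I expect the real content of the proposition to lie in the monodromy and determinant steps — extracting from \cite[Chapter 7]{KatzESDE} the precise form of the hypotheses $(H)$, $(H')$ and of the twisting Weil number, and checking that the critical values and their differences satisfy the genericity implicit in those hypotheses so that Katz's classification yields the full classical group — rather than in the conductor bound or the independence of shifts, which follow fairly directly from the structural facts above and the $\ell$-adic formalism.
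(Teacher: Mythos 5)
Your construction, conductor bound, and monodromy identification follow the same route as the paper: everything is delegated to Katz \cite[7.9--7.10]{KatzESDE}, the geometric triviality of $\det\Gc_{f,q}$ under $(H)$ is read off from its local components (the paper quotes Katz's formula $\det\Gc_{f,q}\cong\Lc_{\psi(-\beta X)}\otimes\Lc_{\chi_2^{k_f}}$, which is exactly your ``product of the tame characters at $0$'' and ``frequency $\beta$ at $\infty$'' computation), and the twist $\alpha_q$ is produced by the recipe of Section \ref{subsec:arithmGeomMono}. So far this is fine.

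The divergence, and the gap, is in the independence of shifts. You argue at the point $0$: the left-hand side of \eqref{eq:translatinIsom} is lisse at $0$, while $\Gc_{f,q}|_{I_0}$ is a sum of $k_f\ge 2$ pairwise distinct tame characters, so tensoring with a rank-one local representation can never kill it. The paper instead argues at $\infty$, using Katz's description $\Gc_{f,q}|_{I_\infty}\cong\bigoplus_{z\in Z_{f'}}\Lc_{\psi(f(z)X)}\otimes(\text{tame})$, the break-depression lemma to reduce $\Lc|_{I_\infty}$ to $(\text{tame})\otimes\Lc_{\psi(bX)}$, and a comparison of the $P_\infty$-frequencies $\{f(z)\}$ and $\{f(z)+b\}$ (plus a direct trace-function evaluation at $x=-a$ in the subcase where $\Lc$ is unramified at $\infty$). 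Your variant is a legitimate and arguably shorter alternative \emph{when $f$ is a polynomial with $f'$ separable}, since then Laumon's stationary phase does give $\Gc_{f,q}|_{I_0}$ as the $d-1$ nontrivial characters killed by $d=\deg f$. But the proposition is stated for $f\in\Q(X)$ a rational function, and Example \ref{ex:FouvryMichel}\ref{item:FouvryMichel1} with $r<0$ (i.e.\ $f=aX^{r+1}+bX$ having a pole at $0$) is genuinely non-polynomial. For such $f$ your description of the local monodromy at $0$ -- ``the regular representation of the cyclic monodromy of $f_*\overline\Q_\ell$ at $\infty$ minus the trivial character'' -- is simply not available: $0$ may be a pole of $f$, the relevant local Fourier transforms involve the singularities of $f_*\overline\Q_\ell$ in $\A^1$ as well, and the $I_0$-structure can contain unipotent blocks or otherwise fail to be a sum of distinct tame characters. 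What your argument really needs is that $\Gc_{f,q}|_{I_0}$ is nontrivial and not $\nu$-isotypic for any rank-one $\nu$, and you have not established this in the generality of the statement. The same caveat applies to your determinant computation at $0$. The paper's argument at $\infty$ is insensitive to this, because Katz's break decomposition at $\infty$ (all breaks $1$, frequencies the critical values $f(z)$) holds uniformly for all $f$ covered by the hypotheses. To repair your proof you would either have to restrict to polynomial $f$, or supply the local analysis at $0$ for rational $f$, or switch to the $\infty$-based argument.
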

\begin{proof}
  The construction of $\Gc_{f,q}$ is done in \cite[Theorem 7.9.4, Lemmas 7.10.2.1, 7.10.2.3]{KatzESDE} and the computation of monodromy groups in \cite[7.9.6, 7.9.7, 7.10]{KatzESDE}.

By Section \ref{subsec:arithmGeomMono}, we get $G_\geom=G_\arith=\Sp_{k_f}(\C)$ in the $(H')$ case. In the $(H)$ case, we use the determination (geometrically) of the determinant of $\Gc_{f,q}$ from \cite[7.10.4]{KatzESDE}: there is a geometric isomorphism
  \[\det(\Gc_{f,q})\cong \Lc_{\psi(-\beta X)}\otimes \Lc_{\chi},\]
  where $\chi=\chi_2^{k_f}$ for $\chi_2$ the character of order $2$ of $\closure{\F}_q^\times$ and $\beta$ is viewed in $\overline\F_q$. Under $(H)$ or $(H')$, this sheaf is geometrically trivial, and it suffices to apply \cite[Proposition 3.2.3]{FKMCours}.\\
  
  It remains to show the independence of shifts.  We consider the case of a geometric isomorphism
  \begin{equation}
    \label{eq:GcfTranslationInvariance}
    [+a]^*\Gc_{f,q}\cong\Gc_{f,q}\otimes\Lc
  \end{equation}
  for $\Lc$ a rank 1 sheaf and $a\in\F_q$, the argument with $D(\Gc_{f,q})$ being similar. We adapt the multiplicative case treated in the proof of \cite[Théorème 2.3]{MichelMinorationsSommesExp}. By Lemma \ref{lemma:translationInvariance} \ref{item:translationInvariance:SingL}, since $\Gc_{f,q}$ is lisse on $\G_m$, we must have $\Sing(\Lc)=\{0,-a,\infty\}$ or $\{0,-a\}$. Moreover, by \cite[7.5.4(5)]{KatzESDE}, the ramification of $\Lc$ at $0$ and $-a$ is tame. By \cite[7.9.4]{KatzESDE}, $\Gc_{f,q}$ as $I_\infty$-representation is
\[\Gc_{f,q}(\infty)\cong\bigoplus_{z\in Z_{f'}} \left(\Lc_{\psi(f(z)X)}\otimes \Lc_{\overline{\chi}_{z}(X)}\right)\]
where $\chi_{z}$ is a multiplicative character, and we view $Z_{f'}$ in $\overline\F_q$. Hence, by \cite[Lemma 1.3]{KatzGKM} all the breaks are at $1$ and as representations of the wild inertia group $P_\infty$, we have
\[\Gc_{f,q}(\infty)\cong\bigoplus_{z\in Z_{f'}} \Lc_{\psi(f(z)X)}.\]

We distinguish two cases:
  \begin{itemize}[itemsep=0.2cm]
  \item If $\infty\not\in\Sing(\Lc)$, then Lemma \ref{lemma:FKMClass} \ref{item:FKMClass3} implies that there is a multiplicative character $\chi_1$ such that
\[\Lc\cong\Lc_{\chi_1((X+a)/X)}.\]
Hence, there exists some $\beta\in\C$ of unit norm such that 
\[\beta\sum_{y\in\F_q} e\left(\frac{\tr((x+a)f(y))}{p}\right)=\sum_{y\in\F_q}e\left(\frac{\tr(xf(y))}{p}\right)\chi_1\left(\frac{x+a}{x}\right)\]
for all $x\in\F_q^\times$. If $a\neq 0$, taking $x=-a$ gives $\beta q=0$, a contradiction.
\item Assume that $\infty\in\Sing(\Lc)$. By \cite[Lemma 1.3]{KatzGKM}, $\Swan_\infty(\Lc)\in\{0,1\}$ because all the breaks of $\Gc_{f,q}$ at $\infty$ are at $1$. If $\Swan_\infty(\Lc)=1$, the break-depression lemma \cite[8.5.7]{KatzGKM} implies that $\Lc\cong(\text{tame at }\infty)\otimes\Lc_{\psi(bX)}$ for some $b\in\F_q^\times$. On the other hand, $\Lc$ is by definition tame at $\infty$ if $\Swan_\infty(\Lc)=0$. In both cases, the restriction of the isomorphism \eqref{eq:GcfTranslationInvariance} to $P_\infty$ gives
\[\bigoplus_{z\in Z_{f'}}\Lc_{\psi(f(z)(X+a))}\cong\bigoplus_{z\in Z_{f'}}\Lc_{\psi((f(z)+b)X)}\]
for some $b\in\F_q$. Thus the sets $\{f(z)(X+a) : z\in Z_{f'}\}$ and $\{(f(z)+b)X : z\in Z_{f'}\}$ are equal, which implies that $a=0$ (and $b=0)$.
  \end{itemize}
\end{proof}

\begin{remark}
  Lemma \ref{lemma:translIsomnd} does not apply here because $\Fc_1$ is trivial.
\end{remark}

\begin{examples}\label{ex:FouvryMichel}
  The following examples are given in \cite[p. 229]{MichelMinorationsSommesExp}, \cite[p. 7]{FouvryMichelSommes} and \cite[7.10]{KatzESDE}:
  \begin{enumerate}
  \item\label{item:FouvryMichel1} The polynomial $f=aX^{r+1}+bX$ with $a,b,r\in\Z$ and $ab\neq 0$ verifies $k_f=|r|$ and
    \[
    \begin{cases}
      (H)&\text{if }|r|\ge 3\text{ odd},\\
      (H')&\text{if }r\neq 0\text{ even.}
    \end{cases}
\]
  \item Let $g\in\Z[X]$ be monic of degree $r$ with full Galois group $\Sf_r$ (a ``generic'' condition by \cite{vdW34}), and let $f\in\Q[X]$ be the unique primitive of $g$ with $\sum_{i=1}^r f(\alpha_i)=0$, where $\alpha_1,\dots,\alpha_r$ are the zeros of $f$. Assuming that $r\ge 6$ is even, we have that $(H)$ holds for $f$ and $k_f=n$.
  \item For $n\ge 3$, the polynomial $f=X^n-naX$ satisfies $(H')$, and $k_f=n-1$.
  \end{enumerate}
\end{examples}

\subsubsection{Sums of the form \eqref{eq:expsumGen} with $f=X$, $\chi=1$, $h$ polynomial}

  \begin{proposition}\label{prop:expsumGenfXchi1hpol}
  Let $h=\sum_{i=0} a_nX^n\in\Z[X]$ be a polynomial of degree $n\ge 3$. For $p$ large enough (depending on $h$), there exists an $\ell$-adic sheaf $\Gc_{h,q}$ over $\F_q$ of rank $n-1$ corresponding to the trace function
  \[x\mapsto\frac{-1}{\sqrt{q}}\sum_{y\in\F_q} e\left(\frac{\tr(xy+h(y))}{p}\right) \ (x\in\F_q).\]
  If $a_{n-1}=0$ and $n\not\in\{7,9\}$, there exist Weil numbers $\alpha_q\in\overline\Q_\ell$ of weight $0$ such that the family $(\alpha_q\otimes\Gc_{h,q})_q$ is coherent, with monodromy group
  \begin{enumerate}
\item $\SL_{n-1}(\C)$ if $n-1$ is odd,
\item If $n$ is odd:
  \begin{itemize}
  \item $\Sp_{n-1}(\C)$ if $h$ has no monomial of even positive degree,
  \item $\SL_{n-1}(\C)$ otherwise.
  \end{itemize}
\end{enumerate}
 Moreover, $\alpha_q=1$ in the symplectic case.
\end{proposition}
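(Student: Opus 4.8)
The plan is to follow the template used for the preceding families, taking the construction and monodromy input from Katz. First I would specialize Proposition~\ref{prop:genExpSum} to $f = X$, $\chi = 1$, $g = 1$, so that $\Fc_2 = \Fc_1 = \Lc_{\psi(h)}$ is the Artin--Schreier sheaf attached to $h$; this is a Fourier sheaf since it is lisse on $\A^1$ and, as $\deg h \ge 2$, has no subsheaf or quotient of the form $\Lc_{\psi(aX)}$. Hence the Fourier transform $\Gc_{h,q} = \FT_\psi(\Lc_{\psi(h)})$ exists, has rank $n-1$, the stated trace function, and conductor bounded independently of $q$ by Proposition~\ref{prop:genExpSum}. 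The geometric monodromy group is computed by Katz in \cite[7.12]{KatzESDE}: one finds $G_\geom^0 = G_\geom^{0,\der}$ equal to $\SL_{n-1}(\C)$ or $\Sp_{n-1}(\C)$ according to the dichotomy in the statement (no orthogonal group occurs, consistently with the Fourier transform of an Artin--Schreier sheaf being at most symplectically self-dual), apart from the finitely many exceptional low-rank cases that account for the exclusion $n \notin \{7,9\}$. It then remains to make the arithmetic and geometric monodromy groups coincide, and to prove the independence of shifts.

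For the monodromy groups I would invoke the recipes of Section~\ref{subsec:arithmGeomMono}. In the case $n$ odd with $h$ having no monomial of even positive degree --- equivalently $h$ odd up to an additive constant --- Katz's proof that $G_\geom^0 = \Sp_{n-1}(\C)$ in fact exhibits $\Gc_{h,q}$ as symplectically self-dual, so $G_\arith \subseteq \Sp_{n-1}(\C)$ and hence $G_\geom = G_\arith = \Sp_{n-1}(\C)$ with $\alpha_q = 1$. In the special linear case I would use Katz's determination of the geometric determinant of $\Gc_{h,q}$: since $\Gc_{h,q}$ is lisse on $\A^1$ and $\gcd(n,n-1) = 1$, the unique break of $\det\Gc_{h,q}$ at $\infty$ is $< 2$, so this rank-$1$ sheaf is geometrically either trivial or an Artin--Schreier sheaf $\Lc_{\psi(\lambda X)}$, and the hypothesis $a_{n-1}=0$ is precisely what forces $\lambda = 0$. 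Then $\det\Gc_{h,q} \cong \alpha_q\otimes\overline\Q_\ell$ arithmetically for a Weil number $\alpha_q$ of weight $0$, and twisting by $\alpha_q^{-1/(n-1)}$ puts the arithmetic monodromy inside $\SL_{n-1}(\C)$; since $G_\geom^{0,\der} = \SL_{n-1}(\C)$ this gives $G_\geom = G_\arith = \SL_{n-1}(\C)$.

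For the independence of shifts I would chain Lemmas~\ref{lemma:translIsomnd} and~\ref{lemma:translIsomExplicit}. With $\Fc_2 = \Lc_{\psi(h)}$ and $f = X$ of degree $1$, we have $\Swan_\infty(\Fc_1) = \deg h = n \ge 3 > 1$ and $\gcd(n,1) = \gcd(1,p) = 1$, so Lemma~\ref{lemma:translIsomnd} applies; as $n > 2$, any geometric isomorphism of the shape \eqref{eq:translatinIsom} for $\Gc_{h,q}$ with $a \ne 0$ must have $\Lc \cong \Lc_{\psi_1}$ for some additive character $\psi_1$. Since $\Lc_{\psi(h)}$ is a Fourier sheaf that is geometrically irreducible and nontrivial, Lemma~\ref{lemma:translIsomExplicit} then applies, and with $f = X$ and $\chi = 1$ its second conclusion forces $h$ to be a polynomial of degree at most $2$, contradicting $\deg h = n \ge 3$. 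Hence no such isomorphism exists, the $D(\Gc_{h,q})$-variant being handled in the same way.

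I expect the genuine obstacle to lie in the determinant computation: one must pin down Katz's formula for the geometric determinant of $\FT_\psi(\Lc_{\psi(h)})$ precisely enough to see that the coefficient $a_{n-1}$ governs its wild part at $\infty$ and that $a_{n-1} = 0$ trivializes it geometrically. The construction, the conductor bound and the monodromy classification are direct citations to Katz, and the independence of shifts is a mechanical application of the already-established Lemmas~\ref{lemma:translIsomnd} and~\ref{lemma:translIsomExplicit}.
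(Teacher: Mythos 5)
Your proposal is correct and follows essentially the same route as the paper: construction and monodromy via Katz \cite[7.12]{KatzESDE} and \cite{KatzMonodromyFamES}, equality of arithmetic and geometric monodromy via the recipes of Section~\ref{subsec:arithmGeomMono} (self-duality in the symplectic case, triviality of the geometric determinant from $a_{n-1}=0$ in the special linear case), and independence of shifts by chaining Lemmas~\ref{lemma:translIsomnd} and~\ref{lemma:translIsomExplicit} with $f=X$, $\chi=1$. Your write-up in fact supplies more detail than the paper's terse proof, and the details (the Fourier-sheaf check, the Swan-conductor bound on the determinant, the degree contradiction $\deg h\le 2$ versus $n\ge 3$) are all sound.
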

\begin{proof}
  The construction of $\Gc_{h,q}$ and the computation of its geometric monodromy group can be found in \cite[7.12]{KatzESDE} and \cite{KatzMonodromyFamES}. In the symplectic case, Section \ref{subsec:arithmGeomMono} gives that $G_\geom=G_\arith=\Sp_{n-1}(\C)$. In the special linear case, the hypothesis $a_{n-1}=0$ implies that the geometric determinant of $\Gc$ is trivial by \cite[Section 7.12]{KatzESDE}, and the statement follows from Section \ref{subsec:arithmGeomMono}.
  
  The independence of shifts follows directly from Lemmas \ref{lemma:translIsomnd} and \ref{lemma:translIsomExplicit}, similarly to Kloosterman sheaves.
\end{proof}
\begin{example}
  For the Birch sums \eqref{eq:BS}, we have $h=X^3$ and the corresponding monodromy group is $\Sp_2(\C)=\SL_2(\C)$.
\end{example}

\subsubsection{Sums of the form \eqref{eq:expsumGen} with $f$ polynomial, $\chi\neq 1$}

  \begin{proposition}\label{prop:expSumfpolchineq1}
  Let
  \begin{itemize}
  \item $h\in\Q(X)$ with a pole of order $n\ge 1$ at $\infty$.
  \item $f\in\Z[X]$ nonzero of degree $d$ with $(d,n)=1$.
  \item $g\in\Q(X)$ nonzero.
  \item $\chi$ a character of $\F_q^\times$ of order $r\ge 2$, with the order of any zero or pole of $g$ not divisible by $r$.
  \end{itemize}
  For $p$ large enough (depending on $f,g,h$), there exists an $\ell$-adic sheaf $\Gc_q$ over $\F_q$ corresponding to the trace function \eqref{eq:expsumGen}. Assuming that $n>2d$, that $f,h$ are odd and that
 \begin{itemize}
 \item there exists $L\in\Q(X)$ even or odd with $L(x)^r=g(x)g(-x)$, 
 \item either $g$ is nonconstant or $h\not\in\Z[X]$,
 \item either $N=\rank(\Gc)\neq 8$ or $n-d\neq 6$,
 \end{itemize}
then there exist $\alpha_q\in\{\pm 1\}$ such that the family $(\alpha_q\otimes\Gc_q)_q$ is coherent, with monodromy group $\Sp_N(\C)$ if $L$ is odd (in which case $\alpha_q=1$) and $\SO_N(\C)$ if $L$ is even.
\end{proposition}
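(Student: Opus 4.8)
The plan is to verify the three conditions defining a coherent family in the classical case of Definition \ref{def:coherentFamily}; the construction of $\Gc_q$, its geometric irreducibility, and the shape of its geometric monodromy group are essentially already contained in Katz's work. First I would invoke Proposition \ref{prop:genExpSum}, which produces the sheaf $\Gc_q=\FT_\psi(\Fc_2)$ with $\Fc_2=f_*\Fc_1$ and $\Fc_1=\Lc_{\psi(h)}\otimes\Lc_{\chi(g)}$, exhibits its trace function as \eqref{eq:expsumGen}, and bounds $\cond(\Gc_q)$ uniformly in $q$. Geometric irreducibility of $\Gc_q$ follows from that of $\Fc_1$ together with the fact that the $\ell$-adic Fourier transform preserves geometric irreducibility of Fourier sheaves.

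Next I would extract the geometric monodromy group from \cite[7.7, 7.13, 7.14]{KatzESDE}. The oddness of $f$ and $h$ and the relation $g(x)g(-x)=L(x)^r$ with $L$ even or odd make $\Fc_1$ — hence $\Fc_2$ and $\Gc_q$ — geometrically self-dual, symplectically when $L$ is odd and orthogonally when $L$ is even. Together with the local behaviour at infinity (by \cite[7.7]{KatzESDE}, $\Fc_2$ has unique break $n/d$ at $\infty$, so by \cite[7.4.1]{KatzESDE} the sheaf $\Gc_q$ has unique break $n/(n-d)$ at $\infty$ and is totally wildly ramified there), Katz's classification in \cite[7.13, 7.14]{KatzESDE} forces $G_\geom(\Gc_q)=G_\geom^0(\Gc_q)$ to equal $\Sp_N(\C)$ (if $L$ is odd) or $\SO_N(\C)$ (if $L$ is even), where $N=\rank(\Gc_q)$; the low-rank hypotheses ($N\neq 8$ or $n-d\neq 6$) are precisely what excludes the exceptional possibilities in that classification and, in the orthogonal case, keeps $\SO_N(\C)$ within the list allowed by Definition \ref{def:coherentFamily}. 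To upgrade this to an equality of arithmetic and geometric monodromy groups I would follow Section \ref{subsec:arithmGeomMono}: when $L$ is odd Katz's method already exhibits an arithmetic symplectic autoduality, so $G_\arith(\Gc_q)\subset\Sp_N(\C)$ and $G_\geom=G_\arith=\Sp_N(\C)$ with $\alpha_q=1$; when $L$ is even one obtains only $G_\arith(\Gc_q)\subset\O_N(\C)$, and choosing $\alpha_q\in\{\pm1\}$ so that $\alpha_q\otimes\Gc_q$ has trivial arithmetic determinant yields $G_\geom(\alpha_q\otimes\Gc_q)=G_\arith(\alpha_q\otimes\Gc_q)=\SO_N(\C)$.

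For the independence of shifts I would proceed exactly as for Kloosterman sheaves, via Lemmas \ref{lemma:translIsomnd} and \ref{lemma:translIsomExplicit}. Since $\Lc_{\chi(g)}$ is everywhere tame, $\Swan_\infty(\Fc_1)=n$, the order of the pole of $h$ at $\infty$; as $n>2d$ and, for $p$ large, $(n,d)=(d,p)=1$, Lemma \ref{lemma:translIsomnd} shows that in any geometric isomorphism of the form \eqref{eq:translatinIsom} for $\Gc_q$ with $a\neq 0$ the auxiliary sheaf $\Lc$ must be an Artin-Schreier sheaf $\Lc_{\psi_1}$. Lemma \ref{lemma:translIsomExplicit} then leaves only the possibilities $\Sing(\Fc_2)=\{\infty\}$ or $\A^1(\F_q)\subset\Sing(\Fc_2)$, together with rigid constraints on $h$ and $g$; the second alternative is impossible once $q$ exceeds the uniform bound on $|\Sing(\Fc_2)|$, and the remaining constraints are ruled out by the hypothesis that $g$ is nonconstant or $h\notin\Z[X]$, since in either of those situations $\Fc_1$ — and hence $\Fc_2=f_*\Fc_1$ — acquires a finite singularity. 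Running the same argument with $D(\Gc_q)$ in place of $\Gc_q$ disposes of the second type of isomorphism in \eqref{eq:translatinIsom}, so $(\alpha_q\otimes\Gc_q)_q$ is coherent. I expect the main obstacle to lie in the monodromy computation: locating in Chapter 7 of \cite{KatzESDE} the exact hypotheses under which the self-dual sheaf $\Gc_q$ has full classical monodromy, tracking the sign of the autoduality (which separates the symplectic from the orthogonal regime) and the finitely many exceptional ranks that must be excluded; by contrast the conductor bound and the shift-independence step are routine once the lemmas above are in place.
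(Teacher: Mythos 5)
Your proposal follows essentially the same route as the paper: Proposition \ref{prop:genExpSum} and Katz's 7.7/7.13/7.14 for the construction, conductor bound and geometric monodromy, Section \ref{subsec:arithmGeomMono} for upgrading to $G_\geom=G_\arith$ via the sign $\alpha_q$, and Lemmas \ref{lemma:translIsomnd} and \ref{lemma:translIsomExplicit} combined with the hypothesis that $g$ is nonconstant or $h\not\in\Z[X]$ (which forces a finite singularity of $\Fc_2$, ruling out $\Sing(\Fc_2)=\{\infty\}$) for the independence of shifts. The only small imprecision is your claim that geometric irreducibility of $\Gc_q$ follows from that of $\Fc_1$: one actually needs irreducibility of $\Fc_2=f_*\Fc_1$, which is not a formal consequence of irreducibility of $\Fc_1$ but is part of what is taken from Katz's analysis.
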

\begin{proof}
  The construction and the computation of the geometric monodromy group of $\Gc_q$ can be found in \cite[7.7, 7.13 ($\Sp$-example(2)) and 7.14 ($\O$-example(2))]{KatzESDE}. Section \ref{subsec:arithmGeomMono} show the existence of $\alpha_q\in\{\pm 1\}$ so that $G_\geom(\alpha_q\otimes\Gc_q)=G_\arith(\alpha_q\otimes\Gc_q)$ is as stated.

  We show the independence of shifts. Let us assume that there is a geometric isomorphism of the form \eqref{eq:translatinIsom} for $\Gc$ with $a\neq 0$. By Lemmas \ref{lemma:translIsomnd} and \ref{lemma:translIsomExplicit}, we have $\Sing(\Fc_2)=\{\infty\}$ or $\A^1(\F_q)\subset\Sing(\Fc_2)$. Since $\cond(\Fc_2)$ is bounded independently from $q$, the last possibility is excluded for $q$ large enough. Let us then assume that $\Sing(\Fc_2)=\{\infty\}$. Because $f$ is a polynomial, we have $\Sing(\Fc_1)\subset\{\infty\}$. Since the Kummer sheaf is tamely ramified everywhere while the Artin-Schreier sheaf is totally wild at all ramified points, this implies that $h\in\Z[X]$ and that $g$ is constant.
\end{proof}

\subsection{Families of hyperelliptic curves}
\begin{proposition}\label{prop:hyperEllipticFamily}
  Let $f\in\Z[X]$ be a squarefree polynomial of degree $2g\ge 2$. For $q$ large enough, we consider the family of smooth projective models of the affine hyperelliptic curves over $\F_q$ of genus $g$ given by
  \[X_z : y^2=f(x)(x-z),\]
  parametrized by $z\in\F_q$, which are nonsingular when $z\not\in Z_{f,q}$, for $Z_{f,q}\subset\overline\F_q$ the set of zeros of $f$ in $\F_q$. There exists a geometrically irreducible $\ell$-adic sheaf $\Fc_{f,q}$ over $\F_q$ of rank $2g$, with trace function
  \[t_\Fc(z)=\frac{q+1-|X_z(\F_{q})|}{q^{1/2}} \ (z\not\in Z_{f,q}).\]
  The family $(\Fc_{f,q})_q$ is coherent with monodromy group $\Sp_{2g}(\C)$.
\end{proposition}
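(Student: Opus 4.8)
The plan is to realise $\Fc_{f,q}$ as the middle extension of the $\ell$-adic local system of first cohomology groups of the family $z\mapsto X_z$, and then to verify the three conditions of the classical case of Definition \ref{def:coherentFamily}: uniform boundedness of the conductor, the equality $G_\geom(\Fc_{f,q})=G_\arith(\Fc_{f,q})=\Sp_{2g}(\C)$, and the independence of shifts.

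First I would construct the sheaf. Let $U=\P^1-(Z_{f,q}\cup\{\infty\})$ and let $\pi$ be the smooth proper morphism onto $U$ whose fibre over $z\in U$ is the genus-$g$ curve $X_z$. Put $\Fc_{f,q}=j_*\big(R^1\pi_*\overline\Q_\ell\big)(1/2)$, where $j:U\hookrightarrow\P^1$ is the inclusion; this is a middle-extension sheaf, lisse on $U$ of rank $\dim H^1(X_{z,\overline\F_q},\overline\Q_\ell)=2g$, and pointwise pure of weight $0$ by Deligne \cite{Del2}. Applying the Grothendieck--Lefschetz trace formula to each fibre gives $\tr\big(\Frob_z\mid H^1(X_{z,\overline\F_q},\overline\Q_\ell)\big)=q+1-|X_z(\F_q)|$ for $z\notin Z_{f,q}$, which after the half Tate twist is exactly the asserted trace function. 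This construction, and the monodromy computation invoked below, are carried out in \cite[Chapter 10]{KatzSarnak91}.

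Next come the monodromy and the conductor. As $z$ varies over $U$, the curve $X_z$ acquires a single non-separating node precisely as $z$ tends to a zero of $f$, so by the Picard--Lefschetz formula the local monodromy of $\Fc_{f,q}$ at each point of $Z_{f,q}$ is a symplectic transvection; these transvections generate $G_\geom(\Fc_{f,q})=\Sp_{2g}(\C)$, and in particular $\Fc_{f,q}$ is geometrically irreducible (the standard representation of $\Sp_{2g}$ being irreducible). The cup-product pairing on the $H^1$ of a curve is non-degenerate and alternating, and is functorial, hence $\Gal(\overline\F_q/\F_q)$-equivariant; together with the half Tate twist it exhibits $\Fc_{f,q}$ as symplectically self-dual, so $G_\arith(\Fc_{f,q})\subseteq\Sp_{2g}(\C)$ and therefore $G_\geom(\Fc_{f,q})=G_\arith(\Fc_{f,q})=\Sp_{2g}(\C)$, exactly as in the symplectic case of Section \ref{subsec:arithmGeomMono}. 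For the conductor, $\Sing(\Fc_{f,q})\subseteq Z_{f,q}\cup\{\infty\}$, so $|\Sing(\Fc_{f,q})|\le 2g+1$; the local monodromy at each finite singularity is unipotent, hence tame, and for $q$ large the local monodromy at $\infty$ is tame as well, so all Swan conductors vanish and $\cond(\Fc_{f,q})\le 2g+(2g+1)=4g+1$, independently of $q$.

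The remaining condition is the independence of shifts, which I expect to be the only point requiring real care, though it follows cleanly from the transvection structure above. Since $\Fc_{f,q}$ is self-dual, $D(\Fc_{f,q})\cong\Fc_{f,q}$ geometrically, so both alternatives in \eqref{eq:translatinIsom} reduce to a geometric isomorphism $[+a]^*\Fc_{f,q}\cong\Fc_{f,q}\otimes\Lc$ with $\Lc$ of rank $1$ and $a\in\F_q^\times$; suppose such an isomorphism exists. For $q$ large we have $p>2g=|Z_{f,q}|$, so starting from any $z_0\in Z_{f,q}$ the chain $z_0,z_0-a,z_0-2a,\dots$, whose terms run through the $p$ distinct elements of $z_0+\langle a\rangle$, must leave $Z_{f,q}$ after at most $2g$ steps; let $x\in Z_{f,q}$ be the last term before it does, so that $x-a\notin Z_{f,q}\cup\{\infty\}=\Sing(\Fc_{f,q})$ and hence, by Lemma \ref{lemma:translationInvariance}\ref{item:translationInvariance:SingL}, $x-a\in\big(\Sing(\Fc_{f,q})-a\big)\setminus\Sing(\Fc_{f,q})\subseteq\Sing(\Lc)$. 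Since $\Fc_{f,q}$ is lisse at $x-a$ while $\Lc^{I_{x-a}}=0$, we obtain
\[
\Fc_{f,q}^{I_x}\cong\big([+a]^*\Fc_{f,q}\big)^{I_{x-a}}\cong\big(\Fc_{f,q}\otimes\Lc\big)^{I_{x-a}}=\Fc_{f,q}\otimes\Lc^{I_{x-a}}=0,
\]
which contradicts the fact that the local monodromy of $\Fc_{f,q}$ at $x\in Z_{f,q}$ is a transvection, so that $\dim\Fc_{f,q}^{I_x}=2g-1>0$. Hence no such isomorphism exists with $a\neq0$, and $(\Fc_{f,q})_q$ is a coherent family in the classical symplectic case with $G=\Sp_{2g}(\C)$.
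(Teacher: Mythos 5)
Your proposal is correct and follows essentially the same route as the paper: construction and full symplectic monodromy from \cite[Chapter 10]{KatzSarnak91}, self-duality of the normalized sheaf to get $G_\arith=G_\geom=\Sp_{2g}(\C)$, tameness for the conductor bound, and a contradiction between vanishing of inertia invariants forced by \eqref{eq:translatinIsom} and the transvection structure at the finite singularities. Your inline chain argument producing an $x\in\Sing(\Fc_{f,q})\cap\A^1(\F_q)$ with $\Fc_{f,q}^{I_x}=0$ is exactly the content of Lemma \ref{lemma:translationInvariance}\ref{item:translationInvariance:Invariants}, which the paper simply cites at that point.
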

\begin{proof}
  For the construction, see \cite[Section 10.1]{KatzSarnak91} or \cite[Section 4]{Hall08} (using middle-convolutions). Here, we moreover normalize with a Tate twist to get a sheaf of weight $0$. We have $\Sing(\Fc_{f,q})=\{\infty\}\cup Z_f$ and $\Fc_{f,q}$ is everywhere tame. In particular, $\cond(\Fc_{f,q})=2g+|Z_{f,q}|$.

  By \cite[Theorem 10.1.16]{KatzSarnak91}, the geometric monodromy group is symplectic. Since we normalized, \cite[Lemma 10.1.9]{KatzSarnak91} shows that the arithmetic monodromy group preserves the sames pairing (without normalization, it is a symplectic similitude with multiplicator $q$).

  It remains to show the independence of shifts. By \cite[10.1.13]{KatzSarnak91}, at any $z\in Z_{f,q}$ the quotient $V/V^{I_z}$ is the trivial (one-dimensional) $I_z$--representation, for $V=(\Fc_{f,q})_{\overline\eta}$. Let us assume that there exists an isomorphism of the form \eqref{eq:translatinIsom} for $\Fc_{f,q}$. By Lemma \ref{lemma:translationInvariance} \ref{item:translationInvariance:Invariants}, if $q$ is large enough, there exists $x\in\Sing(\Fc)\cap\A^1(\F_q)$ such that $V^{I_x}=0$, a contradiction.
\end{proof}

\bibliographystyle{alpha}
\bibliography{references}

\end{document}